\newtheorem{theorem}{Theorem}
\newtheorem*{kcaserepeat}{Theorem 5}
\newtheorem*{2caserepeat}{Theorem 4}
\newtheorem{proposition}[theorem]{Proposition}
\newtheorem{conjecture}[theorem]{Conjecture}
\newtheorem{lemma}[theorem]{Lemma}
\newtheorem{corollary}[theorem]{Corollary}
\newtheorem{claim}{Claim}
\theoremstyle{remark}
\begin{document}
\title{Generating all subsets of a finite set with disjoint
unions}
\date{May 2011}

\author{David Ellis\thanks{St John's College, Cambridge, CB2 1TP, UK.},
Benny Sudakov\thanks{Department of Mathematics,
UCLA, Los Angeles, CA 90095, USA. Research supported in part by NSF CAREER award
DMS-0812005 and by a USA-Israeli BSF grant.}}
\maketitle
\begin{abstract}
If \(X\) is an \(n\)-element set, we call a family
\(\mathcal{G} \subset \mathcal{P}X\) a \(k\)-\textit{generator for} \(X\) if
every \(x \subset X\) can be expressed as a union of at most
\(k\) disjoint sets in \(\mathcal{G}\). Frein, L\'ev\^eque and Seb\H o
\cite{Leveque} conjectured that for \(n > 2k\), the smallest \(k\)-generators
for \(X\) are obtained by taking a partition of \(X\)
into classes of sizes as equal as possible, and taking the union of the
power-sets of the classes. We prove this conjecture for all sufficiently large
\(n\) when \(k = 2\), and for \(n\) a sufficiently
large multiple of \(k\) when \(k \geq 3\).
\end{abstract}

\textit{Keywords:} generator, disjoint unions.

\textit{2000 MSC:} 05Dxx.

\section{Introduction}
Let \(X\) be an \(n\)-element set, and let $\mathcal{P}X$ denote the set of all
subsets of $X$. We call a family \(\mathcal{G} \subset \mathcal{P}X\) a
\(k\)-\textit{generator for} \(X\) if every \(x \subset
X\) can be expressed as a union of at most \(k\) disjoint
sets in \(\mathcal{G}\). For example, let \((V_{i})_{i=1}^{k}\) be a partition
of \(X\) into \(k\) classes of sizes as equal as possible; then
\[\mathcal{F}_{n,k} := \bigcup_{i=1}^{k} \mathcal{P}(V_{i}) \setminus
\{\emptyset\}\]
is a \(k\)-generator for \(X\). We call a \(k\)-generator of this form {\em
canonical}. If \(n = qk+r\), where \(0 \leq r <k\), then
\[|\mathcal{F}_{n,k}| = (k-r)(2^q-1)+r(2^{q+1}-1) = (k+r)2^{q}-k.\]

Frein, L\'ev\^eque and Seb\H o \cite{Leveque} conjectured that for any \(k \leq
n\), this is the smallest possible size of a \(k\)-generator for \(X\).

\begin{conjecture}[Frein, L\'ev\^eque, Seb\H o]
\label{conj:frein}
If \(X\) is an \(n\)-element set, \(k \leq n\), and \(\mathcal{G} \subset
\mathcal{P}X\) is a \(k\)-generator for \(X\), then \(|\mathcal{G}| \geq
| \mathcal{F}_{n,k}|\). If \(n > 2k\), equality holds only if \(\mathcal{G}\) is
a canonical \(k\)-generator for \(X\).
\end{conjecture}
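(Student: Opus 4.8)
Empty sets never help in a disjoint union, so we may assume $\emptyset\notin\mathcal{G}$; then every singleton $\{v\}$ must itself lie in $\mathcal{G}$. We may also assume $|\mathcal{G}|\le 2|\mathcal{F}_{n,k}|$, since otherwise there is nothing to prove; in particular $|\mathcal{G}|=O(2^{n/k})$. For $x\subseteq X$ let $\pi(x)$ denote the least number of pairwise disjoint members of $\mathcal{G}$ with union $x$, so $\pi(\emptyset)=0$, $\pi(x)=1$ iff $x\in\mathcal{G}$, and $\mathcal{G}$ is a $k$-generator iff $\pi(x)\le k$ for all $x\subseteq X$.

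\textbf{Reduction to counting $k$-matchings.} Call $k$ pairwise disjoint members of $\mathcal{G}$ a \emph{$k$-matching}, and let $\mu(\mathcal{G})$ be their number. Writing $c_j=\#\{x:\pi(x)=j\}$, we have $c_0=1$, $c_1=|\mathcal{G}|$, $\sum_{j=0}^k c_j=2^n$, and $c_j\le\binom{|\mathcal{G}|}{j}$ (a subset with $\pi(x)=j$ picks out a $j$-matching, and distinct subsets pick out distinct matchings). Since $|\mathcal{G}|=O(2^{n/k})$ this gives $\sum_{j<k}c_j=O(|\mathcal{G}|^{\,k-1})=o(2^n)$, hence $c_k=(1-o(1))2^n$, while $c_k\le\mu(\mathcal{G})$. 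So it suffices to prove the sharp bound
\[
\mu(\mathcal{G})\ \le\ (1+o(1))\,\bigl(|\mathcal{G}|/k\bigr)^{k},
\]
equivalently that a uniformly random ordered $k$-tuple of members of $\mathcal{G}$ is pairwise disjoint with probability at most $(1+o(1))\,k!/k^{k}$: granting it, $(1-o(1))2^n\le\mu(\mathcal{G})\le(1+o(1))(|\mathcal{G}|/k)^k$ forces $|\mathcal{G}|\ge(1-o(1))\,k\,2^{n/k}$, which by convexity of $t\mapsto 2^t$ has the same leading term as $|\mathcal{F}_{n,k}|$ exactly when $k\mid n$, and a strictly smaller one otherwise. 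This is why the clean argument yields the conjecture for large multiples of $k$ but not for $k\nmid n$, and why $k=2$ for general large $n$ needs a separate, finer analysis of the whole profile $(c_1,\dots,c_k)$ (where the canonical generating function $\sum_x t^{\pi(x)}$ is the less tractable $(1+(2^{q+1}-1)t)^r(1+(2^q-1)t)^{k-r}$ rather than a perfect $k$-th power).

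\textbf{The main obstacle.} Proving the displayed bound on $\mu(\mathcal{G})$, with a usable error term and an analysis of its near-equality case, is the heart of the matter; it is an AM--GM-type statement, tight precisely when $\mathcal{G}$ decomposes into $k$ equal, mutually orthogonal blocks (the canonical generator when $k\mid n$), and the $k$-generator hypothesis is what rules out the ways one might try to cheat — a family with too many mutually disjoint, or too many small, sets cannot generate the ``spread-out'' subsets. The natural attack is induction, feeding the generator property back in: take a largest $G\in\mathcal{G}$ (so $|G|\ge n/k$), observe that $\{H\in\mathcal{G}:H\subseteq G\}$ is a $k$-generator of $G$ and $\{H\in\mathcal{G}:H\cap G=\emptyset\}$ one of $X\setminus G$, and bound the $k$-matchings meeting $G$ against those avoiding $G$; an alternative framing uses vertex deletion, where the core statement becomes that some star $\mathcal{G}_v=\{G\in\mathcal{G}:v\in G\}$ has size at least $2^{\lceil n/k\rceil-1}$ (which is exactly $|\mathcal{F}_{n,k}|-|\mathcal{F}_{n-1,k}|$).

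\textbf{From the inequality to uniqueness.} For the exact value $|\mathcal{F}_{n,k}|$ and the equality case one upgrades the $o(1)$'s to rigidity in the extremal regime: near-equality in the $k$-matching bound forces $\mathcal{G}$ to be close to a disjoint union $\mathcal{G}_1\sqcup\dots\sqcup\mathcal{G}_k$ of mutually orthogonal blocks with $|\mathcal{G}_i|\approx 2^{n/k}$; the supports $V_i=\bigcup_{G\in\mathcal{G}_i}G$ are then essentially a partition of $X$ into classes of size $\approx n/k$; and one finishes with a stability argument that again exploits the $k$-generator property — on each $V_i$ (every $y\subseteq V_i$ must be $\le k$-generated using only sets contained in $V_i$) and on subsets straddling several $V_i$ — to force $|V_i|=n/k$ for all $i$ and $\mathcal{G}=\bigcup_i(\mathcal{P}(V_i)\setminus\{\emptyset\})$ exactly. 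The hypothesis $n>2k$ keeps the blocks large enough for this rigidity to go through; the small cases $k\le n\le 2k$, where only the inequality is asserted, are checked directly.
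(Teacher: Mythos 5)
Your proposal is an outline rather than a proof: you explicitly defer the two hard steps --- establishing the $k$-matching bound and upgrading it to rigidity --- and these are precisely where the content of the paper lies. The reduction to counting $k$-matchings is sound and is, in disguise, the same first move the paper makes: a $k$-matching in $\mathcal{G}$ is exactly a $K_k$ in the induced subgraph $H[\mathcal{G}]$ of the Kneser graph on $\mathcal{P}X$, and your claimed bound $\mu(\mathcal{G})\le(1+o(1))(|\mathcal{G}|/k)^k$ is the statement that $d_{K_k}(H[\mathcal{G}])\le(1+o(1))\,k!/k^k$. But you leave it unproved. The paper establishes it by showing (Lemmas 2, 3, following Alon--Frankl) that the $K_{k+1}$-density of $H[\mathcal{A}]$ is exponentially small for any $\mathcal{A}$ of size $\Omega(2^{n/k})$ --- via a union bound over the small union $U_i=\bigcup_j A_i^{(j)}$ in a $t$-fold blow-up, transferred back by Jensen --- and then applying a sampling argument plus Erd\H{o}s's theorem on $K_r$-counts in $K_{k+1}$-free graphs (Lemma 10). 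Your suggested ``natural attack'' (peel off a largest $G\in\mathcal{G}$, or bound a vertex-star) is a different route and is not developed far enough to tell whether it closes; it is certainly not what the paper does.

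The uniqueness half is also unproved; ``one finishes with a stability argument'' names the goal, not an argument. The paper needs two non-trivial further pieces. First, a strengthened stability variant of Tur\'an's theorem (Theorem 8): small $K_{k+1}$-density, $K_k$-density near $k!/k^k$, and $K_{k-1}$-density not much above $k!/k^{k-1}$ force the graph to be $k$-partite up to few edge deletions, via a Cauchy--Schwarz analysis of the common-neighbourhood sizes $d(T)$ of $(k-1)$-cliques and a ``good $k$-set'' argument. Combined with Lemma 8 and Shearer's entropy lemma this gives Proposition 6 (near-canonical structure). Second, Proposition 7 is a delicate perturbation argument showing that if any $\mathcal{F}_1\subset\mathcal{P}S_1$ is missing from $\mathcal{G}$, generating the sets $y_1\sqcup s_2\sqcup\cdots\sqcup s_k$ forces at least $(k+1-o(1))|\mathcal{F}_1|$ ``extra'' sets straddling the parts, exceeding the budget. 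None of this appears in your sketch. Finally, as you yourself observe, your plan reaches only $|\mathcal{G}|\ge(1-o(1))\,k\,2^{n/k}$, matching $|\mathcal{F}_{n,k}|$ only when $k\mid n$; the case $k=2$, $n$ odd is a genuinely separate argument in the paper (Section 3, using odd-cycle densities and the Alon--Krivelevich bipartiteness-testing bound), which you only gesture at. For the record, the conjecture as stated is not proved in the paper either: the paper proves it only for $n$ large ($k=2$) or $n$ a large multiple of $k$, so a proof for all $n>2k$ would be a new result beyond the paper.
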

\noindent
They proved this for \(k \leq n \leq 3k\), but their methods do not seem to
work for larger \(n\).

For \(k=2\), Conjecture \ref{conj:frein} is a weakening of a conjecture of
Erd\H os. We call a family \(\mathcal{G} \subset \mathcal{P}X\) a
\(k\)-\textit{base for} \(X\) if every \(x \subset X\) can be
expressed as a union of at most \(k\) (not necessarily disjoint) sets in
\(\mathcal{G}\). Erd\H os (see \cite{furedi}) made the following

\begin{conjecture}[Erd\H os]
\label{conj:erdos}
If \(X\) is an \(n\)-element set, and \(\mathcal{G} \subset \mathcal{P}X\) is a
\(2\)-base for \(X\), then \(|\mathcal{G}| \geq |\mathcal{F}_{n,2}|\).
\end{conjecture}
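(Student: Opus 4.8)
The plan is to prove the lower bound $|\mathcal{G}|\ge|\mathcal{F}_{n,2}|$ for an arbitrary $2$-base $\mathcal{G}$ by combining an elementary structural observation with a counting argument refined enough to recover the exact constant. First I would record the cheap facts: for each $x\in X$ the only way to write $\{x\}$ as a union of at most two members of $\mathcal{G}$ is to have $\{x\}\in\mathcal{G}$, so $\mathcal{G}$ contains all $n$ singletons; and $X$ itself is either a member of $\mathcal{G}$ or a union of two proper subsets from $\mathcal{G}$. These are far too weak on their own, but they locate the problem in the ``large'' sets of $\mathcal{G}$ and they provide a foothold for an induction.

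The crux is the factor-$\sqrt{2}$ gap. The trivial estimate says the $\binom{|\mathcal{G}|}{2}+|\mathcal{G}|+1$ unordered pairs (allowing repetition, and the empty union) from $\mathcal{G}$ must produce all $2^{n}$ subsets of $X$, giving $|\mathcal{G}|\gtrsim\sqrt{2}\cdot 2^{n/2}$, whereas $|\mathcal{F}_{n,2}|\sim 2\cdot 2^{n/2}$. In the canonical example the lost factor is explained by the fact that, for the partition $X=V_{1}\cup V_{2}$, only the cross pairs $A\cup B$ with $A\subseteq V_{1}$, $B\subseteq V_{2}$ generate anything new, so half of all pairs are wasted; thus the real task is to show that no $2$-base does better, i.e.\ that in any $2$-base a constant proportion of the pairs $\{A,B\}$ must be ``redundant'' (pairs with $A\subseteq B$, which only reproduce $B=B\cup\emptyset$, or distinct pairs with the same union). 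I would try to encode this with an entropy/weighting argument: pick $x\subseteq X$ uniformly, let $(A,B)$ be the lexicographically least representation $x=A\cup B$, so that $n=H(x)\le H(A,B)\le 2\log_{2}|\mathcal{G}|$, and aim to save an extra bit by showing that for a constant fraction of $x$ the pair $(A,B)$ carries strictly less information than $2\log_{2}|\mathcal{G}|$, because $A$ and $B$ overlap substantially or because $x\mapsto(A,B)$ is far from hitting all of $\mathcal{G}\times\mathcal{G}$.

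A second, arguably cleaner route is induction on $n$. For a fixed $x\in X$, the subfamily $\mathcal{G}'=\mathcal{G}\cap\mathcal{P}(X\setminus\{x\})$ is itself a $2$-base for $X\setminus\{x\}$ (any $y$ with $x\notin y$ has all its generators inside $X\setminus\{x\}$), while the $2^{n-1}$ sets containing $x$ force $|\mathcal{G}\setminus\mathcal{G}'|$ to be large, roughly $2^{n-1}/|\mathcal{G}|$; feeding $|\mathcal{G}'|\ge|\mathcal{F}_{n-1,2}|$ into this yields a quadratic recursion for $|\mathcal{G}|$ in the spirit of the $n\le 3k$ argument of \cite{Leveque}. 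The difficulty — and the reason this remains a conjecture while the paper settles only the weaker disjoint-union statement — is precisely the recovery of the sharp constant: such a recursion, even iterated, stabilizes at a constant strictly below $2$, and there is no shifting or compression operation that provably pushes an arbitrary $2$-base toward the canonical one without increasing its size, since the extremal $2$-bases are not rigid. So the main obstacle is exactly closing that last factor $\sqrt{2}$, and overcoming it appears to require genuinely new structural input beyond soft counting.
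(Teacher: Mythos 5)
You have correctly recognized something the framing of the exercise obscures: the statement labelled Conjecture~\ref{conj:erdos} is, in fact, still a conjecture, and the paper offers no proof of it. What the paper actually proves (Theorem~\ref{thm:2case}) is the weaker statement Conjecture~\ref{conj:frein} for $k=2$, where the two generating sets are required to be \emph{disjoint}. Disjointness is exactly what makes the Kneser graph $H$ available: a $2$-generator $\mathcal{G}$ of size $m$ induces a subgraph $H[\mathcal{G}]$ with $\gtrsim 2^n$ edges out of $\binom{m}{2}$, so $H[\mathcal{G}]$ has edge-density $\gtrsim 8/9$ (for $n$ odd), and the whole machinery of odd-cycle density, closeness to bipartiteness, and stability can be brought to bear. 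For a $2$-base there is no such graph: a pair $(A,B)$ with $A\cup B=x$ need not be an edge of anything nice, and the structural leverage evaporates. So your proposal, which declines to close the argument and instead diagnoses why it cannot be closed with soft counting, is the honest and correct response.

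Your diagnosis of the obstruction is also accurate in its specifics. The trivial bound $\binom{m}{2}+m+1\ge 2^n$ gives $m\ge(\sqrt{2}-o(1))2^{n/2}$, while $|\mathcal{F}_{n,2}|=(2-o(1))2^{n/2}$, and the missing factor $\sqrt{2}$ is exactly what one has to earn. Your explanation that in the canonical family only the ``cross'' pairs $A\subseteq V_1$, $B\subseteq V_2$ are productive, so half the pairs are wasted, is the right intuition, and the entropy heuristic ($n\le H(A,B)\le 2\log_2 m$, with the hope of saving a bit from overlap) is a sensible way to try to formalize it — but, as you say, nothing currently forces an arbitrary $2$-base to waste that bit, and there is no known compression operation that pushes a $2$-base toward the canonical form without increasing size. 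The inductive sketch (restrict to $\mathcal{G}\cap\mathcal{P}(X\setminus\{x\})$, which is a $2$-base of $X\setminus\{x\}$, and count the sets of $\mathcal{G}$ that contain $x$) is essentially the argument of~\cite{Leveque} and, as you observe, its fixed point falls short of the constant $2$. In short: you have not produced a proof, you were right not to claim one, and your account of why the problem resists the available techniques matches the state of the art described in the paper.

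One minor sharpening: you note that $\mathcal{G}$ must contain all singletons and that $X$ is in $\mathcal{G}$ or is a union of two proper members. Worth adding that these observations are symmetric under complementation only for the disjoint-union problem (where one can dualize via $x\mapsto X\setminus x$ on one side of each pair), whereas a $2$-base has no such duality — another concrete way the two problems differ, and another reason the $2$-generator techniques do not transfer.
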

\noindent
In fact, Frein, L\'ev\^eque and Seb\H o \cite{Leveque} made the analogous
conjecture for all \(k\).

\begin{conjecture}[Frein, L\'ev\^eque, Seb\H o]
\label{conj:erdosgen}
If \(X\) is an \(n\)-element set, \(k \leq n\), and \(\mathcal{G} \subset
\mathcal{P}X\) is a \(k\)-base for \(X\), then \(|\mathcal{G}| \geq
| \mathcal{F}_{n,k}|\). If \(n > 2k\), equality holds only if \(\mathcal{G}\) is
a canonical \(k\)-generator for \(X\).
\end{conjecture}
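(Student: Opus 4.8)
The plan is to follow the strategy that establishes Conjecture~\ref{conj:frein} for large $n$: obtain a crude lower bound by counting, upgrade it to a stability statement showing that any near-optimal $k$-base is \emph{almost canonical}, and then clean up to pin down the extremal families exactly. Two reductions come first. If $\emptyset \in \mathcal{G}$ we may delete it, since expressing a set as a union of at most $k$ members, one of which is empty, uses at most $k-1$ of the others; so we may assume $\emptyset \notin \mathcal{G}$, and this forces every singleton $\{x\}$ into $\mathcal{G}$, because the only members of $\mathcal{G}$ that can appear in a representation of $\{x\}$ are subsets of $\{x\}$. Secondly, for every $A \subseteq X$ the subfamily $\{G \in \mathcal{G} : G \subseteq A\}$ is a $k$-base for $A$: if $Y \subseteq A$ is a union of at most $k$ members of $\mathcal{G}$, then each of those members is contained in $Y \subseteq A$. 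Crucially this uses no disjointness, so the argument can be localised to subsets of $X$, which the final step will exploit.

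For the crude bound, the number of distinct unions of at most $k$ members of an $m$-element family is at most $\sum_{j=0}^{k}\binom{m}{j}$, which must be at least $2^{n}$; this forces $|\mathcal{G}| \ge (1-o(1))\,(k!)^{1/k}\,2^{n/k}$, within a constant factor --- roughly $e$ --- of $|\mathcal{F}_{n,k}|$, but not sharp. The real work is the stability step: assuming $|\mathcal{G}|$ is within that factor of $|\mathcal{F}_{n,k}|$, produce a partition $X = V_1 \sqcup \dots \sqcup V_k$ into classes of nearly equal size such that all but a vanishing fraction of $\mathcal{G}$ consists of sets that are (nearly) contained in a single $V_i$. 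I would try to read off such a partition by averaging over the ground set: a typical element $x$ must lie in many members of $\mathcal{G}$ (count the $2^{n-1}$ subsets containing $x$, each needing a member through $x$), and the co-occurrence statistics of pairs $\{x,y\}$ inside members of $\mathcal{G}$ ought to cluster $X$ into $k$ blocks; the blocks must be nearly equal because $\mathcal{G}$ then contains roughly $\sum_{i} 2^{|V_i|}$ sets, a quantity that is minimised --- at a value close to $|\mathcal{F}_{n,k}|$ --- precisely when the $|V_i|$ are as equal as possible.

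Granting the approximate structure, the endgame is to show $\mathcal{P}(V_i)\setminus\{\emptyset\} \subseteq \mathcal{G}$ for every $i$. Fixing $i$ and a non-empty $S \subseteq V_i$, one selects a superset $Y \supseteq S$ for which the approximate structure forces every representation of $Y$ as a union of at most $k$ members of $\mathcal{G}$ to include a member equal to $S$; feeding in a sufficiently rich supply of such test sets, together with the restriction principle applied to $A = V_i$, should pin $\mathcal{G}$ down to a canonical $k$-base and yield both the inequality and the equality case for $n > 2k$.

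The genuine obstacle is the stability step, and it is exactly here that the loss of disjointness hurts. For a $k$-generator, the bare requirement that $X$ be generated already produces a partition of $X$ into at most $k$ members of $\mathcal{G}$, and recursing on the parts drives the entire argument; for a $k$-base, covering $X$ by at most $k$ members gives only $\sum_i |G_i| \ge n$, which is far too weak to bootstrap, and a cheap $k$-base could in principle consist of many large, heavily overlapping members. Controlling the overlaps of a near-optimal $k$-base appears to need a genuinely new idea --- it is the same difficulty that keeps Conjecture~\ref{conj:erdos} open already for $k=2$ --- and I would expect that, as with Conjecture~\ref{conj:frein}, the argument can at first be carried through only when $n$ is a large multiple of $k$, where the extremal partition is perfectly balanced and the counting is cleanest.
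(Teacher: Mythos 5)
This statement is Conjecture~\ref{conj:erdosgen}, which the paper records as an \emph{open} problem; the paper does not prove it, and indeed everything proved in the paper (Theorems~\ref{thm:2case} and~\ref{thm:kcase}, Propositions~\ref{prop:kstab} and~\ref{prop:stab}) concerns $k$-\emph{generators}, where the unions are required to be disjoint. There is therefore no ``paper's proof'' to compare your attempt against, and your proposal, to its credit, does not actually claim to supply one: you sketch the crude counting bound and the general shape of a stability-plus-cleanup argument, and then explicitly flag the stability step as the place where the approach breaks, concluding that a new idea is needed. That diagnosis is correct, and it is worth being precise about why. The engine of the paper's argument for $k$-generators is the Kneser graph $H$ on $\mathcal{P}X$: a $k$-generator $\mathcal{G}$ has the property that almost every subset of $X$ corresponds to a $j$-clique ($j\le k$) in $H[\mathcal{G}]$, because the sets in a disjoint union are pairwise disjoint, i.e.\ pairwise adjacent in $H$. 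This is what turns the generating hypothesis into a lower bound on $K_k(H[\mathcal{G}])$ and allows the Tur\'an-type stability machinery (Lemma~\ref{lemma:upperbound}, Theorem~\ref{thm:makekpartite}) to bite. For a $k$-base the representing sets may overlap arbitrarily, so a representation gives no clique (not even an edge) in $H$, and the whole bridge from ``generates almost all subsets'' to ``has many $K_k$'s'' collapses. Your heuristic of reading off a partition from co-occurrence statistics of pairs $\{x,y\}$ is a plausible thing to try, but you give no mechanism for forcing the near-optimal $k$-base to concentrate on sets lying inside the blocks rather than, say, a few huge sets with controlled overlaps, and that is exactly the unsolved core of the problem (already open for $k=2$, as Conjecture~\ref{conj:erdos}). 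The small preparatory observations you make (discarding $\emptyset$, forcing singletons into $\mathcal{G}$, the restriction of $\mathcal{G}$ to $\mathcal{P}A$ being a $k$-base for $A$) are all correct, but they do not close, or even significantly narrow, this gap.
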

\noindent
Again, they were able to prove this for \(k \leq n \leq 3k\).

In this paper, we study \(k\)-generators when \(n\) is large compared
to \(k\). Our main results are as follows.

\begin{theorem}
\label{thm:2case}
If \(n\) is sufficiently large, \(X\) is an \(n\)-element set, and
\(\mathcal{G} \subset \mathcal{P}X\) is a 2-generator for \(X\), then
\(|\mathcal{G}| \geq |\mathcal{F}_{n,2}|\). Equality holds only if
\(\mathcal{G}\) is of the form \(\mathcal{F}_{n,2}\).
\end{theorem}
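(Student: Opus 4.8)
The plan is to induct on $n$, proving the inequality together with a stability statement (``if $|\mathcal{G}|$ is within a small constant factor of $|\mathcal{F}_{n,2}|$ then $\mathcal{G}$ is close to a canonical $2$-generator''), since the near-extremal case of the induction will feed back on itself. The starting point is that the naive pair-count gives far too little: every nonempty subset of $X$ is the disjoint union of two distinct members of $\mathcal{G}\cup\{\emptyset\}$, so $\binom{|\mathcal{G}|+1}{2}\ge 2^{n}-1$ and hence $|\mathcal{G}|\gtrsim\sqrt2\cdot 2^{n/2}$, whereas $|\mathcal{F}_{n,2}|=2^{\lceil n/2\rceil}+2^{\lfloor n/2\rfloor}-2$ is roughly $2\cdot 2^{n/2}$. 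Recovering this factor $\sqrt2$ is the whole difficulty, and it cannot be done using only $|\mathcal{G}|$ or only the sizes of the members of $\mathcal{G}$ --- genuine structure is needed.

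For the inductive step fix $v\in X$ and set $\mathcal{G}_v=\{A\in\mathcal{G}:v\in A\}$, $\mathcal{G}_{\bar v}=\{A\in\mathcal{G}:v\notin A\}$ and $\mathcal{A}_v=\{A\setminus\{v\}:A\in\mathcal{G}_v\}$, so that $|\mathcal{G}|=|\mathcal{G}_v|+|\mathcal{G}_{\bar v}|=|\mathcal{A}_v|+|\mathcal{G}_{\bar v}|$ (the map $A\mapsto A\setminus\{v\}$ is injective on $\mathcal{G}_v$, and $\emptyset\in\mathcal{A}_v$ since $\{v\}\in\mathcal{G}$). Then (i) $\mathcal{G}_{\bar v}$ is a $2$-generator for $X\setminus\{v\}$ --- any decomposition of a set avoiding $v$ uses only sets avoiding $v$ --- so $|\mathcal{G}_{\bar v}|\ge|\mathcal{F}_{n-1,2}|$ by induction; and (ii) every $z\subseteq X\setminus\{v\}$ can be written $z=A\sqcup B$ with $A\in\mathcal{A}_v\cup\{\emptyset\}$ and $B\in\mathcal{G}_{\bar v}\cup\{\emptyset\}$ --- decompose $z\cup\{v\}$ in $\mathcal{G}$ and delete $v$ from the part containing it. So the task reduces to: if $\mathcal{A},\mathcal{B}$ are families on an $(n-1)$-element set with $\mathcal{B}$ a $2$-generator and satisfying the ``asymmetric generation'' property (ii), then $|\mathcal{A}|+|\mathcal{B}|\ge|\mathcal{F}_{n,2}|$, with equality only in the obvious configuration. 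Note $|\mathcal{F}_{n,2}|-|\mathcal{F}_{n-1,2}|=2^{\lfloor(n-1)/2\rfloor}$ exactly, so morally one needs $|\mathcal{A}_v|\gtrsim 2^{\lfloor(n-1)/2\rfloor}$.

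To prove this I would double count the $2^{n-1}$ subsets $z$ of $X\setminus\{v\}$: choosing a decomposition as in (ii) and grouping by $A$ gives
\[2^{n-1}\le\sum_{A\in\mathcal{A}_v\cup\{\emptyset\}}\bigl|(\mathcal{G}_{\bar v}\cup\{\emptyset\})\cap\mathcal{P}\bigl((X\setminus\{v\})\setminus A\bigr)\bigr|\le\sum_{A}\min\bigl(|\mathcal{G}_{\bar v}|+1,\ 2^{\,n-1-|A|}\bigr).\]
If $|\mathcal{G}_{\bar v}|$ exceeds $|\mathcal{F}_{n-1,2}|$ by a definite constant factor, then already the crude product bound $(|\mathcal{A}_v|+1)(|\mathcal{G}_{\bar v}|+1)\ge 2^{n-1}$ finishes, since $t\mapsto 2^{n-1}/t+t$ is increasing beyond its minimum $2^{(n-1)/2}$ and $|\mathcal{F}_{n-1,2}|$ already lies past it. Otherwise $\mathcal{G}_{\bar v}$ is near-extremal, so by the stability part of the inductive hypothesis it is close to a canonical $2$-generator with classes $W_1,W_2$ of near-equal size; one then uses (ii) on the straddling subsets $z$ --- almost all $2^{n-1}$ of them --- to force $\mathcal{A}_v$ either to contain essentially the whole power set of the smaller class, or to contain on the order of $2^{(n-1)/2}$ straddling sets (each straddling set ``handles'' only $\sim 2^{(n-1)/2}$ of the $z$'s). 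In either case $|\mathcal{A}_v|\gtrsim 2^{\lfloor(n-1)/2\rfloor}$, whence $|\mathcal{A}_v|+|\mathcal{G}_{\bar v}|\ge|\mathcal{F}_{n,2}|$, with equality exactly when $\mathcal{G}_{\bar v}$ is canonical and $\mathcal{A}_v=\mathcal{P}(W_i)$ for the smaller class $W_i$ --- equivalently $\mathcal{G}=\mathcal{F}_{n,2}$. Throughout, ``$n$ sufficiently large'' is used to absorb lower-order error terms (e.g.\ $3^{n/2}$ and $\mathrm{poly}(n)\cdot 2^{n/2}$ against $2^{n}$ and $2^{n/2}$).

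The main obstacle is precisely this recovery of the factor $\sqrt2$, which forces one past pure counting: the essential structural input is that a $2$-generator must carry sets of size about half the ground set --- more precisely, its large-set ``tail'' is controlled, which is where a Kruskal--Katona-type estimate enters --- and that, reflected through (ii), this pins $|\mathcal{A}_v|$ down to $\sim 2^{(n-1)/2}$ rather than the $\sim 2^{(n-1)/2}/\sqrt2$ the product bound alone permits. A second, unavoidable obstacle is bookkeeping: because the near-extremal case relies on $\mathcal{G}_{\bar v}$ being close to canonical, the induction must carry a stability statement strong enough to be passed down, and the equality case requires tracking tightness through every one of the inequalities above.
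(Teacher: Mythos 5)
Your route is genuinely different from the paper's (which works directly with the Kneser graph $H[\mathcal{G}]$, proving a $K_{k+1}$-density bound, a stability variant of the Simonovits theorem, and --- for odd $n$ --- almost-bipartiteness via odd-cycle counting and the Alon--Krivelevich property-testing bound): you propose an induction on $n$ via the deletion/link pair $(\mathcal{G}_{\bar v},\mathcal{A}_v)$. Your reductions are sound --- (i), (ii), the identity $|\mathcal{F}_{n,2}|-|\mathcal{F}_{n-1,2}|=2^{\lfloor(n-1)/2\rfloor}$, and the observation that the crude product bound already finishes once $|\mathcal{G}_{\bar v}|$ exceeds $|\mathcal{F}_{n-1,2}|$ by a fixed factor are all correct --- but there are two genuine gaps.

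First, the induction has no base: your inductive step needs $n$ large to absorb error terms such as $\mathrm{poly}(n)\cdot 2^{n/2}$ against $2^n$, so the base case must be some large $n_0$, and proving the theorem for $n=n_0$ directly is essentially the original problem (the paper avoids this entirely by not inducting). Second, and more fundamentally, the stability input is neither stated precisely nor proved, and the threshold at which it must operate is demanding: for $n$ even you need every $2$-generator of $[n-1]$ of size up to $\tfrac{2+\sqrt2}{3}\,|\mathcal{F}_{n-1,2}|\approx 1.14\,|\mathcal{F}_{n-1,2}|$ (and for $n$ odd, roughly $\tfrac{3+\sqrt5}{4}\,|\mathcal{F}_{n-1,2}|\approx 1.31\,|\mathcal{F}_{n-1,2}|$) to be close to canonical, with ``close'' quantitative enough to drive the straddling-set count. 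The paper's Proposition~\ref{prop:kstab} only gives this for $(1+\eta)$ with $\eta$ tiny, and only when $k\mid n$. The crux of your sketch --- ``use (ii) on the straddling subsets to force $\mathcal{A}_v$ either to contain essentially $\mathcal{P}(W_i)$ or $\sim 2^{(n-1)/2}$ straddling sets'' --- is asserted rather than derived, and this is precisely where the paper expends most of Section~\ref{sec:2case} (Claims 1--4, resting on the almost-bipartite decomposition). The passing appeal to a Kruskal--Katona estimate on the large-set tail is never connected to any concrete step. Until a base case and a precise, provable stability lemma at the required (non-perturbative) threshold are supplied, this remains an outline rather than a proof.
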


\begin{theorem}
\label{thm:kcase}
If \(k \in \mathbb{N}\), \(n\) is a sufficiently large multiple of \(k\), \(X\)
is an \(n\)-element set, and \(\mathcal{G}\) is a \(k\)-generator for \(X\),
then \(|\mathcal{G}| \geq |\mathcal{F}_{n,k}|\). Equality holds only if
\(\mathcal{G}\) is of the form \(\mathcal{F}_{n,k}\).
\end{theorem}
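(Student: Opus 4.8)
The plan is to prove Theorem~\ref{thm:kcase} in two stages: an \emph{approximate/stability} stage, showing that every \(k\)-generator has size at least \((1-o(1))k2^{q}\) and that any \(k\)-generator whose size is within a constant factor of \(|\mathcal{F}_{n,k}|\) must already be ``close'' to a canonical family; and a \emph{bootstrapping} stage, upgrading this to the exact inequality \(|\mathcal{G}|\ge k(2^{q}-1)\) together with the characterisation of equality. Here \(n=qk\), so \(|\mathcal{F}_{n,k}|=k(2^{q}-1)\) and the target extremal family is a union of \(k\) power-sets of \(q\)-element classes.

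The workhorse throughout is the \emph{hereditary property}: if \(\mathcal{G}\) is a \(k\)-generator for \(X\) and \(A\subseteq X\), then \(\mathcal{G}\cap\mathcal{P}(A)\) is a \(k\)-generator for \(A\), since any family of pairwise disjoint sets with union inside \(A\) consists of subsets of \(A\). Immediate consequences: every singleton lies in \(\mathcal{G}\), and \(X\) itself is a disjoint union of at most \(k\) members of \(\mathcal{G}\). A first structural principle is that a near-optimal \(\mathcal{G}\) cannot have ``deficient'' elements: if, say, \(\mathcal{G}_{v}:=\{G\in\mathcal{G}:v\in G\}=\{\{v\}\}\), then every \(x\ni v\) must use \(\{v\}\) as a piece, so \(\mathcal{G}\cap\mathcal{P}(X\setminus\{v\})\) is a \((k-1)\)-generator of an \((n-1)\)-set and hence has size at least \(2^{(n-1)/(k-1)}=2^{\,q+(q-1)/(k-1)}\gg k2^{q}\), a contradiction. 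Localising and iterating refinements of this idea---applying the hereditary property to the many \((q+O(1))\)-subsets of \(X\), and to restrictions \(X\setminus G\) for \(G\in\mathcal{G}\)---one aims to show that a near-optimal \(\mathcal{G}\) must look, on each small window and around each element, like a union of power-sets, and then to assemble these local pictures into a genuine partition \(X=V_{1}\sqcup\cdots\sqcup V_{k}\) with \(|V_{i}|=q+o(q)\) such that \(\mathcal{G}\) agrees with \(\mathcal{F}_{n,k}\) outside a ``bad set'' of size \(o(2^{q})\). A key difficulty, already in the approximate bound, is obtaining the correct leading constant \(k\): the crude counting estimate \(2^{n}\le\sum_{j\le k}\binom{|\mathcal{G}|}{j}\) only yields \(|\mathcal{G}|\gtrsim(k/e)2^{q}\), and even optimising over a single ``large-set threshold'' loses an exponential factor, so one genuinely needs the structural input that the disjoint decomposition of a typical subset is essentially forced and coordinate-wise, exactly as it is for \(\mathcal{F}_{n,k}\).

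For the bootstrapping stage, assume \(|\mathcal{G}|\le k(2^{q}-1)\); by the previous stage we have a partition \(X=V_{1}\sqcup\cdots\sqcup V_{k}\) with \(\mathcal{G}\) near-canonical with respect to it. I would then argue rigidity directly. First, using that \(n=qk\) is a \emph{large multiple of \(k\)}---which is precisely what forces ``sizes as equal as possible'' to mean ``all equal to \(q\)''---rule out \(|V_{i}|\ne q\): if some \(|V_{i}|\le q-1\), then generating the exponentially many subsets \(x\) with \(x\cap V_{i}\) arbitrary and \(x\supseteq\bigcup_{j\ne i}V_{j}\) minus a bounded set requires either more than \(2^{|V_{i}|}-1\) members inside \(V_{i}\) or many ``cross-sets'' straddling two classes, and a careful deficit/surplus count beats \(k(2^{q}-1)\). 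Once \(|V_{i}|=q\) for all \(i\), one shows \(\mathcal{P}(V_{i})\setminus\{\emptyset\}\subseteq\mathcal{G}\): if \(S\subsetneq V_{i}\) with \(S\notin\mathcal{G}\), choose \(x\) with \(x\cap V_{i}=S\) and \(x\) covering almost all of \(X\setminus V_{i}\); any disjoint \(\le k\)-decomposition of \(x\) must either break \(S\) into genuinely smaller \(\mathcal{G}\)-members or reach across a \(V_{j}\)-boundary, and since \(\mathcal{G}\) is near-canonical there are too few small cross-sets available---so \(x\) is ungeneratable unless \(\mathcal{G}\) contains compensating extra sets, again violating \(|\mathcal{G}|\le k(2^{q}-1)\). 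Together these give \(\mathcal{G}\supseteq\mathcal{F}_{n,k}\), hence \(\mathcal{G}=\mathcal{F}_{n,k}\) by the size bound, which simultaneously delivers the inequality and the equality case.

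The main obstacle I expect is making the first stage quantitative and robust enough: one needs stability in the form ``size within a constant factor (ideally within \(o(2^{q})\)) of optimal \(\Rightarrow\) \(o(2^{q})\)-close to canonical'', with error terms strong enough to feed the rigidity argument, and this requires carefully orchestrating the interplay between the lossy counting bound and the hereditary/window arguments so that the leading constant comes out exactly \(k\). A secondary persistent difficulty is controlling the cross-sets throughout both stages---members of \(\mathcal{G}\) meeting two of the \(V_{i}\) are exactly what is absent from \(\mathcal{F}_{n,k}\), and proving that they can never reduce the total size is what ultimately pins down the extremal family.
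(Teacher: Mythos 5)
Your plan correctly identifies the overall two-stage shape (stability, then rigidity), but the central step in Stage~1 is missing, and you explicitly flag this yourself: you note that the naive counting bound \(2^{n}\le\sum_{j\le k}\binom{|\mathcal{G}|}{j}\) only gives \(|\mathcal{G}|\gtrsim (k/e)2^{q}\), that one ``genuinely needs the structural input that the disjoint decomposition of a typical subset is essentially forced and coordinate-wise,'' and that you do not know how to extract it from hereditary restrictions and local windows. That is precisely the gap. Nothing in your sketch supplies the leading constant \(k\), let alone the near-canonical structure. The singleton/deficient-element observation and the hereditary property (\(\mathcal{G}\cap\mathcal{P}(A)\) is a \(k\)-generator of \(A\)) are both true but far too weak: a priori they give no control over how many sets of \(\mathcal{G}\) straddle a putative partition, and the ``assemble local pictures'' step is asserted, not argued.

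The paper obtains Stage~1 by a completely different route, and you would need to reinvent something like it. It works inside the ``Kneser'' graph \(H\) on \(\mathcal{P}X\) (two subsets adjacent iff disjoint). Since \(\mathcal{G}\) \(k\)-generates nearly all of \(\mathcal{P}X\), the induced subgraph \(H[\mathcal{G}]\) has \(K_{k}\)-count near \(2^{n}\); an Alon--Frankl-type probabilistic estimate on blow-ups \(K_{k+1}\otimes t\), combined with a convexity lemma relating the homomorphism density of a graph to that of its blow-up, shows that \(H[\mathcal{A}]\) has \(K_{k+1}\)-density \(o(1)\) for any \(\mathcal{A}\) of size \(\Omega(2^{n/k})\). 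A stability strengthening of the Simonovits theorem (together with Erd\H{o}s's bound on \(K_{r}\)-counts in \(K_{k+1}\)-free graphs, a high-minimum-degree trimming lemma, and Shearer's entropy inequality) then shows \(H[\mathcal{G}]\) can be made \(k\)-partite by deleting few edges, and this is what produces the equipartition \((S_{i})\) of \(X\) with \(\mathcal{G}\) nearly contained in \(\bigcup_{i}\mathcal{P}(S_{i})\); the constant \(k\) drops out of the Tur\'an-type inequalities, not from counting decompositions directly. Without something playing this role, your Stage~1 does not go through.

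Your Stage~2 is closer in spirit to the paper's perturbation step (Proposition~\ref{prop:stab}), which shows that any \(k\)-generator of size at most \(|\mathcal{F}_{n,k}|\) that is \(o(1)\)-close to a canonical family must actually equal it. But the paper's argument is sharper than what you sketch: it fixes the largest deficiency class \(\mathcal{F}_{1}=\mathcal{P}(S_{1})\setminus(\{\emptyset\}\cup\mathcal{G})\), shows via a careful union-and-projection count that generating the sets \(x_{1}\sqcup s_{2}\sqcup\cdots\sqcup s_{k}\) with \(x_{1}\in\mathcal{F}_{1}\) forces, for each good \(x_{1}\), at least \(k+1\) \emph{distinct} extra sets \(z^{(1)},\dots,z^{(k+1)}\in\mathcal{G}\setminus\bigcup_{i}\mathcal{P}(S_{i})\) all meeting \(S_{1}\) in exactly \(x_{1}\), giving \(|\mathcal{E}|\ge(k+1)(1-o(1))|\mathcal{F}_{1}|\) and hence \(\mathcal{F}_{1}=\emptyset\). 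Your description of ``reaching across a \(V_{j}\)-boundary'' and ``too few small cross-sets'' gestures at the right phenomenon, but you would need a comparably explicit accounting to make the deficit beat the cross-set budget. In short: the overall architecture is right, but Stage~1 as you describe it does not work, and the key idea that makes it work --- the Kneser-graph / Tur\'an-stability machinery --- is absent.
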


In other words, we prove Conjecture \ref{conj:frein} for all sufficiently large \(n\)
when \(k = 2\), and for \(n\) a sufficiently
large multiple of \(k\) when \(k \geq 3\). We use some ideas of Alon and Frankl
\cite{alon}, and also techniques of the first author from \cite{ellis}, in
which
asymptotic results were obtained.

As noted in \cite{Leveque}, if \(\mathcal{G} \subset \mathcal{P}X\) is a
\(k\)-generator (or even a \(k\)-base) for \(X\), then the number of ways of
choosing at most \(k\) sets from \(\mathcal{G}\) is
clearly at least the number of subsets of \(X\). Therefore $|\mathcal{G}|^k
\geq 2^n$, which immediately gives
$$|\mathcal{G}| \geq 2^{n/k}.$$
Moreover, if \(|\mathcal{G}|=m\), then
\begin{equation}
\label{eq:countingbound}
\sum_{i=0}^{k} {m \choose i} \geq 2^{n}.
\end{equation}
Crudely, we have
\[\sum_{i=0}^{k-1} {m \choose i} \leq 2m^{k-1},\]
so
\[\sum_{i=0}^{k} {m \choose i} \leq {m \choose k} + 2m^{k-1}.\]
Hence, if \(k\) is fixed, then
\[(1+O(1/m)) {m \choose k} \geq 2^n,\]
so
\begin{equation}
\label{eq:trivbound}
 |\mathcal{G}| \geq (k!)^{1/k}2^{n/k}(1-o(1)).
\end{equation}

Observe that if \(n = qk+r\), where \(0 \leq r <k\), then
\begin{equation}
 \label{eq:crudebound}
| \mathcal{F}_{n,k}| = (k+r)2^{q}-k < (k+r)2^{q} = k2^{n/k}(1+r/k)2^{-r/k} <
c_{0}k2^{n/k},
\end{equation}
where
\[c_{0} := \frac{2}{2^{1/\log 2}\log 2} = 1.061 \textrm{ (to 3 d.p.)}.\]

Now for some preliminaries. We use the following standard notation. For \(n \in
\mathbb{N}\), \([n]\) will denote the
set \(\{1,2,\ldots,n\}\). If \(x\) and \(y\) are disjoint sets, we will sometimes write their union as \(x \sqcup y\), rather than \(x \cup y\), to emphasize the fact that the sets are disjoint.

If \(k \in \mathbb{N}\), and \(G\) is a graph,
\(K_{k}(G)\) will denote the number of \(k\)-cliques
in \(G\). Let \(T_{s}(n)\) denote the \(s\)-partite Tur\'an graph (the complete
$s$-partite graph on $n$ vertices with parts of sizes as equal as possible),
and let
\(t_{s}(n) = e(T_{s}(n))\). For \(l \in \mathbb{N}\), \(C_{l}\) will denote the
cycle of length \(l\).

If \(F\) is a (labelled) graph on \(f\) vertices, with vertex-set
\(\{v_1,\dots,v_f\}\) say, and \(\mathbf{t} = (t_1,\dots,t_f) \in
\mathbb{N}^{f}\), we define the \(\mathbf{t}\)-{\em blow-up} of \(F\),
\(F \otimes \mathbf{t}\), to be the graph obtained by replacing \(v_i\) with an
independent set \(V_i\) of size \(t_i\), and joining each vertex of \(V_{i}\)
to each vertex of \(V_{j}\) whenever \(v_i
v_j\) is an edge of \(F\). With slight abuse of notation, we will write \(F
\otimes t\) for the symmetric blow-up \(F \otimes (t,\dots,t)\).

If \(F\) and \(G\) are graphs, we write \(c_{F}(G)\) for the number of
injective graph homomorphisms from \(F\) to \(G\), meaning injections from
\(V(F)\) to \(V(G)\) which take edges of \(F\) to edges of
\(G\). The {\em density of F in G} is defined to be \[d_{F}(G) =
\frac{c_{F}(G)}{|G|(|G|-1)\cdots(|G|-|F|+1)},\] i.e. the probability that a
uniform random injective map from \(V(F)\) to \(V(G)\) is a
graph homomorphism from \(F\) to \(G\). Hence, when \(F = K_{k}\), the density
of \(K_{k}\)'s in an \(n\)-vertex graph \(G\) is simply \(K_{k}(G)/{n \choose
k}\).

Although we will be interested in the density \(d_{F}(G)\), it will sometimes
be more convenient to work with the following closely related quantity, which
behaves very nicely when we take blow-ups. We
write \(\textrm{Hom}_{F}(G)\) for the number of homomorphisms from \(F\) to
\(G\), and we define the {\em homomorphism density of F in G} to be \[h_{F}(G)
= \frac{\textrm{Hom}_{F}(G)}{|G|^{|F|}},\] i.e.
the probability that a uniform random map from \(V(F)\) to \(V(G)\) is a graph
homomorphism from \(F\) to \(G\).

Observe that if \(F\) is a graph on \(f\) vertices, and \(G\) is a graph on
\(n\) vertices, then the number of homomorphisms from \(F\) to \(G\) which are
not injections is clearly at most
\[{f \choose 2} n^{f-1}.\]
Hence,
\begin{equation}
\label{eq:homodens}
d_{G}(F) \geq \frac{h_{G}(F)n^{f} - {f \choose 2}n^{f-1}}{n(n-1)\cdots(n-f+1)} \geq h_{G}(F)-O(1/n),
\end{equation}
if \(f\) is fixed. In the other direction,
\begin{equation}
\label{eq:denshomo}
d_{F}(G) \leq \frac{n^{f}}{n(n-1)\cdots(n-f+1)}h_{F}(G) \leq (1+O(1/n))
h_{F}(G)
\end{equation}
if \(f\) is fixed. Hence, when working inside large graphs, we can pass freely
between the density of a fixed graph \(F\) and its homomorphism density, with
an `error' of only \(O(1/n)\).

Finally, we will make frequent use of the AM/GM inequality:
\begin{theorem}
If \(x_1,\ldots,x_n \geq 0\), then
\[\left(\prod_{i=1}^{n} x_i\right)^{1/n} \leq \frac{1}{n}\sum_{i=1}^{n}x_i.\]
\end{theorem}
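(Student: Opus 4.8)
The plan is first to dispose of the degenerate case: if some $x_i = 0$, then the left-hand side equals $0$ while the right-hand side is a sum of nonnegative terms, so the inequality is immediate. Henceforth assume $x_i > 0$ for all $i$. I would then invoke the (strict) concavity of the natural logarithm on $(0,\infty)$ — which follows from $(\log)''(x) = -1/x^2 < 0$ — together with Jensen's inequality, to obtain
\[\log\left(\frac{1}{n}\sum_{i=1}^{n} x_i\right) \geq \frac{1}{n}\sum_{i=1}^{n}\log x_i = \log\left(\left(\prod_{i=1}^{n} x_i\right)^{1/n}\right),\]
and exponentiating both sides (using that $\exp$ is increasing) gives exactly the claimed bound.

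If one prefers a self-contained argument that does not quote Jensen's inequality, I would instead use Cauchy's forward–backward induction. The base case $n = 2$ is just the rearrangement of $(\sqrt{x_1}-\sqrt{x_2})^2 \geq 0$ into $\sqrt{x_1 x_2} \leq (x_1+x_2)/2$. For the forward step, assuming the result for $n$ numbers, one splits $2n$ given numbers into two blocks of $n$, applies the inductive hypothesis to each block, and then applies the $n=2$ case to the two block-averages; this establishes the inequality whenever $n$ is a power of $2$. For the backward step, given the inequality for $n+1$ numbers and arbitrary positive $x_1,\dots,x_n$, one sets $x_{n+1} := A := \frac1n\sum_{i=1}^n x_i$, applies the $(n+1)$-variable inequality, and simplifies to conclude $(\prod_{i=1}^n x_i)^{1/n} \leq A$. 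Since every positive integer lies below some power of $2$, descending by the backward step from that power of $2$ covers all $n$.

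I do not expect a genuine obstacle here — this is a classical inequality and both routes are routine — so the honest answer to "which step is hardest" is that none is. The only points deserving a sliver of care are the separate handling of zero entries in the convexity proof, and, in Cauchy's argument, checking that the forward and backward steps together exhaust all $n \in \mathbb{N}$. The equality case (all $x_i$ equal), though not needed for the statement as phrased, drops out of the strict concavity of $\log$ or of the strict inequality in the $n = 2$ base case.
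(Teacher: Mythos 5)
The paper states the AM/GM inequality as a known preliminary result and gives no proof of its own, so there is no in-paper argument to compare against. Both of your routes are correct and standard: the Jensen/concavity-of-$\log$ argument (with the zero case handled separately) and Cauchy's forward--backward induction (power-of-two forward step plus the $n+1\to n$ descent via $x_{n+1}:=\frac1n\sum x_i$) each give the full statement. For what it is worth, the Jensen route is closer in spirit to the rest of the paper, which already invokes Jensen's inequality explicitly in the proof of Lemma~\ref{lemma:blowupcount} and uses convexity arguments throughout; the Cauchy induction is the more self-contained alternative if one wants to avoid quoting Jensen.
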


\section{The case \(k \mid n\) via extremal graph theory.}

For \(n\) a sufficiently large multiple of \(k\), it turns out to be possible
to prove Conjecture \ref{conj:frein} using stability versions of Tur\'an-type
results. We will prove the following

\begin{kcaserepeat}
If \(k \in \mathbb{N}\), \(n\) is a sufficiently large multiple of \(k\), \(X\)
is an \(n\)-element set, and \(\mathcal{G}\) is a \(k\)-generator for \(X\),
then \(|\mathcal{G}| \geq |\mathcal{F}_{n,k}|\). Equality holds only if
\(\mathcal{G}\) is of the form \(\mathcal{F}_{n,k}\).
\end{kcaserepeat}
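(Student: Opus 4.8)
The plan is to translate the extremal problem for $k$-generators into a problem about counting $k$-cliques in an auxiliary graph, and then to invoke a stability version of Turán's theorem to pin down the extremal configuration. Suppose $\mathcal{G}$ is a $k$-generator for $X$ with $|\mathcal{G}| = m$; by \eqref{eq:trivbound} and \eqref{eq:crudebound} we may assume $m \leq c_0 k 2^{n/k}$, so $m$ is of the right order of magnitude. Build a graph $H$ on vertex set $\mathcal{G}$ (or rather on the non-empty members of $\mathcal{G}$), joining $A$ and $B$ whenever $A \cap B = \emptyset$. Every $x \subset X$ that is generated using exactly $j \leq k$ distinct sets of $\mathcal{G}$ corresponds to a $j$-clique in $H$ together with a choice of which union it forms; crucially, a $j$-clique generates exactly one subset (its union), so the number of subsets generated using at most $k$ sets is at most $\sum_{j=0}^{k} K_j(H)$. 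Since $\mathcal{G}$ generates all of $\mathcal{P}X$, this forces
\[
\sum_{j=0}^{k} K_j(H) \geq 2^n.
\]
Because the lower-order terms $K_j(H)$ for $j<k$ are $O(m^{k-1}) = O(2^{(k-1)n/k} \cdot \mathrm{poly})$, which is negligible against $2^n$, we get $K_k(H) \geq (1-o(1)) 2^n$. On the other hand $H$ has $m$ vertices, so we have forced a graph on $m \approx c_0 k 2^{n/k}$ vertices to contain nearly $2^n \approx (m/(c_0 k))^k$ copies of $K_k$ — a clique-density statement.

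The next step is the extremal input: among all graphs on $m$ vertices, the number of $k$-cliques is maximised (for the relevant range) by the complete $k$-partite Turán graph $T_k(m)$, which has roughly $(m/k)^k$ copies of $K_k$. Quantitatively, one checks that $(m/k)^k \geq (1-o(1))2^n$ is exactly what is needed, and comparing with $|\mathcal{F}_{n,k}| = k\,2^{n/k}$ (when $k \mid n$, so $m_0 := |\mathcal{F}_{n,k}| = k(2^{n/k}-1)$, giving $(m_0/k)^k = (2^{n/k}-1)^k \approx 2^n$) shows that $m$ cannot be much smaller than $|\mathcal{F}_{n,k}|$. To get the exact bound $m \geq |\mathcal{F}_{n,k}|$ rather than an asymptotic one, and to get the uniqueness clause, I would use a \emph{stability} version: any $m$-vertex graph with at least $(1-\delta)$ times the maximal number of $K_k$'s must be obtainable from $T_k(m)$ by changing $o(m^2)$ edges (Kruskal–Katona-type / clique-density stability, as used by Alon–Frankl \cite{alon}). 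Thus $H$ is close to a balanced complete $k$-partite graph, which means $\mathcal{G}$ is close to a disjoint union of $k$ ``cross-intersecting-free'' blocks — i.e., close to $\mathcal{P}(V_1) \cup \dots \cup \mathcal{P}(V_k)$ for some near-balanced partition $(V_i)$ of $X$.

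Finally I would run a local-stability / bootstrapping argument to upgrade ``close to canonical'' to ``exactly canonical and exactly minimum size''. Having located an approximate partition $X = V_1 \sqcup \dots \sqcup V_k$ such that most members of $\mathcal{G}$ lie inside some $V_i$, one argues that $\mathcal{G}$ must in fact contain \emph{all} of $\mathcal{P}(V_i)\setminus\{\emptyset\}$ for each $i$: if some non-empty $S \subseteq V_i$ were missing, then every subset $x$ of $X$ with $x \cap V_i = S$ would have to be generated using the few ``bad'' sets of $\mathcal{G}$ that are not contained in a single block, and a counting argument (each bad set can only help generate $O(2^{n - n/k})$ such subsets, but there are $2^{n-n/k}$ subsets with $x\cap V_i = S$, times the number of choices of $S$) yields a contradiction unless the number of bad sets is itself essentially as large as a full power-set of a block — which would make $|\mathcal{G}|$ too big. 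This pins down that $\mathcal{G} \supseteq \bigcup_i \mathcal{P}(V_i)\setminus\{\emptyset\}$, and combined with $|\mathcal{G}| \leq |\mathcal{F}_{n,k}|$ (which we may assume, else there is nothing to prove) and the size formula for canonical generators, we conclude $\mathcal{G}$ is exactly canonical and the partition is as balanced as possible. \textbf{The main obstacle} I anticipate is the passage from the approximate structure given by Turán stability to the exact extremal statement: the stability theorem only controls $o(m^2)$ edges of $H$, i.e.\ $o(2^{2n/k})$ pairs, but a single ``misplaced'' set in $\mathcal{G}$ — one that straddles two blocks — destroys $\Theta(2^{n/k})$ cliques, so one must very carefully quantify how an almost-extremal $\mathcal{G}$ of size close to $|\mathcal{F}_{n,k}|$ cannot afford \emph{any} waste, ruling out both missing subsets within a block and extraneous straddling sets. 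This is where the precise value of $|\mathcal{F}_{n,k}| = (k+r)2^q - k$ and tight (non-asymptotic) clique-counting estimates must be combined, and it is the technical heart of the argument.
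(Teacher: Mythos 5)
Your overall architecture — build the ``disjointness'' graph $H$ on $\mathcal{G}$, count subsets via cliques to get $K_k(H)\geq(1-o(1))2^n$, invoke Tur\'an stability to conclude $H$ is nearly $k$-partite, then bootstrap to the exact extremal structure — matches the paper's strategy. But there is a genuine gap at the central step, and it is not merely a matter of quantification.

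You write that ``among all graphs on $m$ vertices, the number of $k$-cliques is maximised (for the relevant range) by the complete $k$-partite Tur\'an graph $T_k(m)$.'' This is false as stated: the complete graph $K_m$ has $\binom{m}{k}\approx m^k/k!$ copies of $K_k$, dwarfing the $\approx(m/k)^k$ copies in $T_k(m)$. The only bound you can extract from $K_k(H)\leq\binom{m}{k}$ together with $K_k(H)\geq(1-o(1))2^n$ is $m\geq(1-o(1))(k!)^{1/k}2^{n/k}$, which is exactly the trivial bound \eqref{eq:trivbound} and strictly weaker than $|\mathcal{F}_{n,k}|\approx k\,2^{n/k}$ since $(k!)^{1/k}<k$. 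Erd\H{o}s's theorem (that $K_k(G)\leq K_k(T_k(m))$) applies only to $K_{k+1}$-\emph{free} graphs, and likewise any stability statement around $T_k(m)$ presupposes control of $K_{k+1}$'s. Your proposal nowhere establishes that $H[\mathcal{G}]$ has few $K_{k+1}$'s, and without that, neither the clique-count bound nor the stability step has any force.

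The missing ingredient is precisely the paper's first main step: a blow-up / sampling argument (Lemma \ref{lemma:homodens}, following Alon--Frankl) showing that if $|\mathcal{G}|\geq 2^{n/k}$, then the homomorphism density of $K_{k+1}\otimes t$ in the Kneser graph restricted to $\mathcal{G}$ is exponentially small — because $k+1$ pairwise disjoint unions of $t$ random members of $\mathcal{G}$ force one union to have size at most $n/(k+1)$, which happens with probability $\leq 2^{-(\delta t-1)n}$ — and then Lemma \ref{lemma:blowupcount} to descend to $K_{k+1}$ itself, giving $d_{K_{k+1}}(H[\mathcal{G}])\leq 2^{-a_kn}$. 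Only with this in hand can one appeal to Erd\H{o}s-type bounds and their stability versions (the paper actually proves a bespoke stability theorem, Theorem \ref{thm:makekpartite}, whose hypotheses include small $K_{k+1}$-density). Your final bootstrapping paragraph correctly identifies the qualitative issue (a single misplaced set destroys $\Theta(2^{n/k})$ cliques) and the paper's Proposition \ref{prop:stab} resolves it with a more delicate counting of ``extra'' sets $\mathcal{E}$ needed to generate the missed subsets, but that step only becomes tractable once the $K_{k+1}$-paucity has rendered the Tur\'an-stability machinery applicable.
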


We need a few more definitions. Let \(H\) denote the graph with vertex-set
\(\mathcal{P}X\), where we join two subsets \(x,y \subset X\) if they are
disjoint. With slight abuse of terminology, we call
\(H\) the `Kneser' graph on \(\mathcal{P}X\) (although this usually means the
analogous graph on \(X^{(r)}\)). If \(\mathcal{F},\mathcal{G} \subset
\mathcal{P}X\), we say that \(\mathcal{G}\)
\(k\)-\textit{generates} \(\mathcal{F}\) if every set in \(\mathcal{F}\) is a
disjoint union of at most \(k\) sets in \(\mathcal{G}\).

\vspace{0.2cm}
\noindent
{\bf The main steps of the proof:}\, First, we will show that for any
\(\mathcal{A} \subset \mathcal{P}X\) with \(|\mathcal{A}| \geq
\Omega(2^{n/k})\), the density of \(K_{k+1}\)'s in the induced subgraph
\(H[\mathcal{A}]\) is \(o(1)\).

Secondly, we will observe that if \(n\) is a sufficiently large multiple of
\(k\), and \(\mathcal{G} \subset \mathcal{P}X\) has size close to
\(|\mathcal{F}_{n,k}|\) and \(k\)-generates almost all subsets
of \(X\), then \(K_{k}(H[\mathcal{G}])\) is very close to
\(K_{k}(T_{k}(|\mathcal{G}|))\), the number of \(K_{k}\)'s in the \(k\)-partite
Tur\'an graph on $|\mathcal{G}|$ vertices.

We will then prove that if \(G\) is any graph with small \(K_{k+1}\)-density,
and with \(K_{k}(G)\) close to \(K_{k}(T_{k}(|G|))\), then \(G\) can be made
\(k\)-partite by removing a small number of edges.
This can be seen as a (strengthened) variant of the Simonovits Stability
Theorem \cite{simonovits}, which states that any \(K_{k+1}\)-free graph \(G\)
with \(e(G)\) close to the maximum \(e(T_{k}(|G|))\),
can be made \(k\)-partite by removing a small number of edges.

This will enable us to conclude that \(H[\mathcal{G}]\) can be made
\(k\)-partite by the removal of a small number of edges, and therefore the structure of \(H[\mathcal{G}]\) is close to that of the Tur\'an graph
\(T_{k}(|\mathcal{G}|)\). This in turn will enable us to show that
the structure of \(\mathcal{G}\) is close to that of a canonical \(k\)-generator
\(\mathcal{F}_{n,k}\) (Proposition \ref{prop:kstab}).

Finally, we will use a perturbation argument to show that if \(n\) is
sufficiently large, and \(|\mathcal{G}| \leq |\mathcal{F}_{n,k}|\), then
\(\mathcal{G} = \mathcal{F}_{n,k}\), completing the proof.

\vspace{0.2cm}
In fact, we will first show that if \(\mathcal{A} \subset \mathcal{P}X\) with
\(|\mathcal{A}| \geq \Omega(2^{n/k})\), then the homomorphism density of
\(K_{k+1} \otimes t\) in \(H[\mathcal{A}]\) is
\(o(1)\), provided \(t\) is sufficiently large depending on \(k\). Hence, we
will need the following (relatively well-known) lemma relating the homomorphism
density of a graph to that of its blow-up.

\begin{lemma}
\label{lemma:blowupcount}
Let \(F\) be a graph on \(f\) vertices, let \(\mathbf{t} = (t_1,t_2,\dots,t_f)
\in \mathbb{N}^f\), and let \(F \otimes \mathbf{t}\) denote the
\(\mathbf{t}\)-blow-up of \(F\). If the homomorphism density of \(F\) in \(G\)
is \(p\), then the homomorphism density of \(F \otimes
\mathbf{t}\) in \(G\) is at least \(p^{t_1t_2\cdots t_f}\).
\end{lemma}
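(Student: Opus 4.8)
The plan is to exploit the product structure of the blow-up together with a convexity (power-mean / Jensen) argument. Write $V(F)=\{v_1,\dots,v_f\}$ and $n=|G|$. A uniform random map $\psi\colon V(F\otimes\mathbf{t})\to V(G)$ is the same as choosing, independently and uniformly, a vertex of $G$ for each of the $t_1+\cdots+t_f$ vertices of the blow-up; equivalently, for each $i$ we choose a function $\psi_i\colon V_i\to V(G)$, i.e.\ a tuple $(\psi_i(w))_{w\in V_i}\in V(G)^{t_i}$, all independently and uniformly. The key observation is that whether $\psi$ is a homomorphism factors as a product over the edges $v_iv_j\in E(F)$ of the indicator that \emph{every} pair $(\psi_i(w),\psi_j(w'))$ with $w\in V_i$, $w'\in V_j$ is an edge of $G$. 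I would make this bookkeeping precise and then compute the probability that $\psi$ is a homomorphism by conditioning.

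First I would introduce, for each vertex $v_i$ of $F$ and each $t_i$-tuple $\mathbf{a}=(a_1,\dots,a_{t_i})\in V(G)^{t_i}$, the multiset of labels it carries, and observe that $\psi$ restricted to $V_i$ ``behaves like'' the single vertex $\psi(v_i)$ would, \emph{jointly} across all the $t_i$ copies, only if each of the $t_i$ chosen vertices is simultaneously adjacent to each of the vertices chosen in $V_j$ for every neighbour $v_j$ of $v_i$ in $F$. The clean way to package this is: the homomorphism density of $F\otimes\mathbf{t}$ equals
\[
h_{F\otimes\mathbf{t}}(G)=\mathbb{E}\Bigl[\prod_{v_iv_j\in E(F)}\ \prod_{w\in V_i}\ \prod_{w'\in V_j}\mathbf{1}[\psi(w)\psi(w')\in E(G)]\Bigr],
\]
where the expectation is over the independent uniform choices $\psi(w)$, $w\in V(F\otimes\mathbf{t})$. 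Expanding and using independence across the $V_i$'s, one reduces to an average, over tuples $(\mathbf{a}^{(1)},\dots,\mathbf{a}^{(f)})$ with $\mathbf{a}^{(i)}\in V(G)^{t_i}$, of a product of adjacency indicators; writing $A$ for the $n\times n$ $0/1$ adjacency matrix of $G$, this is a homomorphism-counting expression of exactly the kind to which the tensor-power trick applies.

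The heart of the argument is then an inductive tensor-power / Cauchy--Schwarz step: blowing up a single vertex $v_i$ from multiplicity $t_i$ to $t_i+1$ can only decrease (weakly) the normalised count, or more directly, one proves by induction on $t_1+\cdots+t_f$ that $h_{F\otimes\mathbf{t}}(G)\ge h_F(G)^{t_1t_2\cdots t_f}$, the base case $\mathbf{t}=(1,\dots,1)$ being the definition $h_F(G)=p$. For the induction step I would peel off one copy, say increasing $t_1$ to $t_1+1$: condition on the images of all vertices outside the new copy of $v_1$, write the conditional probability that the new vertex completes a homomorphism as an average of terms, one per old assignment, and apply the power-mean inequality (equivalently, repeated Cauchy--Schwarz, i.e.\ the standard tensor-power trick for homomorphism densities) to get
\[
h_{F\otimes(t_1+1,t_2,\dots,t_f)}(G)\ \ge\ h_{F\otimes(t_1,t_2,\dots,t_f)}(G)^{\,(t_1+1)/t_1},
\]
and iterate over all coordinates. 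Combining these inequalities yields the exponent $t_1t_2\cdots t_f$. The main obstacle I anticipate is getting the conditioning and the direction of the convexity inequality exactly right: one must be careful that the quantity being averaged, when a coordinate is duplicated, really is a product of two identical factors (so that $\mathbb{E}[Z]^2\le\mathbb{E}[Z^2]$, i.e.\ Cauchy--Schwarz, pushes the right way), rather than a product of dependent-but-unequal factors; handling the $f$ coordinates one at a time, and only ever comparing $\mathbf{t}$ to $\mathbf{t}+\mathbf{e}_i$, keeps this transparent and is what I would write out in detail.
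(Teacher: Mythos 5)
Your proposal is correct and takes essentially the same route as the paper: fix the images of all but one original vertex class, observe that the blown-up class contributes a power of a conditional density, and apply Jensen's inequality to the convex map $x\mapsto x^{r}$, iterating coordinate by coordinate. The only difference is that the paper performs each blow-up $1\to t_i$ in a single Jensen step with integer exponent $r=t_i$, whereas your increment $t_1\to t_1+1$ uses the fractional exponent $(t_1+1)/t_1$; this is a valid but slightly more laborious variant of the same convexity argument, and, as you rightly flag, the clean way to run it is to condition only on the images chosen in the \emph{other} vertex classes (not on the existing copies of $v_1$), so that the quantity being Jensen-ed really is of the form $YZ^{t_1}$ with $Y\in\{0,1\}$ and $Z$ the normalised size of the common neighbourhood.
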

\begin{proof}

This is a simple convexity argument, essentially that of \cite{simonovits}. It
will suffice to prove the statement of the lemma when \(\mathbf{t} =
(1,\dots,1,r)\) for some \(r \in \mathbb{N}\). We think of \(F\) as a
(labelled)
graph on vertex set \([f] = \{1,2,\dots,f\}\), and \(G\) as a (labelled) graph
on vertex set \([n]\). Define the function \(\chi: [n]^{f} \to \{0,1\}\) by
\[\chi(v_1,\dots,v_f) = \left\{\begin{array}{ll} 1 & \textrm{if } i \mapsto
v_i \textrm{ is a homomorphism from }F \textrm{ to } G, \\
                                                           0 &
\textrm{otherwise.}
                                                          \end{array}\right.\]

Then we have
\[h_{F}(G) = \frac{1}{n^f}
\sum_{(v_1,\dots,v_f) \in
[n]^{f}}
\chi(v_1,\dots,v_f) = p.\]
The homomorphism density \(h_{F \otimes (1,\dots,1,r)}(G)\) of \(F
\otimes (1,\dots,1,r)\) in \(G\) is:

\begin{align*}
\label{eq:convexity}
h_{F \otimes
(1,\dots,1,r)}(G)&=\frac{1}{n^{f-1+r}}\sum_{(v_1,\dots,v_{f-1},v_f^{(1)},v_f^{(2)},\dots,v_f^{(r)})
\in
[n]^{f-1+r}} \prod_{i=1}^r
\chi(v_{1},\dots,v_{f-1},v_f^{(i)})\\
& = \frac{1}{n^{f-1}} \sum_{(v_1,\dots,v_{f-1}) \in [n]^{f-1}}
\left(\frac{1}{n}\sum_{v_f \in [n]} \chi(v_1,\dots,v_{f-1},v_f)\right)^r
\\
& \geq \left(\frac{1}{n^{f-1}} \sum_{(v_1,\dots,v_{f-1}) \in [n]^{f-1}}
\left(\frac{1}{n}\sum_{v_f \in [n]}
\chi(v_1,\dots,v_{f-1},v_f)\right)\right)^r \\
& = \left(\frac{1}{n^{f}} \sum_{(v_1,\dots,v_{f-1},v_f) \in [n]^{f}}
\chi(v_1,\dots,v_{f-1},v_f)\right)^r \\
& = p^r.
\end{align*}
Here, the inequality follows from applying Jensen's Inequality to the
convex function \(x \mapsto x^r\). This proves the lemma for \(\mathbf{t} =
(1,\ldots,1,r)\). By symmetry, the statement of the lemma holds for all vectors
of the
form \((1,\ldots,1,r,1,\ldots,1)\). Clearly, we may obtain \(F \otimes
\mathbf{t}\)
from \(F\) by a sequence of blow-ups by
these vectors, proving the lemma.
\end{proof}

The following lemma (a rephrasing of Lemma 4.2 in Alon and Frankl \cite{alon})
gives an upper bound on the homomorphism density of \(K_{k+1} \otimes t\) in
large induced subgraphs of the Kneser graph \(H\).

\begin{lemma}
\label{lemma:homodens}
If \(\mathcal{A} \subset \mathcal{P}X\) with \(|\mathcal{A}| = m =
2^{(\delta+1/(k+1))n}\), then
\[h_{K_{k+1} \otimes t}(H[\mathcal{A}]) \leq (k+1)2^{-n(\delta t-1)}.\]
\end{lemma}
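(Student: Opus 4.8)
The plan is to pass to a probabilistic formulation and then ``peel off'' one part of the blow‑up at a time, inducting on $k$.

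\textbf{Reformulation and a recursion.} Observe that $h_{K_{k+1}\otimes t}(H[\mathcal{A}])$ is exactly the probability that, if we pick sets $A_{i,a}\in\mathcal{A}$ ($i\in[k+1]$, $a\in[t]$) independently and uniformly at random, the resulting map is a graph homomorphism; writing $S_i:=\bigcup_{a=1}^{t}A_{i,a}$, this is the event that $S_1,\dots,S_{k+1}$ are pairwise disjoint. The map is a homomorphism if and only if, for each $i$, every one of $A_{i,1},\dots,A_{i,t}$ is disjoint from $S_{<i}:=\bigcup_{i'<i}S_{i'}$; conditioning on the first part's sets therefore gives
\[
 h_{K_{k+1}\otimes t}(H[\mathcal{A}])=\mathbb{E}_{S_1}\!\left[\left(|\mathcal{A}_{S_1}|/m\right)^{kt}\, h_{K_k\otimes t}(H[\mathcal{A}_{S_1}])\right],
\]
where $\mathcal{A}_{S_1}:=\{A\in\mathcal{A}:A\cap S_1=\emptyset\}$ is a family of subsets of the $(n-|S_1|)$‑element set $X\setminus S_1$, and $h_{K_1\otimes t}\equiv 1$. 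I would prove the lemma by induction on $k$: the case of $K_1\otimes t$ is immediate (every map is a homomorphism, and since $|\mathcal{A}|=2^{(\delta+1)n}\le 2^n$ forces $\delta\le 0$, the claimed bound $2^{-n(\delta t-1)}\ge 2^n$ is at least $1$). In the inductive step one applies the inductive hypothesis to $\mathcal{A}_{S_1}$ inside $X\setminus S_1$, together with the trivial estimate $|\mathcal{A}_{S_1}|\le\min(m,2^{\,n-|S_1|})$.

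\textbf{The key quantitative input.} The only fact about $\mathcal{A}$ one needs beyond $|\mathcal{A}|=m$ is that its sets are typically not too small. Since the number of subsets of $X$ of size at most $s$ is at most $2^{nH(s/n)}$ (with $H$ the binary entropy function), the fraction of $\mathcal{A}$ consisting of sets of size below $(1-\varepsilon)c_* n$ is at most $2^{-\Omega_{\varepsilon}(n)}$, where $c_*:=H^{-1}(\delta+1/(k+1))$. Hence $|S_1|\ge |A_{1,1}|\ge(1-\varepsilon)c_* n$ except with probability $2^{-\Omega(n)}$, on which exceptional event one bounds the integrand crudely. Feeding the bounds $|\mathcal{A}_{S_1}|\le\min(m,2^{\,n-|S_1|})$ and $|S_1|\gtrsim c_* n$ into the recursion, using the inductive bound on $h_{K_k\otimes t}(H[\mathcal{A}_{S_1}])$, and optimising over $\varepsilon$, should yield the bound $(k+1)2^{-n(\delta t-1)}$.

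\textbf{The main obstacle.} The delicate part is closing the induction with the \emph{correct} constant: the main term of the recursion must be bounded by $(k+1)2^{-n(\delta t-1)}$, not merely by $C^{n}\cdot 2^{-n(\delta t-1)}$ for some $C>1$ (a naive application of the inclusion $|\mathcal{A}_{S_1}|\le 2^{\,n-|S_1|}$ and of $|S_1|\ge 0$ loses exactly such an exponential factor, and more naive ``colour each element of $X$ by which $S_i$ contains it'' counting loses a factor $((k+2)/2)^n$). What makes it work is the interplay between the factor $(|\mathcal{A}_{S_1}|/m)^{kt}$ — which is genuinely small precisely when $\mathcal{A}$ contains large sets, i.e.\ when $c_*$ is large — and the inductive estimate for $h_{K_k\otimes t}(H[\mathcal{A}_{S_1}])$ — which is good precisely when $\mathcal{A}_{S_1}$ is small or supported on a small ground set; the entropy estimate on set sizes is exactly the device that links these two regimes. (Note also that the bound is only nontrivial when $\delta t>1$; for $\delta t\le 1$ it holds vacuously, which is why one may afford to take $t$ large in the estimates.) This balancing is the substance of Lemma 4.2 of Alon--Frankl \cite{alon}, which the argument essentially reproduces.
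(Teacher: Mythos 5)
The paper's proof (following Alon--Frankl's Lemma 4.2) is a direct, one-shot probabilistic argument with no induction. One picks $(k+1)t$ elements of $\mathcal{A}$ uniformly at random, sets $U_i=\bigcup_{j}A_i^{(j)}$, observes that pairwise disjointness of $U_1,\dots,U_{k+1}$ forces $|U_i|\le n/(k+1)$ for some $i$, then bounds, for fixed $i$,
\[
\Pr\bigl(|U_i|\le n/(k+1)\bigr)\le\sum_{|S|\le n/(k+1)}\bigl(2^{|S|}/m\bigr)^{t}\le 2^{n}\bigl(2^{n/(k+1)}/m\bigr)^{t}=2^{-n(\delta t-1)},
\]
and finishes with a union bound over $i$. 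There is no recursion and no need for any lower bound on set sizes in $\mathcal{A}$.

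Your recursion
$h_{K_{k+1}\otimes t}(H[\mathcal{A}])=\mathbb{E}_{S_1}\bigl[(|\mathcal{A}_{S_1}|/m)^{kt}\,h_{K_k\otimes t}(H[\mathcal{A}_{S_1}])\bigr]$
is correct, but the proposed way of closing the induction --- the entropy estimate on set sizes --- does not work, and you admit you have not actually done it. Concretely, the inductive hypothesis gives $h_{K_k\otimes t}(H[\mathcal{A}_{S_1}])\le k\cdot 2^{n'}\bigl(2^{n'/k}/m'\bigr)^{t}$ with $n'=n-|S_1|$, $m'=|\mathcal{A}_{S_1}|$, so the integrand is $k\,m'^{(k-1)t}2^{n'(1+t/k)}/m^{kt}$. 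Bounding $m'\le m$, this is at most $(k+1)2^{-n(\delta t-1)}$ exactly when $|S_1|\ge n/(k+1)$; but the threshold your entropy bound supplies is $|S_1|\gtrsim H^{-1}(\delta+1/(k+1))\,n$, which, in the regime $\delta\approx 1/t$ where the lemma is non-vacuous but barely, is far below $n/(k+1)$ (for $k=2$ and $\delta\approx 0$, about $0.06n$ versus the needed $n/3$), and over the intermediate range of $|S_1|$ the integrand exceeds the target by $2^{cn}$ with $c>0$. The correct cut is at $|S_1|=n/(k+1)$, and controlling the event $|S_1|\le n/(k+1)$ is precisely the single estimate the paper makes --- so the recursion does not avoid the paper's key step but merely repackages it, at the cost of inductive bookkeeping. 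Your closing remark that this argument ``essentially reproduces'' Alon--Frankl's Lemma 4.2 is also off: that proof is the direct one above, not an induction on the number of parts.
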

\begin{proof}
We follow the proof of Alon and Frankl cited above. Choose \((k+1)t\) members
of \(\mathcal{A}\) uniformly at random with replacement, \((A_{i}^{(j)})_{1
\leq i \leq k+1,\ 1 \leq j \leq t}\). The homomorphism density of \(K_{k+1}
\otimes t\) in \(H[\mathcal{A}]\) is precisely the probability that the unions
\[U_{i} = \bigcup_{j=1}^{t}A_{i}^{(j)}\]
are pairwise disjoint. If this event occurs, then \(|U_{i}| \leq n/(k+1)\) for
some \(i\). For each \(i \in [k]\), we have
\begin{align*}
\textrm{Pr}\{|U_{i}| \leq n/(k+1)\} & = \textrm{Pr}\left(\bigcup_{S \subset
X:|S| \leq n/(k+1)} \left(\bigcap_{j=1}^{t} \{A_{i}^{(j)} \subset
S\}\right)\right)\\
& \leq \sum_{|S| \leq n/(k+1)} \textrm{Pr}\left(\bigcap_{j=1}^{t}
\{A_{i}^{(j)} \subset S\}\right)\\
& = \sum_{|S| \leq n/(k+1)} (2^{|S|}/m)^{t}\\
& \leq 2^{n} (2^{n/(k+1)}/m)^{t} \\
& = 2^{-n(\delta t-1)}.
\end{align*}
Hence,
\[\textrm{Pr}\left(\bigcup_{i=1}^{k} \{|U_{i}| \leq n/(k+1)\}\right) \leq
\sum_{i=1}^{k} \textrm{Pr}\{|U_{i}| \leq n/(k+1)\} \leq (k+1)2^{-n(\delta
t-1)}.\]
Therefore,
\[h_{K_{k+1} \otimes t}(H[\mathcal{A}]) \leq (k+1)2^{-n(\delta t-1)},\]
as required.
\end{proof}

From the trivial bound above, any \(k\)-generator \(\mathcal{G}\) has
\(|\mathcal{G}| \geq 2^{n/k}\), so \(\delta \geq 1/(k(k+1))\), and therefore,
choosing \(t=t_{k}:=2k(k+1)\), we see that
\[h_{K_{k+1} \otimes t_{k}}(H[\mathcal{G}]) \leq (k+1)2^{-n}.\]
Hence, by Lemma \ref{lemma:blowupcount},
\[h_{K_{k+1}}(H[\mathcal{G}]) \leq O_k\big(2^{-n/t_{k}^k}\big).\]
Therefore, by (\ref{eq:denshomo}),
\begin{equation}
\label{eq1}
d_{K_{k+1}}(H[\mathcal{G}]) \leq O_k\big(2^{-n/t_{k}^k}\big) \leq 2^{-a_{k}n}
\end{equation}
provided \(n\) is sufficiently large depending on \(k\), where \(a_{k} > 0\)
depends only on \(k\).

Assume now that \(n\) is a multiple of \(k\), so that \(|\mathcal{F}_{n,k}| =
k2^{n/k}-k\). We will prove the following `stability' result.

\begin{proposition}
\label{prop:kstab}
Let \(k \in \mathbb{N}\) be fixed. If \(n\) is a multiple of \(k\), and \(\mathcal{G}
\subset \mathcal{P}X\) has \(|\mathcal{G}| \leq (1+\eta)|\mathcal{F}_{n,k}|\)
and \(k\)-generates at least \((1-\epsilon)2^{n}\) subsets of \(X\), then there
exists an equipartition \((S_i)_{i=1}^{k}\) of \(X\) such that
\[|\mathcal{G} \cap \left(\cup_{i=1}^{k} \mathcal{P}S_i\right)| \geq
(1-C_{k}\epsilon^{1/k} - D_{k}
\eta^{1/k} - 2^{-\xi_{k} n})|\mathcal{F}_{n,k}|,\]
where \(C_{k},D_{k},\xi_{k} > 0\) depend only on \(k\).
\end{proposition}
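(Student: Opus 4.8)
The plan is to follow the proof sketch laid out in the paper. First I would pass from the generating condition to a statement about clique counts in the Kneser graph $H$. Since $\mathcal{G}$ $k$-generates at least $(1-\epsilon)2^n$ subsets of $X$, and every such subset is a disjoint union of at most $k$ members of $\mathcal{G}$, each generated set gives rise to a clique of size at most $k$ in $H[\mathcal{G}]$ (after discarding $\emptyset$ and collapsing repetitions). Conversely, each $j$-clique in $H[\mathcal{G}]$ with $j \le k$ generates at most one subset of $X$ (namely the union of its vertices). Since $|\mathcal{G}| \le (1+\eta)|\mathcal{F}_{n,k}| = O_k(2^{n/k})$, the number of cliques of size strictly less than $k$ is $O_k(2^{n(k-1)/k}) = o(2^n)$, so in fact $K_k(H[\mathcal{G}]) \ge (1 - \epsilon - o(1))2^n$. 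On the other hand, $|\mathcal{G}| \le (1+\eta)k2^{n/k}$, and by the Kruskal--Katona-type or Zykov-symmetrisation bound together with the near-$K_{k+1}$-freeness from (\ref{eq1}), $K_k(H[\mathcal{G}])$ cannot exceed roughly $K_k(T_k(|\mathcal{G}|)) \le (|\mathcal{G}|/k)^k \le (1+\eta)^k 2^n$. Combining, $K_k(H[\mathcal{G}])$ is within a factor $1 \pm O_k(\epsilon + \eta)$ of $K_k(T_k(|\mathcal{G}|))$, i.e.\ $H[\mathcal{G}]$ is a graph on $m = |\mathcal{G}|$ vertices with $d_{K_{k+1}} \le 2^{-a_k n}$ and $K_k$-density close to the Tur\'an maximum.

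Next I would invoke the strengthened Simonovits-type stability result (which the paper promises to prove): a graph $G$ on $m$ vertices with $K_{k+1}$-density $o(1)$ and $K_k(G) \ge (1-\gamma)K_k(T_k(m))$ can be made $k$-partite by deleting at most $O_k(\gamma^{1/k}) m^2$ edges (the exponent $1/k$ is what eventually produces the $\epsilon^{1/k}, \eta^{1/k}$ in the statement, coming from the fact that $K_k$ is a degree-$k$ function of the partition sizes). Applying this to $H[\mathcal{G}]$ with $\gamma = O_k(\epsilon+\eta)$ yields a partition $\mathcal{G} = \mathcal{G}_1 \sqcup \cdots \sqcup \mathcal{G}_k$ such that all but $O_k((\epsilon+\eta)^{1/k}) m^2$ disjoint pairs lie across classes — equivalently, almost all disjoint pairs within a single $\mathcal{G}_i$ are absent, so each $\mathcal{G}_i$ is \emph{almost intersecting}. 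Moreover, since $m$ is close to $k2^{n/k}$ and $K_k(H[\mathcal{G}])$ is close to $2^n$, a convexity argument forces each $|\mathcal{G}_i|$ to be close to $2^{n/k} = 2^{n/k}$; more precisely $\big|\,|\mathcal{G}_i| - 2^{n/k}\big| \le O_k(\eta^{1/k} + (\epsilon+\eta)^{1/(2k)})2^{n/k}$.

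Then I would analyse each almost-intersecting family $\mathcal{G}_i$ of size $\approx 2^{n/k}$ separately. An intersecting family in $\mathcal{P}X$ has a unique maximal form $\{x : x \ni a\}$ for some fixed element only if it has size $2^{n-1}$, which is far too large; instead the relevant fact is that a family that is intersecting (or almost so) and generates, via disjoint unions with the other classes, a $\approx 2^{n/k}$-sized chunk of $\mathcal{P}X$, must essentially be contained in a power set $\mathcal{P}S_i$ for some $S_i$ with $|S_i| \approx n/k$. This is where the techniques from \cite{ellis} enter: one shows that since $\mathcal{G}_1 \times \cdots \times \mathcal{G}_k$ must cover almost all of $\mathcal{P}X$ by disjoint unions, and $|\mathcal{G}_i| \approx 2^{n/k}$, for almost all $k$-tuples $(x_1,\dots,x_k)$ with $x_i \in \mathcal{G}_i$ the $x_i$ are disjoint and $\bigcup x_i = X$; in particular the "ground sets" $S_i := \bigcup_{x \in \mathcal{G}_i} x$ satisfy $\bigcup S_i = X$ and, by a counting/entropy argument, must be almost disjoint with $|S_i| \approx n/k$. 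Replacing the $S_i$ by a genuine equipartition of $X$ refining this structure (and absorbing the small symmetric difference into the error terms), one concludes that all but a $O_k(\epsilon^{1/k} + \eta^{1/k} + 2^{-\xi_k n})$ fraction of $|\mathcal{F}_{n,k}|$ of the sets of $\mathcal{G}$ lie in $\bigcup_{i=1}^k \mathcal{P}S_i$, which is the desired conclusion.

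The main obstacle I expect is the last step: converting "$H[\mathcal{G}]$ is close to $k$-partite with balanced parts" into "$\mathcal{G}$ is close to living inside $\bigcup \mathcal{P}S_i$ for an honest equipartition." The combinatorial stability result only tells us the parts $\mathcal{G}_i$ are almost intersecting, and there is a large gap between "almost intersecting of size $2^{n/k}$" and "contained in a power set of an $(n/k)$-set" — many almost-intersecting families of that size are nothing like a power set. The leverage must come from using \emph{all} $k$ classes together and the covering requirement $\bigcup x_i = X$ for almost all tuples; making this quantitative, controlling how the errors compound across the $k$ classes, and extracting the equipartition $(S_i)$ with the stated $1/k$-power error bound is the delicate part, and is presumably where the bulk of the real work (and the machinery borrowed from \cite{ellis}) resides. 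The clique-counting reductions in the first two paragraphs, by contrast, are routine given (\ref{eq1}) and the promised Simonovits-type stability lemma.
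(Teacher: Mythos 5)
Your opening reduction is essentially the paper's: pass to clique counts in $H[\mathcal{G}]$, use \eqref{eq1} for near-$K_{k+1}$-freeness, show $K_k(H[\mathcal{G}])$ is within $1\pm O_k(\epsilon+\eta)$ of the Tur\'an value, then invoke the Simonovits-type stability result (Theorem~\ref{thm:makekpartite} plus Lemma~\ref{lemma:upperbound}) to extract a $k$-partite subgraph $G_0$ with nearly the same $K_k$-count, and use AM/GM on the class sizes. Up to that point there is no gap.

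The gap is exactly where you yourself flag it, and it is a genuine one: having a $k$-partition of $\mathcal{G}$ into almost-intersecting classes of size $\approx 2^{n/k}$ is not what the paper works with, and the ``counting/entropy argument'' you gesture at to pass from almost-intersecting classes to power sets of an equipartition is left entirely unspecified. You explicitly note that ``there is a large gap between `almost intersecting of size $2^{n/k}$' and `contained in a power set of an $(n/k)$-set''' --- and indeed you do not close it. The paper closes it with two concrete moves you don't have. First, it applies Lemma~\ref{lemma:highmindegree} to $G_0$ to pass to an \emph{induced} subgraph $H'$ with high \emph{minimum} degree (not merely high average degree) and classes $Y_1,\dots,Y_k$ of size $\approx |\mathcal{G}|/k$. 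Second, it sets $S_i = \bigcup_{y\in Y_i} y$ and proves the $S_i$ are \emph{exactly} pairwise disjoint, by a direct counting argument: if $p\in S_1\cap S_2$, then the min-degree condition forces almost every set in $\bigcup_{i\neq 1}Y_i$ (resp.\ $\bigcup_{i\neq 2}Y_i$) to avoid $p$, so only $O_k(\sqrt{\delta})m$ sets of $\mathcal{G}$ contain $p$; but then $\mathcal{G}$ can $k$-generate only $o(2^n)$ sets containing $p$, far fewer than the $\geq (1/2-\epsilon)2^n$ required. Once the $S_i$ are disjoint, $Y_i\subset\mathcal{P}S_i$ together with $|Y_i| > 2^{n/k-1}$ forces $|S_i|\geq n/k$, so equality holds and $(S_i)$ is automatically an honest equipartition; no ``absorbing the symmetric difference into error terms'' is needed. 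Without the min-degree reduction the disjointness argument does not go through (average-degree control alone permits a few vertices of $Y_i$ to carry stray elements that make $S_1\cap S_2$ nonempty without being detectable in the edge count), so the missing step is not a routine bookkeeping matter but the core of the proposition.

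One smaller point: your convexity claim that each $\big|\,|\mathcal{G}_i|-2^{n/k}\big| \leq O_k(\cdots)2^{n/k}$ is plausible in spirit, but the paper gets the needed balance for free from Lemma~\ref{lemma:highmindegree} via the bound $|Y_i|\leq |H'|-\delta(H')$, rather than from a separate convexity argument applied to the original partition of $\mathcal{G}$; the distinction matters because the original parts $\mathcal{G}_i$ are not the $Y_i$ that end up inside $\mathcal{P}S_i$.
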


We first collect some results used in the proof. We will need the following theorem of Erd\H{o}s \cite{erdos}.

\begin{theorem}[Erd\H{o}s]
\label{thm:erd}
If \(r \leq k\), and \(G\) is a \(K_{k+1}\)-free graph on \(n\) vertices, then
\[K_{r}(G) \leq K_{r}(T_{k}(n)).\]
\end{theorem}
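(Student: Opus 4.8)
The plan is to prove this classical fact (Erd\H{o}s's theorem on complete subgraphs) by Zykov-style symmetrisation; the case $r=2$ is exactly the usual proof of Tur\'an's theorem. I would split the argument into two halves: \textbf{(A)} show that \emph{some} $K_{k+1}$-free graph on $n$ vertices maximising $K_r$ is complete multipartite, and \textbf{(B)} show that among complete $s$-partite graphs on $n$ vertices with $s\le k$, the Tur\'an graph $T_k(n)$ maximises $K_r$ for every $r\le k$. Together these give the bound.

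For (A), the engine is a \emph{cloning} move: if $u\not\sim v$ in a graph $G$, let $G^{u\to v}$ be obtained by deleting every edge at $v$ and then joining $v$ to every neighbour of $u$, so that $u$ and $v$ become non-adjacent twins. Two facts are checked directly. First, if $G$ is $K_{k+1}$-free then so is $G^{u\to v}$: no clique contains both $u$ and $v$, and any clique through $v$ becomes a clique of the same size through $u$ on swapping $v$ for $u$. Second, writing $d_r(x)$ for the number of $r$-cliques of $G$ through $x$, and noting that $G$ and $G^{u\to v}$ agree away from $v$, one gets $K_r(G^{u\to v})-K_r(G)=d_r(u)-d_r(v)$. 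Now take $G$ to be a $K_{k+1}$-free graph on $n$ vertices with $K_r(G)$ as large as possible, and subject to that with $e(G)$ as large as possible. If $G$ is not complete multipartite there is a ``bad triple'' $u,v,w$ with $uv,vw\notin E(G)$ and $uw\in E(G)$. If $d_r(v)<d_r(u)$ (resp.\ $d_r(v)<d_r(w)$) then $G^{u\to v}$ (resp.\ $G^{w\to v}$) has strictly more $r$-cliques, contradicting maximality; so $d_r(v)\ge d_r(u),d_r(w)$. Cloning $v$ onto $u$ and onto $w$ simultaneously produces a $K_{k+1}$-free graph $G'$ with
\[
K_r(G')-K_r(G)=2d_r(v)-d_r(u)-d_r(w)+C,
\]
where $C\ge 0$ counts the $r$-cliques of $G$ through both $u$ and $w$; maximality of $K_r(G)$ forces $C=0$ and $d_r(u)=d_r(v)=d_r(w)$, so $G'$ is again a maximiser. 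Since $d_r(u)=d_r(v)$, the clones $G^{u\to v}$ and $G^{v\to u}$ are also maximisers, so maximality of $e(G)$ gives $d_G(u)=d_G(v)$ (ordinary degrees), and similarly $d_G(w)=d_G(v)$; a direct edge count then gives $e(G')=e(G)+1$, contradicting maximality of $e(G)$. Hence $G$ is complete multipartite, and being $K_{k+1}$-free it has at most $k$ parts.

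For (B), write the part sizes of such a $G$ as $n_1,\dots,n_s$ with $\sum n_i=n$; then $K_r(G)=e_r(n_1,\dots,n_s)$, the $r$-th elementary symmetric polynomial. If $s<r$ this is $0$ and we are done, so assume $r\le s\le k$. Two moves never decrease $e_r$: splitting a part of size $a\ge 2$ into parts $a_1+a_2=a$ adds $a_1a_2\,e_{r-2}(\text{other parts})\ge 0$; and, when there are exactly $k$ parts, replacing sizes $n_i\ge n_j+2$ by $n_i-1,n_j+1$ keeps the sum fixed, increases the product $n_in_j$, and hence adds a positive multiple of $e_{r-2}(\text{other }k-2\text{ parts})$, which is positive precisely because $r\le k$. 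Splitting until there are $k$ parts (or all parts are singletons, in the degenerate case $n\le k$) and then balancing produces $T_k(n)$, giving $K_r(G)\le K_r(T_k(n))$.

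The step I expect to be the main obstacle is the symmetrisation in (A) when every bad triple has all three $d_r$-values equal and no $r$-clique through both endpoints of the edge $uw$: there the primary inequality $K_r(G')\le K_r(G)$ is tight, and the contradiction has to be squeezed out of the secondary extremal choice (maximum number of edges), using the ``$+1$'' produced by the simultaneous double clone. For $r=2$ this situation never arises, since the edge $uw$ is itself an $r$-clique through $u$ and $w$ (so $C\ge 1$); this is why the classical Tur\'an proof needs no secondary optimisation. One also has to dispatch the small degenerate cases ($n\le k$, $r>n$, or $s<r$), where both sides of the claimed inequality are $0$ or trivially equal.
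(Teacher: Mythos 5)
The paper does not prove Theorem~\ref{thm:erd} at all: it is invoked as a black-box result of Erd\H{o}s \cite{erdos} (the result also goes back to Zykov), so there is no in-paper argument to compare against. Your Zykov-style symmetrisation proof is correct. The clone formula $K_r(G^{u\to v})-K_r(G)=d_r(u)-d_r(v)$, the double-clone formula $K_r(G')-K_r(G)=2d_r(v)-d_r(u)-d_r(w)+C$, and the preservation of $K_{k+1}$-freeness under (single or double) cloning all check out. The delicate point you flag is genuine and is handled correctly: for $r\ge 3$ the primary inequality can be tight, forcing $C=0$ and $d_r(u)=d_r(v)=d_r(w)$, and it is the secondary maximisation of $e(G)$ that closes the gap, since $d_r(u)=d_r(v)$ makes $G^{u\to v}$ and $G^{v\to u}$ both maximisers, whence $d(u)=d(v)=d(w)$ at the level of ordinary degrees, and the double clone then gains exactly one edge. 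Part (B) is the standard elementary-symmetric-polynomial reduction; the splitting increment $a_1a_2\,e_{r-2}(\text{rest})\ge 0$ and the balancing increment $(n_i-n_j-1)\,e_{r-2}(\text{rest})>0$ on $k-2$ positive parts (positive iff $r\le k$, with $r\le 1$ trivial) are correct, as is the dispatch of the degenerate cases $n\le k$ and $s<r$. In short, a complete and correct proof by one of the standard routes.
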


We will also need the following well-known lemma, which states that a dense \(k\)-partite graph has an induced subgraph with high minimum degree.

\begin{lemma}
\label{lemma:highmindegree}
Let \(G\) be an \(n\)-vertex, \(k\)-partite graph with
\[e(G) \geq (1-1/k-\delta)n^2/2.\]
Then there exists an induced subgraph \(G' \subset G\) with \(|G'| = n' \geq
(1-\sqrt{\delta}) n\) and
minimum degree \(\delta(G') \geq (1-1/k-\sqrt{\delta})(n'-1)\).
\end{lemma}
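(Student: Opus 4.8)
The plan is a routine iterative ``degree-cleaning'' argument: repeatedly delete vertices of too-small degree, and show the process must terminate after only a few deletions.

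\emph{The cleaning process.} Set \(G_0 := G\). Given \(G_i\) on \(n-i\) vertices, if some vertex \(v\) has \(\deg_{G_i}(v) < (1-1/k-\sqrt\delta)(n-1)\), delete it to form \(G_{i+1}\); otherwise halt and put \(G' := G_i\), \(n' := n-i\). (I deliberately use the \emph{original} \(n\) in the threshold, not the current size, which is slightly more convenient.) When the process halts, every vertex of \(G'\) has degree at least \((1-1/k-\sqrt\delta)(n-1) \geq (1-1/k-\sqrt\delta)(n'-1)\), which is exactly the desired minimum-degree bound. So it suffices to show that fewer than \(\sqrt\delta\,n\) vertices are ever deleted.

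\emph{Bounding the number of deletions.} Suppose at least \(m := \lceil \sqrt\delta\,n\rceil\) vertices are deleted, and let \(v_1,\dots,v_m\) be the first \(m\) of them, with \(v_i\) removed from \(G_{i-1}\). Every edge of \(G\) either lies inside the \(k\)-partite graph \(G_m\) (on \(n-m\) vertices), or meets \(\{v_1,\dots,v_m\}\), in which case charging it to its earliest-deleted endpoint \(v_i\) counts it in \(\deg_{G_{i-1}}(v_i) < (1-1/k-\sqrt\delta)(n-1)\). Using also the elementary bound that a \(k\)-partite graph on \(N\) vertices has at most \((1-1/k)N^2/2\) edges (convexity / Cauchy--Schwarz on the part sizes), we obtain
\[ e(G) \;\leq\; \sum_{i=1}^m \deg_{G_{i-1}}(v_i) + e(G_m) \;<\; m\,(1-1/k-\sqrt\delta)(n-1) + (1-1/k)\tfrac{(n-m)^2}{2}. \]
Substituting \(m \approx \sqrt\delta\,n\) and expanding, the right-hand side equals \((1-1/k)\tfrac{n^2}{2} - \delta n^2\big(1 - \tfrac12(1-1/k)\big) + O(n)\); since \(1 - \tfrac12(1-1/k) = \tfrac12 + \tfrac1{2k} > \tfrac12\), this is strictly less than \((1-1/k-\delta)\tfrac{n^2}{2} \leq e(G)\), once \(n\) is large enough to swamp the \(O(n)\) error from rounding \(m\) (as it is in every application of this lemma). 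This contradiction forces the process to delete at most \(\lceil\sqrt\delta\,n\rceil - 1 \leq \sqrt\delta\,n\) vertices, whence \(n' \geq (1-\sqrt\delta)n\), as required.

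\emph{Main obstacle.} There is nothing deep here — this is a well-known lemma — and the only point needing care is the arithmetic in the last step: one must verify that shrinking a \(k\)-partite graph's vertex set to \(\approx(1-\sqrt\delta)n\) costs, in the Tur\'an bound, strictly more edges than the removed low-degree vertices can ``explain'', with a surplus that beats the \(\tfrac12\delta n^2\) of slack the hypothesis on \(e(G)\) allows. Keeping the floors and ceilings honest (equivalently, taking \(n\) large in terms of \(\delta\), which always holds when we apply the lemma) is the only mild annoyance.
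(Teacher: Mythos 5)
Your proof is correct and follows essentially the same iterative degree-cleaning argument as the paper's. The only minor difference is that you fix the degree threshold at \((1-1/k-\sqrt{\delta})(n-1)\) using the original \(n\), whereas the paper lets the threshold decay with the current graph's order as \((1-1/k-\sqrt{\delta})(n-i)\); with the decaying threshold the arithmetic closes exactly (one gets \(\sqrt{\delta}(1-\alpha)^2\geq\sqrt{\delta}-\delta\) cleanly, hence \(\alpha\leq\sqrt{\delta}\)) with no appeal to \(n\) being large, which is harmless here since the lemma is only applied for large \(n\) anyway.
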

\begin{proof}
We perform the following algorithm to produce \(G'\). Let \(G_{1} = G\).
Suppose that at stage \(i\), we have a graph \(G_{i}\) on \(n-i+1\) vertices.
If there is a vertex \(v\) of \(G_{i}\) with \(d(v) < (1-1/k-\eta)(n-i)\), let
\(G_{i+1} = G_{i} - v\); otherwise, stop and set \(G' = G_{i}\). Suppose the
process terminates after \(j = \alpha n\) steps. Then we have removed at most
\[(1-1/k-\eta) \sum_{i=1}^{j} (n-i) = (1-1/k-\eta)\left({n \choose 2} - {n-j
\choose 2}\right)\]
edges, and the remaining graph has at most
\[{k \choose 2}\left(\frac{n-j}{k}\right)^2 = (1-\alpha)^2(1-1/k)n^2/2\]
edges. But our original graph had at least
\[(1-1/k-\delta)n^2/2\]
edges, and therefore
\[(1-1/k-\eta)(1-(1-\alpha)^{2})n^2/2 + (1-\alpha)^{2} (1-1/k)n^2/2 \geq
(1-1/k-\delta)n^2/2,\]
so
\[\eta(1-\alpha)^{2} \geq \eta - \delta.\]
Choosing \(\eta = \sqrt{\delta}\), we obtain
\[\eta(1-\alpha)^{2} \geq \eta(1-\eta),\]
and therefore
\[(1-\alpha)^{2} \geq 1-\eta,\]
so
\[\alpha \leq 1-(1-\eta)^{1/2} \leq \eta.\]
Hence, our induced subgraph \(G'\) has order
\[|G'| = n' \geq (1-\sqrt{\delta})n,\]
and minimum degree
\[\delta(G') \geq (1-1/k-\sqrt{\delta})(n'-1).\]
\end{proof}

We will also need Shearer's Entropy Lemma.

\begin{lemma}[Shearer's Entropy Lemma, \cite{cgfs}]
\label{thm:shearer}
Let \(S\) be a finite set, and let \(\mathcal{A}\) be an {\em \(r\)-cover} of \(S\), meaning a collection of subsets of \(S\) such that every element of
\(S\) is contained in at least \(r\) sets in \(\mathcal{A}\). Let
\(\mathcal{F}\) be a collection of subsets of \(S\). For \(A \subset S\), let
\(\mathcal{F}_{A} = \{F \cap A:\ F \in \mathcal{F}\}\) denote the {\em
projection} of \(\mathcal{F}\) onto the set \(A\). Then
\[|\mathcal{F}|^{r} \leq \prod_{A \in \mathcal{A}}|\mathcal{F}_{A}|.\]
\end{lemma}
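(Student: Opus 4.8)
The plan is to deduce this from the information-theoretic form of Shearer's inequality, using the standard dictionary between counting and Shannon entropy. Throughout, \(H(\cdot)\) denotes binary entropy, and I will use three standard facts: (i) a random variable taking at most \(N\) values has \(H \leq \log_2 N\); (ii) the chain rule \(H(Z_1,\dots,Z_\ell) = \sum_{i=1}^{\ell} H(Z_i \mid Z_1,\dots,Z_{i-1})\); and (iii) conditioning does not increase entropy, i.e. \(H(Z \mid W, W') \leq H(Z \mid W)\), while every conditional entropy is non-negative.

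First, pick \(F\) uniformly at random from \(\mathcal{F}\), and encode it as the Boolean vector \((X_s)_{s \in S}\) with \(X_s = 1\) if \(s \in F\) and \(X_s = 0\) otherwise. Since this encoding is a bijection from \(\mathcal{F}\) onto a set of size \(|\mathcal{F}|\), we have \(H\big((X_s)_{s \in S}\big) = \log_2 |\mathcal{F}|\). For \(A \subseteq S\), write \(X_A = (X_s)_{s \in A}\); this is a function of \(F \cap A\) alone, so it takes at most \(|\mathcal{F}_A|\) distinct values, whence \(H(X_A) \leq \log_2 |\mathcal{F}_A|\) by fact (i).

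The heart of the argument is the inequality \(r \cdot H\big((X_s)_{s\in S}\big) \leq \sum_{A \in \mathcal{A}} H(X_A)\). Fix an arbitrary ordering \(s_1, \dots, s_m\) of \(S\) (so \(m = |S|\)), and for each \(i\) write \(X_{<i} = (X_{s_1},\dots,X_{s_{i-1}})\). For a fixed \(A \in \mathcal{A}\), applying the chain rule within the coordinates of \(A\) gives \(H(X_A) = \sum_{i:\, s_i \in A} H\big(X_{s_i} \mid (X_{s_j})_{j<i,\, s_j \in A}\big)\); since \(\{s_j : j < i,\ s_j \in A\}\) is a sub-tuple of \((s_1,\dots,s_{i-1})\), fact (iii) yields \(H\big(X_{s_i} \mid (X_{s_j})_{j<i,\, s_j \in A}\big) \geq H(X_{s_i} \mid X_{<i})\). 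Summing over all \(A \in \mathcal{A}\) and interchanging the order of summation,
\[\sum_{A \in \mathcal{A}} H(X_A) \;\geq\; \sum_{i=1}^{m} \big|\{A \in \mathcal{A}: s_i \in A\}\big| \cdot H(X_{s_i} \mid X_{<i}) \;\geq\; r \sum_{i=1}^{m} H(X_{s_i} \mid X_{<i}) \;=\; r \cdot H\big((X_s)_{s\in S}\big),\]
where the middle inequality uses that each summand \(H(X_{s_i} \mid X_{<i})\) is non-negative together with the fact that \(\mathcal{A}\) is an \(r\)-cover, and the final equality is the chain rule. Combining this with the two identities of the previous paragraph gives \(r\log_2|\mathcal{F}| \leq \sum_{A \in \mathcal{A}} \log_2 |\mathcal{F}_A| = \log_2 \prod_{A \in \mathcal{A}} |\mathcal{F}_A|\), and exponentiating completes the proof.

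I do not anticipate a genuine obstacle: the only slightly delicate point is keeping the conditioning sets straight in the chain-rule step, and making sure the pointwise inequality \(|\{A : s_i \in A\}| \geq r\) is applied to the non-negative quantities \(H(X_{s_i}\mid X_{<i})\). If one prefers to avoid entropy altogether, essentially the same bookkeeping can be carried out by an elementary induction on \(|S|\), peeling off one element of \(S\) and applying the Cauchy--Schwarz (or, in general, Hölder) inequality; but the entropy proof above is the cleanest route and is the one I would present.
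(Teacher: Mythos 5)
Your proof is correct and complete. Note that the paper does not actually prove this lemma: it is stated as a known result and attributed to Chung, Frankl, Graham and Shearer \cite{cgfs}, so there is no in-paper argument to compare against. The entropy proof you give is the standard one (and is essentially the argument in the cited source): encode a uniformly random member of $\mathcal{F}$ as its indicator vector, bound $H(X_A)$ by $\log_2|\mathcal{F}_A|$, and establish the core inequality $r\,H(X_S) \leq \sum_{A\in\mathcal{A}} H(X_A)$ via the chain rule combined with the monotonicity of conditional entropy under extra conditioning, using non-negativity of the conditional entropies to pass from the exact covering multiplicities $|\{A : s_i \in A\}|$ down to $r$. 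All the bookkeeping is handled correctly, including the subtle point that within each $A$ one conditions on the sub-tuple $(X_{s_j})_{j<i,\ s_j\in A}$ rather than on the full prefix. The alternative elementary route you mention (induction on $|S|$ with H\"older) would also work, but what you present is the cleanest and most standard argument.
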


In addition, we require two `stability' versions of Tur\'an-type results in extremal graph theory. The first states that a graph with a very small \(K_{k+1}\)-density cannot have \(K_{r}\)-density much higher than the \(k\)-partite Tur\'an graph on the same number of vertices, for any \(r \leq k\).

\begin{lemma}
\label{lemma:upperbound}
Let \(r \leq k\) be integers. Then there exist \(C, D> 0\) such that for any \(\alpha \geq 0\), any
\(n\)-vertex graph \(G\) with \(K_{k+1}\)-density at most \(\alpha\) has
\(K_{r}\)-density at most
\[\frac{k(k-1)\cdots(k-r+1)}{k^r}(1 + C\alpha^{1/(k+2)} + D/n).\]
\end{lemma}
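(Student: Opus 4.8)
The plan is to run a supersaturation / removal-type argument to reduce to the $K_{k+1}$-free case, and then invoke Erdős's theorem (Theorem \ref{thm:erd}). First I would fix $r \le k$ and the ambient $n$-vertex graph $G$, and set $\beta = \alpha^{1/(k+2)}$; the exponent $1/(k+2)$ is chosen precisely so that the following counting works. Consider a uniformly random subset $W \subset V(G)$ of size $w = \lfloor \beta n \rfloor$ (or handle this via an averaging over all $w$-subsets). The expected $K_{k+1}$-density of $G[W]$ equals the $K_{k+1}$-density of $G$, namely at most $\alpha$; more importantly, the expected number of $K_{k+1}$'s in $G[W]$ is at most $\alpha \binom{w}{k+1}$, which is $o(w)$ once $\alpha$ is small relative to $\beta^{k+1}$ — and indeed $\alpha = \beta^{k+2} = o(\beta^{k+1})$. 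So the expected number of copies of $K_{k+1}$ in $G[W]$ is much smaller than $w$, hence we can delete one vertex from each such copy and still retain most of $W$.

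Concretely, I would argue: by averaging there is a choice of $W$ with $|W| = w$ in which $G[W]$ contains at most, say, $2\alpha \binom{w}{k+1} \le w/2$ copies of $K_{k+1}$ (for $n$ large); deleting one vertex per copy gives a $K_{k+1}$-free induced subgraph $G'$ on $w' \ge w/2$ vertices. By Theorem \ref{thm:erd}, $K_r(G') \le K_r(T_k(w')) \le \binom{w'}{r} \tfrac{k(k-1)\cdots(k-r+1)}{k^r}$. Now I run this in reverse to bound $K_r(G)$: the key identity is that the $K_r$-density of $G$ equals the expectation, over a uniform random $r$-subset $R$ of $V(G)$, of the indicator that $R$ spans a clique, and one can relate this to the $K_r$-density inside random $w$-subsets. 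Precisely, for a uniform random $w$-set $W$, the expected $K_r$-density of $G[W]$ equals the $K_r$-density of $G$ (this is the standard fact that induced densities are preserved in expectation under random subsampling). So if the $K_r$-density of $G$ were larger than $\tfrac{k\cdots(k-r+1)}{k^r}(1 + C\beta + D/n)$, then for a positive fraction of choices of $W$, $G[W]$ would have $K_r$-density exceeding $\tfrac{k\cdots(k-r+1)}{k^r}(1 + C\beta/2)$; but after deleting $\le w/2$ vertices to kill all $K_{k+1}$'s we lose at most a $\binom{w}{r} - \binom{w'}{r} \le (1 - (1/2)^r)\binom{w}{r}$-type fraction — wait, that loss is too large, so I must instead delete far fewer vertices. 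This is the real point: I need $w$ chosen so that $G[W]$ has $o(w)$ copies of $K_{k+1}$, i.e. I want $\alpha \binom{w}{k+1} \ll w$, which needs $w \ll \alpha^{-1/k}$; combined with wanting $w \gg 1/\beta$-ish for the density statement to have teeth, the balance $w \asymp \alpha^{-1/(k+1)}$ or so forces the exponent. Let me restate cleanly: choose $w = \lceil \alpha^{-1/(k+2)} \rceil$ (assuming $\alpha \ge n^{-(k+2)}$, else the $D/n$ term dominates and the bound is trivial from $K_r(G) \le \binom{n}{r}$ handled separately). Then the expected number of $K_{k+1}$'s in $G[W]$ is $\le \alpha \binom{w}{k+1} \le \alpha w^{k+1} = \alpha^{1/(k+2)} w = \beta w$, so for a random $W$ we can delete $\le 2\beta w$ vertices in expectation to destroy all copies, leaving $G'$ on $w' \ge (1-2\beta)w$ vertices which is $K_{k+1}$-free. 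Then $K_r(G[W]) \le K_r(G') + (\text{copies through deleted vertices}) \le \binom{w'}{r}\tfrac{k\cdots(k-r+1)}{k^r} + 2\beta w \cdot \binom{w}{r-1}$, and dividing by $\binom{w}{r}$ gives $d_{K_r}(G[W]) \le \tfrac{k\cdots(k-r+1)}{k^r} + O(\beta)$ (absorbing the $\binom{w'}{r}/\binom{w}{r} \le 1$ and the error term $2\beta w \binom{w}{r-1}/\binom{w}{r} = O(\beta r)$). Averaging over $W$ and using that $\mathbb{E}\, d_{K_r}(G[W]) = d_{K_r}(G)$ yields $d_{K_r}(G) \le \tfrac{k(k-1)\cdots(k-r+1)}{k^r}(1 + C\alpha^{1/(k+2)})$ for suitable $C = C(k,r)$, and the $D/n$ correction covers the regime $\alpha < n^{-(k+2)}$ and the rounding of $w$.

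The main obstacle I anticipate is the bookkeeping in the averaging step — specifically verifying that $\mathbb{E}_W\, d_{K_r}(G[W])$ is \emph{exactly} $d_{K_r}(G)$ (true, since a uniform random $r$-subset of a uniform random $w$-subset is a uniform random $r$-subset of $V(G)$), and then correctly propagating the per-$W$ bound, which holds only in expectation over $W$, back to a bound on $d_{K_r}(G)$ by linearity. One must be careful that "delete one vertex per $K_{k+1}$" is applied to each $W$ in a way compatible with linearity of expectation over $W$; the cleanest route is to bound, for \emph{every} $W$, $K_r(G[W]) \le K_r(T_k(w)) + (\text{number of } K_{k+1}\text{'s in } G[W]) \cdot w^{r-1}$ — since removing one vertex from each $K_{k+1}$ kills at most that many $K_r$'s and produces a $K_{k+1}$-free graph to which Theorem \ref{thm:erd} applies — and then take expectations, where the $K_{k+1}$-count has expectation $\le \alpha\binom{w}{k+1}$. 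This deterministic-per-$W$ inequality sidesteps the subtlety entirely. A secondary nuisance is the choice of $w$ and the treatment of $\alpha$ very close to $0$ or $n$ very small relative to $\alpha^{-1}$, but those are routine case splits absorbed into the $D/n$ term.
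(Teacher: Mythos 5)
Your proof is correct and essentially matches the paper's argument: both sample a uniform random $w$-subset with $w \approx \min\{\alpha^{-1/(k+2)},n\}$, use the identity $\mathbb{E}_W\,d_{K_r}(G[W]) = d_{K_r}(G)$, and invoke Erd\H{o}s's Theorem~\ref{thm:erd}. The only presentational difference lies in controlling the error coming from copies of $K_{k+1}$: the paper dichotomizes $w$-subsets into those containing a $K_{k+1}$ and those not (using the trivial bound $d_{K_r}\le 1$ for the former), whereas your cleaner final version establishes the deterministic per-$W$ inequality $K_r(G[W]) \le K_r(T_k(w)) + K_{k+1}(G[W])\,w^{r-1}$ before taking expectations --- both choices yield the same exponent $1/(k+2)$.
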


\begin{proof}
We use a straightforward sampling argument. Let \(G\) be as in the statement of
the lemma. Let \(\zeta {n \choose l}\) be the number of \(l\)-subsets \(U
\subset V(G)\) such that \(G[U]\) contains a copy of \(K_{k+1}\), so that
\(\zeta\) is simply the probability that a uniform random \(l\)-subset of
\(V(G)\) contains a \(K_{k+1}\). Simple counting (or the union bound) gives
\[\zeta \leq {l \choose k+1} \alpha.\]

By Theorem \ref{thm:erd}, each \(K_{k+1}\)-free \(G[U]\) contains
at most
\[{k \choose r}\left(\frac{l}{k}\right)^{r}\]
\(K_{r}\)'s. Therefore, the density of \(K_{r}\)'s in each such \(G[U]\)
satisfies
\begin{align}
\label{eq:upperbd}
d_{K_{r}}(G[U]) & \leq \frac{k(k-1)\cdots(k-r+1)}{k^r}
\frac{l^r}{l(l-1)\cdots(l-r+1)}\nonumber \\
& \leq \frac{k(k-1)\cdots(k-r+1)}{k^r}(1+O(1/l)).
\end{align}
Note that one can choose a random \(r\)-set in graph $G$ by first
choosing a random \(l\)-set $U$, and then choosing a random \(r\)-subset of
$U$. The density of \(K_r\)'s in \(G\) is simply the probability that a uniform
random \(r\)-subset of \(V(G)\) induces a \(K_r\), and therefore
\[d_{K_{r}}(G) = \mathbb{E}_{U} [d_{K_{k}}(G[U])],\]
where the expectation is taken over a uniform random choice of \(U\). If \(U\)
is \(K_{k+1}\)-free, which happens with probability $1-\zeta$, we use the upper
bound (\ref{eq:upperbd}); if \(U\) contains a \(K_{k+1}\), which happens with
probability \(\zeta\), we use the trivial bound \(d_{K_{k}}(G[U]) \leq 1\). We
see that the density of \(K_{r}\)'s in \(G\) satisfies:
\begin{align*}
d_{K_{r}}(G) & \leq (1-\zeta) \frac{k(k-1)\cdots(k-r+1)}{k^r}(1+O(1/l)) +
\zeta\\
& \leq \frac{k(k-1)\cdots(k-r+1)}{k^r}+ O(1/l) + {l \choose k+1} \alpha\\
& \leq \frac{k(k-1)\cdots(k-r+1)}{k^r} + O(1/l) + l^{k+1} \alpha.
\end{align*}
Choosing \(l = \min\{\lfloor \alpha^{-1/(k+2)} \rfloor,n\}\) proves the lemma.
\end{proof}

The second result states that an \(n\)-vertex graph with a small \(K_{k+1}\)-density, a \(K_{k}\)-density not too much less than that of \(T_{k}(n)\), and a \(K_{k-1}\)-density not too much more than that of \(T_{k}(n)\), can be made into a \(k\)-partite graph by the removal of only a small number of edges.

\begin{theorem}
\label{thm:makekpartite}
Let \(G\) be an \(n\)-vertex graph with \(K_{k+1}\)-density at most \(\alpha\),
\(K_{k-1}\)-density at most
\[(1+\beta)\frac{k!}{k^{k-1}},\]
and \(K_{k}\)-density at least
\[(1-\gamma)\frac{k!}{k^{k}},\]
where \(\gamma \leq 1/2\). Then \(G\) can be made into a \(k\)-partite graph
\(G_{0}\) by removing at most
\[\left(2\beta+2\gamma+\frac{8k^{k+1}(k+1)}{k!}\sqrt{\alpha}+2k/n\right){n
\choose 2}\]
edges, which removes at most
\[\left(2\beta+2\gamma+\frac{8k^{k+1}(k+1)}{k!}\sqrt{\alpha}+2k/n\right){k
\choose 2}{n \choose k}\]
\(K_{k}\)'s.
\end{theorem}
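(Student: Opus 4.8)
The plan is to adapt the randomised-colouring proof of the Simonovits stability theorem, but to run it using the number of copies of $K_{k}$ and $K_{k-1}$ in $G$ rather than the number of edges. Since the hypothesis $\gamma\le\tfrac12$ forces $K_{k}(G)\ge\tfrac12\tfrac{k!}{k^{k}}\binom{n}{k}>0$, I would begin by choosing a copy $Q=\{u_{1},\dots,u_{k}\}$ of $K_{k}$ in $G$ uniformly at random, and building a random $[k]$-colouring $c$ of $V(G)$ as follows: set $c(u_{i})=i$, and for each $v\notin Q$ put $M(v)=\{i:u_{i}v\notin E(G)\}$ and let $c(v)$ be a uniformly random element of $M(v)$, independently over all such $v$. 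Here $M(v)\ne\emptyset$ because $G$ is $K_{k+1}$-free, so $v$ is not joined to all of $Q$; likewise $|N(v)\cap Q|\le k-1$ for every $v\notin Q$. No edge meeting $Q$ is monochromatic: $c(u_{i})=i\ne j=c(u_{j})$, and if $vu_{i}\in E(G)$ then $i\notin M(v)$, so $c(v)\ne i$. The key point is that if $vw\in E(G)$ with $v,w\notin Q$ and $|M(v)|=|M(w)|=1$, then $M(v)\cap M(w)=\emptyset$, for otherwise $\{v,w\}\cup(Q\setminus\{u_{i}\})$ (where $M(v)=M(w)=\{i\}$) would be a copy of $K_{k+1}$ in $G$. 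Hence, conditioning on $Q$,
\[
\Pr[c(v)=c(w)]=\frac{|M(v)\cap M(w)|}{|M(v)|\,|M(w)|}\le \mathbf{1}\big[|M(v)|\ge 2\big]+\mathbf{1}\big[|M(w)|\ge 2\big],
\]
so summing over edges inside $V(G)\setminus Q$, the expected number of monochromatic edges given $Q$ is at most $(n-1)\,\#\{v\notin Q:|M(v)|\ge 2\}$.

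It then remains to show that for a typical $Q$, almost every $v\notin Q$ has $|M(v)|=1$, i.e.\ is joined to exactly $k-1$ of the $u_{i}$; equivalently, since $\#\{v\notin Q:|M(v)|\ge 2\}=(n-k)-\#\{v\notin Q:|N(v)\cap Q|=k-1\}$, it suffices to bound the latter count below. I would count the pairs $(Q,v)$ with $v\notin Q$ and $|N(v)\cap Q|=k-1$: such a pair corresponds bijectively to a triple $(S,u,v)$, where $S$ is a copy of $K_{k-1}$ in $G$, $u$ and $v$ are distinct vertices of the common neighbourhood $N(S)$ with $uv\notin E(G)$, and $Q=S\cup\{u\}$ (with $S=N(v)\cap Q$ and $\{u\}=Q\setminus S$ in the reverse direction). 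Writing $a_{S}=|N(S)|$ and $e_{S}=e(G[N(S)])$, the number of such triples is $\sum_{S}\big(a_{S}(a_{S}-1)-2e_{S}\big)=\sum_{S}a_{S}^{2}-k\,K_{k}(G)-2\binom{k+1}{2}K_{k+1}(G)$, where I use the standard double-counting identities $\sum_{S}a_{S}=k\,K_{k}(G)$ and $\sum_{S}e_{S}=\binom{k+1}{2}K_{k+1}(G)$; Cauchy--Schwarz gives $\sum_{S}a_{S}^{2}\ge \big(k\,K_{k}(G)\big)^{2}/K_{k-1}(G)$. Dividing by $K_{k}(G)$ and feeding in the three density hypotheses, and using $\binom{n}{k}/\binom{n}{k-1}=(n-k+1)/k$, $\binom{n}{k+1}/\binom{n}{k}=(n-k)/(k+1)$, $\tfrac{1-\gamma}{1+\beta}\ge 1-\beta-\gamma$ and $\tfrac{1}{1-\gamma}\le 2$ (as $\gamma\le\tfrac12$), a short computation gives
\[
\mathbb{E}_{Q}\,\#\{v\notin Q:|M(v)|\ge 2\}\le (\beta+\gamma)\,n+k+\tfrac{2k^{k+1}}{k!}\,\alpha n.
\]

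Combining the two estimates, the expected number of monochromatic edges of $c$ is at most $(n-1)\big((\beta+\gamma)n+k+\tfrac{2k^{k+1}}{k!}\alpha n\big)=\big(2\beta+2\gamma+\tfrac{2k}{n}+\tfrac{4k^{k+1}}{k!}\alpha\big)\binom{n}{2}$, using $n(n-1)=2\binom{n}{2}$. We may assume $\alpha\le 1$ (otherwise $\tfrac{8k^{k+1}(k+1)}{k!}\sqrt{\alpha}\ge 1$ and the claimed bound already exceeds $\binom{n}{2}\ge e(G)$, so deleting every edge works), and then $\tfrac{4k^{k+1}}{k!}\alpha\le\tfrac{8k^{k+1}(k+1)}{k!}\sqrt{\alpha}$, so this is at most the bound in the statement. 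Fixing a colouring attaining the expectation and deleting its monochromatic edges yields a $k$-partite graph $G_{0}$; and since every edge of $G$ lies in at most $\binom{n-2}{k-2}$ copies of $K_{k}$ and $\binom{n}{2}\binom{n-2}{k-2}=\binom{k}{2}\binom{n}{k}$, the number of $K_{k}$'s destroyed is within the stated bound.

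I expect the main obstacle to be the counting step of the second paragraph: pinning down the bijection between pairs $(Q,v)$ with $|N(v)\cap Q|=k-1$ and triples $(S,u,v)$, and then making every inequality point the right way when the \emph{upper} bound on $K_{k-1}(G)$ is fed into the Cauchy--Schwarz estimate $\sum_{S}a_{S}^{2}\ge(\sum_{S}a_{S})^{2}/K_{k-1}(G)$ and combined with the correction term $\sum_{S}e_{S}=\binom{k+1}{2}K_{k+1}(G)$ — this is exactly where all three hypotheses of the theorem enter, and where the constants in the statement get fixed. The colouring construction and the observation that two adjacent vertices with singleton $M$-sets have disjoint $M$-sets are the conceptual core, but they are short; the remaining work is careful bookkeeping.
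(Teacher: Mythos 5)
Your proposal has a genuine gap: it repeatedly invokes ``$G$ is $K_{k+1}$-free,'' but the theorem assumes only that the $K_{k+1}$-density is at most $\alpha$, which may be strictly positive. This breaks the argument in two places. First, $M(v)$ can be empty --- this happens exactly when $Q\cup\{v\}$ is a copy of $K_{k+1}$ --- and then $v$ has no valid colour. Second, the key inequality $\Pr[c(v)=c(w)]\le\mathbf{1}\bigl[|M(v)|\ge2\bigr]+\mathbf{1}\bigl[|M(w)|\ge2\bigr]$ fails whenever $vw\in E(G)$, $|M(v)|=|M(w)|=1$ and $M(v)=M(w)=\{i\}$: then the left-hand side is $1$, the right-hand side is $0$, and $\{v,w\}\cup(Q\setminus\{u_i\})$ is a copy of $K_{k+1}$, which is allowed by the hypotheses. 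These are precisely the monochromatic edges coming from $K_{k+1}$'s, and your estimate silently discards them, which is why no $\sqrt{\alpha}$-type term appears in your expected monochromatic-edge count.

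The fix has to control the $K_{k+1}$'s explicitly. In your randomised framework this can be patched: the expected number over $Q$ of vertices with $M(v)=\emptyset$ is $(k+1)K_{k+1}(G)/K_k(G)=O_k(\alpha n)$, and the expected number of edges $vw$ with $M(v)=M(w)$ a common singleton is $O_k(\alpha n^2)$, both obtained from $K_{k+1}(G)\le\alpha\binom{n}{k+1}$ and $K_k(G)\ge\tfrac12\tfrac{k!}{k^k}\binom{n}{k}$ (using $\gamma\le\tfrac12$); since $\alpha\le\sqrt{\alpha}$ for $\alpha\le1$, these fit inside the stated $\tfrac{8k^{k+1}(k+1)}{k!}\sqrt{\alpha}$ term. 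The paper instead works deterministically: it selects a single ``good'' $K_k$ $S$ --- one that is not in too many $K_{k+1}$'s and whose $(k-1)$-subsets are not in too many $K_{k+1}$'s --- shows via Markov that such $S$ with $f_G(S)$ near its mean exists, and then deletes the edges inside each $N(T_i)$ (which are few precisely because $T_i$ is not ``dangerous'') together with edges meeting the small set of leftover vertices. Aside from this omission your bijective count of pairs $(Q,v)$ with $|N(v)\cap Q|=k-1$ and the Cauchy--Schwarz lower bound on $\sum_S a_S^2$ exactly mirror the paper's computation of $\mathbb{E}f_G(S)\ge\tfrac{1-\gamma}{1+\beta}(n-k+1)$, so the core is the same argument; what is missing is the step that turns a small $K_{k+1}$-density into control of the colouring, and that step is where all the $\sqrt{\alpha}$-dependence in the theorem's conclusion originates.
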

\begin{proof}
If \(k \in \mathbb{N}\), and \(G\) is a graph, let
\[\mathcal{K}_{k}(G) = \{S
\in V(G)^{(k)}:\ G[S] \textrm{ is a clique}\}\]
denote the set of all \(k\)-sets
that induce a clique in \(G\). If \(S \subset
V(G)\), let \(N(S)\) denote the set of vertices of \(G\) joined to all vertices
in \(S\), i.e. the intersection of the neighbourhoods of the vertices in \(S\), and let
\(d(S) =
| N(S)|\). For \(S \in \mathcal{K}_{k}(G)\),
let \[f_{G}(S) = \sum_{T \subset S, |T|=k-1} d(T).\]
We begin by sketching the proof. The fact that the ratio
between the \(K_{k}\)-density of \(G\) and the \(K_{k-1}\)-density of \(G\) is
very close to \(1/k\) will imply that the average
\(\mathbb{E}f_{G}(S)\) over all sets \(S \in \mathcal{K}_{k}(G)\) is not too
far below \(n\). The fact that the \(K_{k+1}\)-density of \(G\) is small will
mean that for most sets \(S \in
\mathcal{K}_{k}(G)\), every \((k-1)\)-subset \(T \subset S\) has \(N(T)\)
spanning few edges of \(G\), and any two distinct \((k-1)\)-subsets \(T,T'
\subset S\) have
\(|N(T) \cap N(T')|\) small.
Hence, if we pick such a set \(S\) which has \(f_{G}(S)\) not too far below
the average, the sets \(\{N(T):\ T \subset S, |T|=k-1\}\) will be almost
pairwise disjoint, will cover most of the vertices of
\(G\), and will each span few
edges of \(G\). Small alterations will produce a \(k\)-partition of \(V(G)\)
with few edges of \(G\) within each class, proving the theorem.

We now proceed with the proof. Observe that
\begin{align*}
\mathbb{E}f_{G} & = \frac{\sum_{S \in \mathcal{K}_{k}(G)} \sum_{T \subset S,
| T|=k-1} d(T)}{K_{k}(G)} \\
 & = \frac{\sum_{T \in \mathcal{K}_{k-1}(G)} d(T)^2}{K_{k}(G)}\\
& \geq \frac{\left( \sum_{T \in \mathcal{K}_{k-1}(G)}
d(T)\right)^2}{K_{k-1}(G)K_{k}(G)}\\
& = \frac{(kK_{k}(G))^2}{K_{k-1}(G)K_{k}(G)}\\
& = k^2\frac{K_{k}(G)}{K_{k-1}(G)}\\
& \geq k^2 (1-\gamma) \frac{k!}{k^k} \frac{1}{1+\beta} \frac{k^{k-1}}{k!}
\frac{{n \choose k}}{{n \choose k-1}}\\
& = \frac{1-\gamma}{1+\beta}(n-k+1).
\end{align*}
(The first inequality follows from Cauchy-Schwarz, and the second from our assumptions on the \(K_{k}\)-density and the \(K_{k-1}\)-density of \(G\).)

We call a set \(T \in \mathcal{K}_{k-1}(G)\) \textit{dangerous} if it is contained in at
least \(\sqrt{\alpha} {n-k+1 \choose 2}\) \(K_{k+1}\)'s. Let \(D\) denote the number of dangerous \((k-1)\)-sets. Double-counting the number of times a \((k-1)\)-set is contained in a \(K_{k+1}\), we obtain:
\[D \sqrt{\alpha} {n-k+1 \choose 2} \leq {k+1 \choose 2} \alpha {n \choose k+1},\]
since there are at most \(\alpha {n \choose k+1}\) \(K_{k+1}\)'s in \(G\). Hence,
\[D \leq \sqrt{\alpha} {n \choose k-1}.\]

Similarly, we call a set \(S \in \mathcal{K}_{k}(G)\) \textit{treacherous} if it is contained in
at least \(\sqrt{\alpha} (n-k)\) \(K_{k+1}\)'s. Double-counting the number of times a \(k\)-set is contained in a \(K_{k+1}\), we see that there are at most \(\sqrt{\alpha} {n \choose k}\) treacherous \(k\)-sets.

Call a set \(S \in \mathcal{K}_{k}(G)\) {\em bad} if it is treacherous, or
contains at least one dangerous \((k-1)\)-set; otherwise, call \(S\) {\em good}. Then
the number of bad \(k\)-sets is at most
\[\sqrt{\alpha}{n \choose k}+(n-k+1)\sqrt{\alpha} {n \choose k-1} =
(k+1)\sqrt{\alpha} {n \choose k},\]
so the fraction of sets in \(\mathcal{K}_{k}(G)\) which are bad is at most
\[\frac{(k+1)\sqrt{\alpha}}{(1-\gamma)\frac{k!}{k^{k}}} =
\frac{k^{k}(k+1)\sqrt{\alpha}}{(1-\gamma)k!}.\]
Suppose that
\[\max\{|f_{G}(S)|:\ S \textrm{ is good}\} < (1-\psi)(n-k+1).\]
Observe that for any \(S \in \mathcal{K}_{k}(G)\), we have
\[f_{G}(S) \leq k(n-k+1),\]
since \(d(T) \leq n-k+1\) for each \(T \in S^{(k-1)}\). Hence,
\begin{align*}
\mathbb{E}f_{G} & <
\left(\left(1-\frac{k^{k}(k+1)\sqrt{\alpha}}{(1-\gamma)k!}\right)(1-\psi) +
\frac{k^{k}(k+1)\sqrt{\alpha}}{(1-\gamma)k!}k\right)(n-k+1)\\
 & \leq
\left(1-\psi+\frac{k^{k+1}(k+1)\sqrt{\alpha}}{(1-\gamma)k!}\right)(n-k+1),
\end{align*}
a contradiction if
\[\psi = \psi_{0} := 1-\frac{1-\gamma}{1+\beta} +
\frac{k^{k+1}(k+1)\sqrt{\alpha}}{(1-\gamma)k!} \leq
\gamma+\beta+\frac{2k^{k+1}(k+1)}{k!}\sqrt{\alpha}.\]
Let \(S \in \mathcal{K}_{k}(G)\) be a good \(k\)-set such that \(f_{G}(S) \geq
(1-\psi_{0})(n-k+1)\). Write \(S = \{v_{1},\ldots,v_{k}\}\), let \(T_{i} = S
\setminus \{v_{i}\}\) for each \(i\), and let \(N_i = N(T_i)\) for each \(i\).
Observe that \(N_i \cap N_j = N(S)\) for each \(i \neq j\), and \(|N(S)| = d(S)
\leq \sqrt{\alpha} (n-k)\). Let \(W_{i} = N_{i} \setminus N(S)\) for each
\(i\); observe that the \(W_i\)'s are pairwise disjoint. Let
\[R = V(G)\setminus \cup_{i=1}^{k} W_i\]
be the set of `leftover' vertices.

Observe that
\[\sum_{i=1}^{k}|N_i \setminus N(S)| = f_{G}(S) - kN(S) \geq (1-\psi) (n-k+1) -
k\sqrt{\alpha}(n-k),\]
and therefore the number of leftover vertices satisfies
\[|R| < (\psi +k \sqrt{\alpha})n+k.\]

We now produce a \(k\)-partition \((V_{i})_{i=1}^{k}\) of \(V(G)\) by extending
the partition \((W_i)_{i=1}^{k}\) of \(V(G) \setminus R\) arbitrarily to \(R\),
i.e., we partition the leftover vertices arbitrarily. Now delete all edges of
\(G\) within \(V_i\) for each \(i\). The number of edges within \(N_i\) is
precisely the number of \(K_{k+1}\)'s containing \(T_{i}\), which is at most
\(\sqrt{\alpha} {n -k+1 \choose 2}\). The number of edges incident with \(R\)
is trivially at most \((\psi +k \sqrt{\alpha})n(n-1)+k(n-1)\). Hence, the
number of edges deleted was at most
\begin{align*}
& (\psi +k \sqrt{\alpha})n(n-1)+k(n-1)+k\sqrt{\alpha} {n -k+1 \choose 2}\\
& \leq
\left(2\beta+2\gamma+\frac{8k^{k+1}(k+1)}{k!}\sqrt{\alpha}+2k/n\right){n
\choose 2}.
\end{align*}

Removing an edge removes at most \({n-2 \choose k-2}\) \(K_{k}\)'s, and
therefore the total number of \(K_{k}\)'s removed is at most
\begin{align*}
& \left(2\beta+2\gamma+\frac{8k^{k+1}(k+1)}{k!}\sqrt{\alpha}+2k/n\right){n
\choose 2}{n-2 \choose k-2}\\
&= \left(2\beta+2\gamma+\frac{8k^{k+1}(k+1)}{k!}\sqrt{\alpha}+2k/n\right){k
\choose 2}{n \choose k},
\end{align*}
completing the proof.
\end{proof}

Note that the two results above together imply the following
\begin{corollary}
For any \(k \in \mathbb{N}\), there exist constants \(A_k,B_k >0\) such that the following holds. For any \(\alpha \geq 0\), if \(G\) is an \(n\)-vertex graph with \(K_{k+1}\)-density at most \(\alpha\), and \(K_{k}\)-density at least
\[(1-\gamma)\frac{k!}{k^{k}},\]
where \(\gamma \leq 1/2\), then \(G\) can be made into a \(k\)-partite graph
\(G_{0}\) by removing at most
\[\left(2\gamma+A_{k}\alpha^{1/(k+2)}+B_k / n\right){n
\choose 2}\]
edges, which removes at most
\[\left(2\gamma+A_{k}\alpha^{1/(k+2)}+B_k / n\right){k
\choose 2}{n \choose k}\]
\(K_{k}\)'s.
\end{corollary}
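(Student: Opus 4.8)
The plan is to deduce this by feeding the bound of Lemma~\ref{lemma:upperbound} directly into Theorem~\ref{thm:makekpartite}: the point is that the hypothesis on the \(K_{k-1}\)-density in that theorem is not really an extra assumption, since a small \(K_{k+1}\)-density already forces the \(K_{k-1}\)-density to be close to that of \(T_k(n)\).

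First I would apply Lemma~\ref{lemma:upperbound} with \(r = k-1\). Since the product \(k(k-1)\cdots(k-r+1)\) equals \(k!\) when \(r = k-1\), this produces constants \(C,D>0\) (depending only on \(k\)) such that \(G\) has \(K_{k-1}\)-density at most
\[ \frac{k!}{k^{k-1}}\bigl(1 + C\alpha^{1/(k+2)} + D/n\bigr). \]
Thus \(G\) satisfies all the hypotheses of Theorem~\ref{thm:makekpartite} with \(\beta := C\alpha^{1/(k+2)} + D/n\), the same \(\alpha\), and the given \(\gamma \le 1/2\). Applying that theorem, \(G\) can be made \(k\)-partite by deleting at most
\[ \Bigl(2\beta + 2\gamma + \tfrac{8k^{k+1}(k+1)}{k!}\sqrt{\alpha} + 2k/n\Bigr)\binom{n}{2} \]
edges, destroying a correspondingly bounded number of \(K_k\)'s. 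Substituting the value of \(\beta\) turns the coefficient into
\[ 2\gamma + 2C\alpha^{1/(k+2)} + \tfrac{8k^{k+1}(k+1)}{k!}\sqrt{\alpha} + (2D+2k)/n. \]

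All that remains is bookkeeping: I would collect the two \(\alpha\)-terms into a single term \(A_k\alpha^{1/(k+2)}\) and the two \(1/n\)-terms into \(B_k/n\). The only mildly delicate point is that the error term supplied by Theorem~\ref{thm:makekpartite} is of order \(\sqrt{\alpha}\) whereas the one from Lemma~\ref{lemma:upperbound} is of order \(\alpha^{1/(k+2)}\); but for \(\alpha \le 1\) one has \(\sqrt{\alpha} \le \alpha^{1/(k+2)}\) since \(\tfrac12 \ge \tfrac1{k+2}\), so taking \(A_k := \max\{2C + \tfrac{8k^{k+1}(k+1)}{k!},\,1\}\) and \(B_k := 2D + 2k\) works in that range. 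For \(\alpha > 1\) the hypothesis on the \(K_{k+1}\)-density is vacuous, but then \(A_k\alpha^{1/(k+2)} \ge A_k \ge 1\), so the claimed edge bound exceeds \(\binom{n}{2}\) and one can simply delete every edge of \(G\); this makes \(G\) \(k\)-partite and destroys all at most \(\binom{n}{k} \le \binom{k}{2}\binom{n}{k}\) copies of \(K_k\) (valid for \(k \ge 2\); the corollary is trivial for \(k=1\), where the \(K_1\)-density is always \(1\)). There is no real obstacle here — the statement is a routine consequence of the two preceding results, with the matching of the \(\sqrt{\alpha}\) and \(\alpha^{1/(k+2)}\) error terms, and the disposal of the degenerate ranges \(\alpha \ge 1\) and \(k = 1\), being the only things to watch.
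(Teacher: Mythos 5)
Your argument is exactly the intended one: the paper itself just remarks that ``the two results above together imply'' the corollary, and you have filled in the bookkeeping correctly (applying Lemma~\ref{lemma:upperbound} with \(r=k-1\) to obtain the \(K_{k-1}\)-density hypothesis, then Theorem~\ref{thm:makekpartite}, then absorbing \(\sqrt{\alpha}\) into \(\alpha^{1/(k+2)}\) for \(\alpha\le 1\) and disposing of the degenerate ranges). Nothing to add.
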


\begin{proof}[Proof of Proposition \ref{prop:kstab}]
Suppose \(\mathcal{G} \subset \mathcal{P}X\) has \(|\mathcal{G}| =m \leq
(1+\eta)|\mathcal{F}_{n,k}|\), and \(k\)-generates at least
\((1-\epsilon)2^{n}\) subsets of \(X\). Our aim is to show that \(\mathcal{G}\) is close to a canonical \(k\)-generator. We may assume that \(\epsilon \leq 1/C_{k}^{k}\) and \(\eta \leq 1/D_{k}^{k}\), so by choosing \(C_k\) and \(D_k\) appropriately large, we may assume throughout that \(\epsilon\) and \(\eta\) are small. By choosing \(\xi_k\) appropriately small, we may assume that \(n \geq n_{0}(k)\), where \(n_{0}(k)\) is any function of \(k\).

We first apply Lemma \ref{lemma:upperbound} and Theorem \ref{thm:makekpartite} with \(G = H[\mathcal{G}]\), where
$H$ is the Kneser graph on $\mathcal{P}X$, \(\mathcal{G} \subset
\mathcal{P}X\) with \(|\mathcal{G}| =m \leq (1+\eta)|\mathcal{F}_{n,k}|\), and
\(\mathcal{G}\) \(k\)-generates at least \((1-\epsilon)2^{n}\) subsets of
\(X\). By (\ref{eq1}),
we have
\[d_{K_{k+1}}(H[\mathcal{G}]) \leq 2^{-a_k n},\]
and therefore we may take \(\alpha = 2^{-a_k n}\). Applying Lemma
\ref{lemma:upperbound} with \(r=k-1\), we may take \(\beta = 2^{-b_{k} n}\) for
some \(b_{k} > 0\).

We have \(|\mathcal{G}|=m \leq (1+\eta)(k2^{n/k}-k)\), so
\[{m \choose k} \leq \frac{m^{k}}{k!} < \frac{(1+\eta)^{k}k^{k}}{k!}2^n.\]
Notice that
\[\sum_{i=0}^{k-1}{m \choose i} \leq km^{k-1} \leq k ((1+\eta)k2^{n/k})^{k-1} <
(1+\eta)^{k-1}k^{k} 2^{(1-1/k)n}.\]
Since \(\mathcal{G}\) \(k\)-generates at least \((1-\epsilon)2^n\)
subsets of \(X\), we have
\[K_{k}(H[\mathcal{G}]) \geq (1-\epsilon)2^{n} -(1+\eta)^{k-1}k^{k}
2^{(1-1/k)n}.\]
Hence,
\begin{align*}
d_{K_{k}}(H[\mathcal{G}]) & = \frac{K_{k}(H[\mathcal{G}])}{{m \choose k}}\\
 & \geq \frac{(1-\epsilon)2^{n} -(1+\eta)^{k-1}k^{k}
2^{(1-1/k)n}}{{(1+\eta)k2^{n/k} \choose k}}\\
& \geq
\frac{1-\epsilon-(1+\eta)^{k-1}k^{k}2^{-n/k}}{(1+\eta)^k}\frac{k!}{k^k}\\
& \geq (1-\epsilon- k\eta - k^{k}2^{-n/k} )\frac{k!}{k^k},
\end{align*}
where the last inequality follows from
$$\frac{1-\epsilon}{(1+\eta)^k} \geq (1-\epsilon)(1-\eta)^k \geq
(1-\epsilon)(1-k\eta) \geq 1-\epsilon-k\eta.$$
Therefore, the $K_{k}$-density of $H[\mathcal{G}]$ satisfies
\[d_{K_{k}}(H[\mathcal{G}]) \geq (1-\gamma)\frac{k!}{k^k},\]
where
\[\gamma = \epsilon + k \eta +k^{k}2^{-n/k}.\]

Let
\[\psi =
\left(2\beta+2\gamma+\frac{8k^{k+1}(k+1)}{k!}\sqrt{\alpha}+2k/n\right){k
\choose 2}.\]
By Theorem \ref{thm:makekpartite}, there exists a \(k\)-partite subgraph
\(G_{0}\) of \(H[\mathcal{G}]\) with
\begin{align*}
K_{k}(G_{0}) & \geq K_{k}(H[\mathcal{G}])-\psi{m \choose k}\\
& \geq (1-\epsilon)2^{n} -(1+\eta)^{k-1}k^{k} 2^{(1-1/k)n} - \psi{m \choose
k}\\
& \geq \left(1-\epsilon-\frac{(1+\eta)^{k}k^{k}}{k!}\psi -
(1+\eta)^{k-1}k^{k}2^{-k/n} \right)2^{n}.
\end{align*}
Writing
\[\phi = \epsilon+\frac{(1+\eta)^{k}k^{k}}{k!}\psi +
(1+\eta)^{k-1}k^{k}2^{-k/n},\]
we have
\[K_{k}(G_{0}) \geq (1-\phi) 2^{n}.\]
Let \(V_{1},\ldots,V_{k}\) be the vertex-classes of \(G_{0}\). By the AM/GM
inequality,
\[K_{k}(G_{0}) \leq \prod_{i=1}^{k}|V_i| \leq
\left(\frac{\sum_{i=1}^{k}|V_i|}{k}\right)^k = (m/k)^{k},\]
and therefore
\begin{equation}
\label{eq2}
| \mathcal{G}|=m \geq k(K_{k}(G_{0}))^{1/k} \geq k(1-\phi)^{1/k}2^{n/k},
\end{equation}
recovering the asymptotic result of \cite{ellis}.

Moreover, any \(k\)-partite graph \(G_{0}\) satisfies
\[e(G_{0}) \geq {k \choose 2}(K_{k}(G_{0}))^{2/k}.\]
To see this, simply apply Shearer's Entropy Lemma with \(S = V(G_0)\), \(\mathcal{F}=\mathcal{K}_{k}(G_0)\), and
\(\mathcal{A} = \{V_i \cup V_j:\ i \neq j\}\). Then \(\mathcal{A}\) is a
\((k-1)\)-cover of \(V(G_0)\). Note that
\(\mathcal{F}_{V_i \cup V_j} \subset E_{G_0}(V_i,V_j)\), and therefore
\[(K_k(G_0))^{k-1} \leq \prod_{\{i,j\} \in [k]^{(2)}}e_{G_{0}}(V_i,V_j).\]
Applying the AM/GM inequality gives:
\[(K_k(G_0))^{k-1} \leq \prod_{\{i,j\}}e_{G_{0}}(V_i,V_j) \leq
\left(\frac{\sum_{\{i,j\}}e_{G_0}(V_i,V_j)}{{k \choose 2}}\right)^{k \choose
2} = \left(\frac{e(G_0)}{{k \choose 2}}\right)^{k \choose 2},\]
and therefore
\[e(G_{0}) \geq {k \choose 2}(K_{k}(G_{0}))^{2/k},\]
as required.

It follows that
\begin{align*}
e(G_{0}) & \geq {k \choose 2} (1-\phi)^{2/k}2^{2n/k}\\
& \geq {k \choose 2}(1-\phi)^{2/k} \left(\frac{m}{(1+\eta)k}\right)^{2}\\
& \geq (1-\eta)^2(1-\phi)^{2/k}(1-1/k) m^2/2\\
& \geq (1-2\eta-\phi^{2/k})(1-1/k) m^2/2\\
& = (1-\delta)(1-1/k) m^2/2,
\end{align*}
where \(\delta = 2\eta+\phi^{2/k}\).

Hence, \(G_0\) is a $k$-partite subgraph of $H[\mathcal{G}]$ with
$|G_0| = |\mathcal{G}|=m$, and $e(G_0) \geq (1-\delta-1/k)m^2/2$. Applying
Lemma \ref{lemma:highmindegree} to \(G_0\), we see that there exists
an induced subgraph \(H'\) of \(G_{0}\) with
\begin{equation}
 \label{eq:H'bound}
|H'| \geq (1-\sqrt{\delta})|\mathcal{G}|,
\end{equation}
and
\[\delta(H') \geq (1-1/k-\sqrt{\delta})(|H'|-1).\]

Let \(Y_{1},\ldots,Y_{k}\) be the vertex-classes
of \(H'\); note that these are families of subsets of \(X\). Clearly, for each
\(i \in [k]\),
\begin{equation}
\label{eq:Yibound}
| Y_i| \leq |H'|-\delta(H') \leq (1/k+\sqrt{\delta})|H'|+1.
\end{equation}
Hence, for each \(i \in [k]\),
\begin{equation}
 \label{eq:Yilowerbound}
| Y_i| \geq |H'| - (k-1)((1/k+\sqrt{\delta})|H'|+1) \geq (1/k-(k-1)
| \sqrt{\delta})|H'| - k+1.
\end{equation}
For each \(i \in [k]\), let
\[S_{i} = \bigcup_{y \in Y_{i}}y\]
be the union of all sets in \(Y_{i}\). We claim that the \(S_{i}\)'s are
pairwise disjoint. Suppose for a contradiction that \(S_{1} \cap S_{2} \neq
\emptyset\). Then there exist \(y_{1} \in Y_{1}\) and \(y_{2}\in Y_{2}\) which
both contain some element \(p \in X\). Since
\[\delta(H') \geq (1-1/k-\sqrt{\delta})(|H'|-1),\]
at least \((1-1/k-\sqrt{\delta})(|H'|-1)\) sets in \(\cup_{i \neq 1}Y_i\) do
not contain \(p\). By (\ref{eq:Yibound}),
\[|\cup_{i \neq 1}Y_i| = \sum_{i \neq 1}|Y_i| \leq
(1-1/k+(k-1)\sqrt{\delta})|H'|+k-1,\]
and therefore the number of sets in \(\cup_{i \neq 1}Y_i\) containing \(p\) is
at most
\[(1-1/k+(k-1)\sqrt{\delta})|H'|+k-1-(1-1/k-\sqrt{\delta})(|H'|-1) \leq
k\sqrt{\delta}|H'|+k.\]
The same holds for the number of sets in \(\cup_{i \neq 2}Y_i\) containing
\(p\), so the total number of sets in \(H'\) containing \(p\) is at most
\[2k\sqrt{\delta}|H'|+2k.\]

Hence, the total number of sets in \(\mathcal{G}\) containing \(p\) is at most
\[(2k+1)\sqrt{\delta}m+2k.\]
But then the number of ways of choosing at most \(k\) disjoint sets in
\(\mathcal{G}\) with one containing \(p\) is at most
\[(1+m^{k-1})((2k+1)\sqrt{\delta}m+2k) = O_{k}(\sqrt{\delta})2^{n} +
O_{k}(2^{(1-1/k)n}) < 2^{n-1}-\epsilon2^{n},\]
contradicting the fact that \(\mathcal{G}\) \(k\)-generates all but \(\epsilon
2^{n}\) of the sets containing \(p\).

Hence, we may conclude that the \(S_{i}\)'s are pairwise disjoint. By
definition, \(Y_{i} \subset \mathcal{P}S_i\), and therefore \(|Y_{i}| \leq
2^{|S_i|}\). But from (\ref{eq:Yilowerbound}),
\begin{align*}
| Y_{i}| & \geq (1-k(k-1) \sqrt{\delta})|H'|/k - k+1\\
& \geq (1-k(k-1) \sqrt{\delta})(1-\sqrt{\delta})|\mathcal{G}|/k - k+1\\
&\geq (1-k(k-1) \sqrt{\delta})(1-\sqrt{\delta})(1-\phi)^{1/k} 2^{n/k} - k+1\\
& \geq (1-(k(k-1)+1)\sqrt{\delta}-\phi^{1/k})2^{n/k}-k+1\\
& > (1-k^2\sqrt{\delta}-\phi^{1/k})2^{n/k}-k\\
& > 2^{n/k-1},
\end{align*}
using (\ref{eq:H'bound}) and (\ref{eq2}) for the second and third inequalities respectively. Hence, we must have \(|S_i| \geq n/k\) for each \(i\), and therefore \(|S_i| = n/k\) for each \(i\), i.e. \((S_i)_{i=1}^{k}\) is an equipartition of \(X\).
Putting everything together and recalling that $\delta=2\eta+\phi^{2/k}$ and
$\phi=O_k(\epsilon +\eta +2^{-c_kn})$, we have
\begin{align*}
|\mathcal{G} \cap (\cup_{i=1}^{k} \mathcal{P}S_i)| & \geq
 \sum_{i=1}^{k}|Y_i|\\
& \geq (1-k^2\sqrt{\delta}-\phi^{1/k})k2^{n/k}-k^2\\
& \geq (1-C_{k}\epsilon^{1/k} - D_{k} \eta^{1/k} - 2^{-\xi_{k}
n})k2^{n/k}
\end{align*}
(provided \(n\) is sufficiently large depending on \(k\)), where \(C_{k},D_{k},\xi_{k}>0\) depend only on \(k\). This proves Proposition
\ref{prop:kstab}.
\end{proof}
We now prove the following

\begin{proposition}
\label{prop:stab}
Let \(\nu(n) = o(1)\). If \(\mathcal{G}\) is a \(k\)-generator for \(X\) with
\(|\mathcal{G}| \leq |\mathcal{F}_{n,k}|\), and
\[|\mathcal{G} \cap \left(\cup_{i=1}^{k} \mathcal{P}S_i\right)| \geq
(1-\nu)|\mathcal{F}_{n,k}|,\]
where \((S_i)_{i=1}^{k}\) is a partition of \(X\) into \(k\) classes of sizes
as equal as possible, then provided \(n\) is
sufficiently large depending on \(k\), we have \(|\mathcal{G}| = |\mathcal{F}_{n,k}|\) and
\[\mathcal{G} = \cup_{i=1}^{k} \mathcal{P}S_i \setminus \{\emptyset\}.\]
\end{proposition}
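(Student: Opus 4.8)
The plan is to show that, under the hypotheses, $\mathcal{G}$ must equal $\bigcup_{i=1}^{k}\mathcal{P}S_i\setminus\{\emptyset\}$ exactly. Write $m=n/k$, let $\mathcal{G}_i=\mathcal{G}\cap(\mathcal{P}S_i\setminus\{\emptyset\})$, and let $\mathcal{B}=\mathcal{G}\setminus\bigcup_i\mathcal{P}S_i$ be the collection of ``crossing'' sets (those meeting at least two of the classes). Put $d_i=(2^m-1)-|\mathcal{G}_i|$ and $d=\sum_i d_i$. Since $\mathcal{G}=\mathcal{G}_1\sqcup\cdots\sqcup\mathcal{G}_k\sqcup\mathcal{B}$ (together with $\emptyset$, possibly), the bound $|\mathcal{G}|\le|\mathcal{F}_{n,k}|=k(2^m-1)$ forces $d\ge|\mathcal{B}|$ (and $d\ge 1$ if $\emptyset\in\mathcal{G}$), while the near-canonical hypothesis forces $d=o(2^m)$. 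It suffices to prove $d=0$: then $\mathcal{B}=\emptyset$, $\emptyset\notin\mathcal{G}$, and $\mathcal{G}_i=\mathcal{P}S_i\setminus\{\emptyset\}$ for every $i$, as required. Two easy observations are crucial. First, no subset of $S_i$ can be written as a disjoint union involving a crossing set (a crossing set is not contained in $S_i$), so each $\mathcal{G}_i$ is itself a $k$-generator for $S_i$; in particular every singleton $\{p\}$ lies in $\mathcal{G}$, since a singleton is not a disjoint union of two or more nonempty sets. Hence \emph{every} missing set (every member of $\bigl(\bigcup_i\mathcal{P}S_i\setminus\{\emptyset\}\bigr)\setminus\mathcal{G}$) has size at least $2$.

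Assume then that $d\ge 1$; I would argue for a contradiction by counting. Call $x\subseteq X$ \emph{transversal} if $x\cap S_i\ne\emptyset$ for all $i$. A transversal set is a disjoint union of at most $k$ members of $\mathcal{G}$ \emph{none of which is crossing} if and only if $x\cap S_i\in\mathcal{G}_i$ for every $i$ (with $k$ classes all met and at most $k$ pieces, there must be exactly one piece per class). So the number of transversal sets that cannot be generated in this way — and hence require a crossing set in every representation — is exactly $(2^m-1)^k-\prod_i(2^m-1-d_i)=(1-o(1))\,d\,2^{(k-1)m}$, using $d=o(2^m)$. All of these ``hard'' transversal sets must nonetheless be generated by $\mathcal{G}$ using at least one crossing set, so it remains to show that at most $(\tfrac12+o(1))\,d\,2^{(k-1)m}$ of them can be.

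To do this, fix for each hard transversal $x$ a representation $x=g_1\sqcup\cdots\sqcup g_\ell$ ($\ell\le k$) using a crossing set. A short count on the class–piece incidence pattern shows that either (i) some crossing $g_j=b$ is the \emph{only} piece meeting some class $j_0$, so that $x\cap S_{j_0}=b\cap S_{j_0}$ is determined by $b$; or (ii) no class is met by a single crossing piece, which forces at least two crossing pieces and at least two classes whose intersections with $x$ are determined by the crossing pieces alone. In case (ii), choosing the crossing pieces in at most $|\mathcal{B}|^2$ ways and the remaining $\le k-2$ coordinates of $x$ in at most $2^{(k-2)m}$ ways, the number of such $x$ is $O_k(|\mathcal{B}|^2 2^{(k-2)m})=o(d\,2^{(k-1)m})$. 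In case (i), let $i$ be a class with $x\cap S_i\notin\mathcal{G}_i$. If $S_i$ is met by two or more pieces, then $x\cap S_i$ is one of only $d_i$ missing sets, $x\cap S_{j_0}$ is determined, and the remaining $\le k-2$ coordinates are free, so such $x$ number $O_k(|\mathcal{B}|\,d\,2^{(k-2)m})=o(d\,2^{(k-1)m})$. Otherwise $S_i$ is met by the single crossing piece $b$ only, so $x\cap S_i=b\cap S_i$ is a missing set of size $\ge 2$; assigning such $x$ to the pair $(b,i)$ and noting $x\cap S_j\supseteq b\cap S_j$ for $j\ne i$, there are at most $2^{(k-1)m-|b\setminus S_i|}$ such $x$ for each $(b,i)$. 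Summing over $(b,i)$ and using that all ``missing parts'' of a fixed crossing $b$ have size $\ge 2$ — which makes $\sum_{i}2^{-|b\setminus S_i|}\le\tfrac12$, the extreme case being a single missing part of size exactly $1$, which is now excluded — this last contribution is at most $\tfrac12|\mathcal{B}|\,2^{(k-1)m}\le\tfrac12\,d\,2^{(k-1)m}$. Altogether, at most $(\tfrac12+o(1))\,d\,2^{(k-1)m}$ hard transversal sets are generated by $\mathcal{G}$, which for large $n$ is strictly less than the $(1-o(1))\,d\,2^{(k-1)m}$ of them that exist — a contradiction, so $d=0$. The main obstacle is the incidence-pattern bookkeeping needed in case (ii) and the two-piece subcase of (i), i.e.\ verifying that a crossing set, occupying at least two of the $k$ piece-slots' worth of classes, ``over-determines'' $x$ enough to make those contributions negligible; the clean gain is the factor $\tfrac12$, which comes precisely from the absence of singletons among the missing sets.
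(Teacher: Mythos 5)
Your approach is genuinely different from the paper's and, for $k\mid n$, appears sound: the observation that $\mathcal{G}_i$ must itself $k$-generate $\mathcal{P}S_i$ (because a crossing set can never appear in a decomposition of a subset of $S_i$), hence that every singleton lies in $\mathcal{G}$ and every missing set has size $\ge 2$, is correct and is exactly what makes $\sum_i 2^{-\lvert b\setminus S_i\rvert}\le\tfrac12$ hold for every crossing $b$; this is a nice structural fact the paper does not use. The paper instead isolates a single class $S_1$ maximising $\lvert\mathcal{F}_1\rvert$, considers the ``ray'' of sets of the form $y_1\sqcup s_2\sqcup\cdots\sqcup s_k$ with $y_1\in\mathcal{F}_1$, and by an iterative point-removal argument extracts $k+1$ distinct crossing sets per good $y_1$, concluding $\lvert\mathcal{E}\rvert\ge(k+1)(1-o(1))\lvert\mathcal{F}_1\rvert>k\lvert\mathcal{F}_1\rvert\ge\lvert\mathcal{E}\rvert$; your version counts all hard transversals simultaneously and wins by a factor $\tfrac12$ instead of $\tfrac{k}{k+1}$.

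There is however a genuine gap: you write $m=n/k$ and set $d_i=(2^m-1)-\lvert\mathcal{G}_i\rvert$, which silently assumes $k\mid n$. The proposition is stated for a partition into classes of sizes as equal as possible, and the paper explicitly needs the case $k\nmid n$ (it is invoked for $k=2$, $n$ odd in Section 3). When the $\lvert S_i\rvert$ take two values $\lfloor n/k\rfloor$ and $\lceil n/k\rceil$, the weights $2^{n-\lvert S_i\rvert}$ differ by a factor of $2$, and the inequality you need becomes $\sum_b\sum_i 2^{-\lvert b\setminus S_i\rvert}\cdot 2^{n-\lvert S_i\rvert}<(1-o(1))\sum_i d_i\,2^{n-\lvert S_i\rvert}$; the crude step ``$\sum_i 2^{-\lvert b\setminus S_i\rvert}\le\tfrac12$ per $b$, so the left side is $\le\tfrac12\lvert\mathcal{B}\rvert\cdot\max_i 2^{n-\lvert S_i\rvert}$'' is no longer comparable to the right side when the $d_i$ concentrate on the larger classes (where $2^{n-\lvert S_i\rvert}$ is smaller) but a few missing sets sit in a smaller class. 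In the worst case both sides equal $(\tfrac12+o(1))\,d\,2^{n-\lfloor n/k\rfloor}$ and the contradiction evaporates. So either a weighted version of the $\tfrac12$-inequality that respects the size imbalance must be proved, or one has to follow the paper's strategy of focusing on a single class, which is insensitive to the $\pm1$ variation in class sizes. Until that is resolved, your proof covers only the $k\mid n$ case of the proposition.
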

Note that \(n\) is no longer assumed to be a multiple of \(k\); the case \(k=2\) and \(n\) odd will be needed in Section \ref{sec:2case}.
\begin{proof}
Let \(\mathcal{G}\) and \((S_i)_{i=1}^{k}\) be as in the statement of the
proposition. For each \(i \in [k]\), let
\(\mathcal{F}_{i} = (\mathcal{P}S_{i} \setminus \{\emptyset\}) \setminus
\mathcal{G}\) be the collection of all nonempty subsets of \(S_{i}\) which are
not in \(\mathcal{G}\).
By our assumption on $\mathcal{G}$, we know that \(|\mathcal{F}_{i}| \leq
o(2^{|S_i|})\) for each \(i \in [k]\). Let
\[\mathcal{E} = \mathcal{G} \setminus \bigcup_{i=1}^{k} \mathcal{P}(S_{i})\]
be the collection of `extra' sets in \(\mathcal{G}\); let \(|\mathcal{E}| =
M\).

By relabeling the \(S_{i}\)'s, we may assume that \(|\mathcal{F}_{1}| \geq
| \mathcal{F}_{2}| \geq \cdots \geq |\mathcal{F}_{k}|\). By our assumption on
\(|\mathcal{G}|\), \(M \leq k|\mathcal{F}_{1}|\).

Let
\[\mathcal{R} = \{y_{1} \sqcup s_{2} \sqcup \cdots \sqcup s_{k}:\ y_{1} \in
\mathcal{F}_1,\ s_{i} \subset S_{i}\ \forall i \geq 2\};\]
observe that the sets \(y_{1} \sqcup s_{2} \sqcup \cdots \sqcup s_{k}\) are all
distinct, so \(|\mathcal{R}| = |\mathcal{F}_{1}| 2^{n-|S_1|}\). By considering
the number of sets in \(\mathcal{E}\) needed for \(\mathcal{G}\) to
\(k\)-generate \(\mathcal{R}\), we will show that \(M > k|\mathcal{F}_{1}|\)
unless \(\mathcal{F}_1 = \emptyset\). (In fact, our argument would also show
that \(M > p_k|\mathcal{F}_{1}|\) unless \(\mathcal{F}_1 = \emptyset\), for any
\(p_k >0\) depending only on \(k\).)

Let \(N\) be the number of sets in \(\mathcal{R}\) which may be expressed as a
disjoint union of two sets in \(\mathcal{E}\) and at most \(k-2\) other sets in
\(\mathcal{G}\). Then
\begin{align}
\label{eq:boundforbadsetsk}
N & \leq {M \choose 2} \sum_{i=0}^{k-2}{m \choose i} \nonumber \\
& \leq \tfrac{1}{2} k^{2}|\mathcal{F}_{1}|^{2} (k-1)
\frac{(c_0 k2^{n/k})^{k-2}}{(k-2)!} \nonumber \\
& \leq 4c_0^{k-2}k^k \left(\frac{|\mathcal{F}_{1}|}{2^{|S_1|}}\right)
|\mathcal{F}_1|2^{n-|S_1|}\nonumber \\
& = o(1)|\mathcal{F}_{1}|2^{n-|S_1|}\nonumber \\
& = o(|\mathcal{R}|),
\end{align}
where we have used \(|\mathcal{G}| \leq
|\mathcal{F}_{n,k}| \leq c_0 k 2^{n/k}\) (see (\ref{eq:crudebound})), \(|S_1| \leq \lceil n/k \rceil\), and $|\mathcal{F}_{1}|=o(2^{|S_1|})$ in the second, third and fourth lines respectively.

Now fix \(x_{1} \in \mathcal{F}_{1}\). For \(j \geq 1\), let
\(\mathcal{A}_{j}(x_{1})\) be the collection of \((k-1)\)-tuples
\((s_{2},\ldots,s_{k}) \in \mathcal{P}S_{2} \times \cdots \times
\mathcal{P}S_{k}\) such that
\[x_{1} \sqcup s_{2} \sqcup \cdots \sqcup s_{k}\]
may be expressed as a disjoint union
\[y_{1} \sqcup y_{2} \sqcup \cdots \sqcup y_{k}\]
with \(y_{j} \in \mathcal{E}\) but \(y_{i} \subset S_{i}\ \forall i \neq j\).
Let
\(\mathcal{A}^{*}(x_{1})\) be the collection of \((k-1)\)-tuples
\((s_{2},\ldots,s_{k}) \in \mathcal{P}S_{2} \times \cdots \times
\mathcal{P}S_{k}\) such that
\[x_{1} \sqcup s_{2} \sqcup \cdots \sqcup s_{k}\]
may be expressed as a disjoint union of two sets in \(\mathcal{E}\) and at most
\(k-2\) other sets in \(\mathcal{G}\).

Now fix \(j \neq 1\). For each \((s_{2},\ldots,s_{k}) \in
\mathcal{A}_{j}(x_1)\), we may write
\[x_{1} \sqcup s_{2} \sqcup \cdots \sqcup s_{k} = s_{1}' \sqcup s_{2} \sqcup
\cdots \sqcup s_{j-1} \sqcup y_{j} \sqcup s_{j+1} \sqcup \cdots \sqcup s_{k},\]
where \(y_{j} = s_{j} \sqcup (x_{1} \setminus s_{1}') \in \mathcal{E}\). Since
\(y_{j} \cap S_{j} = s_{j}\), different \(s_{j}\)'s correspond to different
\(y_{j}\)'s \(\in \mathcal{E}\), and so there are at most \(|\mathcal{E}| = M\)
choices for \(s_{j}\). Therefore,
\[|\mathcal{A}_{j}(x_{1})| \leq 2^{n-|S_1|-|S_j|}M \leq
2^{n-|S_1|-|S_j|}k|\mathcal{F}_{1}| \leq 2k
\left(\frac{|\mathcal{F}_1|}{2^{|S_1|}}\right) 2^{n-|S_1|},\]
the last inequality following from the fact that $|S_j| \geq |S_1|-1$. Hence,
\begin{equation}
\label{eq:sumbound}
 \sum_{j = 2}^{k}|\mathcal{A}_{j}(x_{1})| \leq
2k(k-1)\left(\frac{|\mathcal{F}_{1}|}{2^{|S_1|}}\right)2^{n-|S_1|} =
o(1)2^{n-|S_1|}.
\end{equation}

Observe that for each \(x_{1} \in \mathcal{F}_{1}\),
\[\mathcal{A}^{*}(x_{1}) \cup \bigcup_{j=1}^{k} \mathcal{A}_{j}(x_{1}) =
\mathcal{P}S_{2} \times \mathcal{P}S_{3} \times \cdots \times
\mathcal{P}S_{k},\]
and therefore
\[|\mathcal{A}^{*}(x_{1})|+|\mathcal{A}_{1}(x_{1})|+\sum_{j=2}^{k}|\mathcal{A}_{j}(x_{1})|
\geq 2^{n-|S_1|},\]
so by (\ref{eq:sumbound}),
\[|\mathcal{A}^{*}(x_{1})|+|\mathcal{A}_{1}(x_{1})| \geq
(1-o(1))2^{n-|S_1|}.\]

Call \(x_{1} \in \mathcal{F}_{1}\) `bad' if \(|\mathcal{A}^{*}(x_{1})| \geq
2^{-(k+2)} 2^{n-|S_1|}\); otherwise, call \(x_{1}\) `good'. By
(\ref{eq:boundforbadsetsk}), at most a \(o(1)\)-fraction of the sets in
\(\mathcal{F}_{1}\) are bad, so at least a \(1-o(1)\) fraction are good. For
each good set \(x_{1} \in \mathcal{F}_{1}\), notice that
\[|\mathcal{A}_{1}(x_{1})| \geq (1-2^{-(k+2)}-o(1))2^{n-|S_1|}.\]
Now perform the following process. Choose any \((s_{2},\ldots,s_{k}) \in
\mathcal{A}_{1}(x_{1})\); we may write
\[x_{1} \sqcup s_{2} \sqcup \cdots \sqcup s_{k} = z^{(1)} \sqcup s_{2}' \sqcup
\cdots \sqcup s_{k}'\]
with \((s_{2}',\ldots,s_{k}') \in \mathcal{P}S_{2} \times \cdots \times
\mathcal{P}S_{k}\), $ z^{(1)} \in \mathcal{E}$,  \(z^{(1)} \cap S_{1} =
x_{1}\), and \(z^{(1)} \setminus
S_{1} \neq \emptyset\). Pick \(p_{1} \in z^{(1)} \setminus S_{1}\). At most
\(\tfrac{1}{2} 2^{n-|S_1|}\) of the members of \(\mathcal{A}_{1}(x_{1})\) have
union containing \(p_{1}\), so there are at least
\[(1-\tfrac{1}{2}-2^{-(k+2)}-o(1))2^{n-|S_1|}\]
remaining members of \(\mathcal{A}_{1}(x_{1})\). Choose one of these,
\((t_{2},\ldots,t_{k})\) say. By definition, we may write
\[x_{1} \sqcup t_{2} \sqcup \cdots \sqcup t_{k} = z^{(2)} \sqcup t_{2}' \sqcup
\cdots \sqcup t_{k}'\]
with \((t_{2}',\ldots,t_{k}') \in \mathcal{P}S_{2} \times \cdots \times
\mathcal{P}S_{k}\), $ z^{(2)} \in \mathcal{E}$, \(z^{(2)} \cap S_{1} = x_{1}\),
and \(z^{(2)} \setminus
S_{1} \neq \emptyset\). Since \(p_{1} \notin z^{(2)}\), we must have
\(z^{(2)}\neq z^{(1)}\). Pick \(p_{2} \in z^{(2)} \setminus S_{1}\), and
repeat. At most \(\tfrac{3}{4} 2^{n-|S_1|}\) of the members of
\(\mathcal{A}_{1}(x_{1})\) have union containing \(p_{1}\) or \(p_{2}\); there
are at least
\[(\tfrac{1}{4}-2^{-(k+2)}-o(1))2^{n-|S_1|}\]
members remaining. Choose one of these, \((u_{2},\ldots,u_{k})\) say. By
definition, we may write
\[x_{1} \sqcup u_{2} \sqcup \cdots \sqcup u_{k} = z^{(3)} \sqcup u_{2}' \sqcup
\cdots \sqcup u_{k}'\]
with \((u_{2}',\ldots,u_{k}') \in \mathcal{P}S_{2} \times \cdots \times
\mathcal{P}S_{k}\), $ z^{(3)} \in \mathcal{E}$, \(z^{(3)} \cap S_{1} = x_{1}\),
and \(z^{(3)} \setminus
S_{1} \neq \emptyset\). Note that again $z^{(3)}$ is distinct from $z^{(1)},
z^{(2)}$, since $p_1, p_2 \not \in z^{(3)}$.
Continuing this process for \(k+1\) steps, we end up
with a collection of \(k+1\) distinct sets \(z^{(1)},\ldots,z^{(k+1)} \in
\mathcal{E}\) such that \(z^{(l)} \cap S_{1} = x_{1}\ \forall l \in [k+1]\). Do
this for each good set \(x_{1} \in \mathcal{F}_{1}\); the collections produced
are clearly pairwise disjoint. Therefore,
\[|\mathcal{E}| \geq (k+1)(1-o(1))|\mathcal{F}_{1}|.\]
This is a contradiction, unless \(\mathcal{F}_{1} = \emptyset\). Hence, we must have
\(\mathcal{F}_{2} = \cdots = \mathcal{F}_{k} = \emptyset\), and therefore
\[\mathcal{G} = \cup_{i=1}^{k} \mathcal{P}(S_{i}) \setminus \{\emptyset\},\]
proving Proposition \ref{prop:stab}, and completing the proof of Theorem
\ref{thm:kcase}.
\end{proof}

\section{The case \(k=2\) via bipartite subgraphs of \(H\).}
\label{sec:2case}
Our aim in this section is to prove the \(k=2\) case of Conjecture
\ref{conj:frein} for all sufficiently large {\em odd} \(n\), which together
with the \(k=2\) case of Theorem \ref{thm:kcase} will imply

\begin{2caserepeat}
If \(n\) is sufficiently large, \(X\) is an \(n\)-element set, and
\(\mathcal{G} \subset \mathcal{P}X\) is a 2-generator for \(X\), then
\(|\mathcal{G}| \geq |\mathcal{F}_{n,2}|\). Equality holds only if
\(\mathcal{G}\) is of the form \(\mathcal{F}_{n,2}\).
\end{2caserepeat}
Recall that
\[|\mathcal{F}_{n,2}| = \left\{\begin{array}{ll} 2 \cdot 2^{n/2}-2 & \textrm{if }n\textrm{ is even};\\
                                3 \cdot 2^{(n-1)/2}-2 & \textrm{if }n\textrm{ is odd}.\end{array}\right.\]

Suppose that \(X\) is an \(n\)-element set, and \(\mathcal{G} \subset
\mathcal{P}X\) is a 2-generator for \(X\) with \(|\mathcal{G}|=m \leq
| \mathcal{F}_{n,2}|\). The counting argument in the Introduction gives
\[1+m+{m \choose 2} \geq 2^n,\]
which implies that
\[|\mathcal{G}| \geq (1-o(1))\sqrt{2}2^{n/2}.\]
For \(n\) odd, we wish to improve this bound by a factor of approximately \(1.5\).

Our first aim is to prove that induced subgraphs of the Kneser graph \(H\) which have
order
\(\Omega(2^{n/2})\) are \(o(1)\)-close to being bipartite (Proposition
\ref{prop:makebipartite}).

Recall that a graph \(G = (V,E)\) is said to be \(\epsilon\)-{\em close to
being bipartite} if it can be made bipartite by the removal of at most
\(\epsilon |V|^2\) edges, and \(\epsilon\)-{\em far from being bipartite} if it
requires the removal of at least \(\epsilon |V|^2\) edges to make it bipartite.

Using Szemer\'edi's Regularity Lemma, Bollob\'as, Erd\H{o}s, Simonovits and
Szemer\'edi \cite{sz} proved the following.

\begin{theorem}[Bollob\'as, Erd\H{o}s, Simonovits, Szemer\'edi]
\label{thm:sze}
For any \(\epsilon >0\), there exists \(g(\epsilon) \in
\mathbb{N}\) depending on \(\epsilon\) alone such that for {\em any} graph
\(G\) which is \(\epsilon\)-far from being bipartite, the probability that a
uniform random induced subgraph of \(G\) of order \(g(\epsilon)\) is
non-bipartite is at least \(1/2\).
\end{theorem}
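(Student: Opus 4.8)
The plan is to invoke Szemer\'edi's Regularity Lemma and exploit a dichotomy for the \emph{cluster graph} of a regular partition of $G$: either it is bipartite, in which case a short edge-counting shows $G$ itself can be made bipartite by deleting fewer than $\epsilon n^2$ edges (contradicting the hypothesis), or it contains an odd cycle, which a counting/embedding argument promotes to $\Omega_\epsilon(n^{2\ell+1})$ copies of $C_{2\ell+1}$ in $G$ for some $\ell$ with $2\ell+1$ bounded in terms of $\epsilon$ alone. A routine amplification then converts the abundance of short odd cycles into the claimed probability bound.

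In detail, I would fix $\epsilon' = \epsilon/10$ and apply the Regularity Lemma to obtain an equitable partition $V_0,V_1,\dots,V_k$ with $1/\epsilon' \le k \le M(\epsilon')$, $|V_0| \le \epsilon' n$, and all but at most $\epsilon' k^2$ of the pairs $(V_i,V_j)$ being $\epsilon'$-regular. Let $R$ be the graph on $[k]$ with $ij \in E(R)$ iff $(V_i,V_j)$ is $\epsilon'$-regular of density at least $2\epsilon'$. If $R$ were bipartite with bipartition $A \cup B$, then deleting from $G$ all edges meeting $V_0$ (at most $\epsilon' n^2$), all edges inside a single cluster (at most $n^2/(2k) \le \epsilon' n^2/2$), all edges lying in irregular pairs (at most $\epsilon' n^2(1+o(1))$), and all edges lying in pairs of density $<2\epsilon'$ (at most $\epsilon' n^2(1+o(1))$) would leave a bipartite graph with parts $\bigcup_{i\in A}V_i$ and $\bigcup_{i\in B}V_i$: every surviving edge between $V_i$ and $V_j$ with $i,j$ in the same part has already been removed, since $ij \notin E(R)$ forces $(V_i,V_j)$ to be irregular or of density $<2\epsilon'$. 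As the total number of deleted edges is below $\epsilon n^2$ for $n$ large, this contradicts $G$ being $\epsilon$-far from bipartite. Hence $R$ contains a cycle $i_0 i_1 \cdots i_{2\ell} i_0$ of odd length $2\ell+1 \le k \le M(\epsilon')$; since consecutive pairs $(V_{i_j},V_{i_{j+1}})$ are $\epsilon'$-regular of density $\ge 2\epsilon'$, the standard counting lemma for regular pairs yields at least $c_0(\epsilon) n^{2\ell+1}$ copies of $C_{2\ell+1}$ in $G$, where $c_0(\epsilon)>0$ depends only on $\epsilon$ (through $\epsilon'$ and $M(\epsilon')$).

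Each copy of $C_{2\ell+1}$ is supported on its own set of $2\ell+1$ vertices, which therefore induces a non-bipartite subgraph, and each $(2\ell+1)$-set supports at most $(2\ell+1)!$ such copies; hence, once $n$ is large, at least a $c_1(\epsilon) := c_0(\epsilon)/2$ fraction of all $(2\ell+1)$-subsets of $V(G)$ induce non-bipartite subgraphs. To amplify, I would set $t = \lceil 8\ln 2 / c_1(\epsilon)\rceil$, put $g_1(\epsilon) = (2\ell+1)t$, and sample $g_1(\epsilon)$ vertices uniformly and independently \emph{with replacement}, split into $t$ blocks of size $2\ell+1$. The blocks are independent, and for $n$ large each is a $(2\ell+1)$-set inducing a non-bipartite subgraph with probability at least $c_1(\epsilon)/2$ (the extra $1/2$ absorbing the chance of a repeated vertex), so with probability at least $1-(1-c_1(\epsilon)/2)^t \ge 1/2$ some block does, and then the whole induced subgraph is non-bipartite. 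If the sampled multiset spans fewer than $g_1(\epsilon)$ distinct vertices, non-bipartiteness is only helped; and since the probability that a uniform random $s$-subset of $G$ induces a non-bipartite graph is non-decreasing in $s$ (non-bipartiteness is preserved under adding vertices), the same lower bound holds for a uniform random induced subgraph of order exactly $g_1(\epsilon)$. Finally I would take $g(\epsilon) = \max(g_1(\epsilon), n_0(\epsilon))$, where $n_0(\epsilon)$ is the threshold above which the estimates above hold: for $n < g(\epsilon)$ there are no induced subgraphs of order $g(\epsilon)$ (and if $n = g(\epsilon)$ the only one is $G$, which is non-bipartite), so the statement is trivial there, while for $n \ge g(\epsilon)$ the argument applies.

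The substantive work lies in the second paragraph: the regularity dichotomy and the counting lemma — in particular, verifying that the four families of deleted edges really cover every ``monochromatic'' edge and together number fewer than $\epsilon n^2$, and that the odd cycle in $R$ genuinely embeds $\Omega_\epsilon(n^{2\ell+1})$ times into $G$. The third-paragraph amplification is routine; its only mild nuisance is that disjoint pieces of a uniform-\emph{without}-replacement sample are not exactly independent, which is why I would sample with replacement and then recover the exact-size statement via the monotonicity of non-bipartiteness.
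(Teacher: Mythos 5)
The paper states this result as a citation (to Bollob\'as--Erd\H{o}s--Simonovits--Szemer\'edi) and does not reprove it, but it explicitly notes the original argument goes through Szemer\'edi's Regularity Lemma, which is exactly the route you take; your regularity dichotomy, odd-cycle counting, and amplification are the standard (and correct) way to establish the result.
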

Building on methods of Goldreich, Goldwasser and Ron \cite{goldreich}, Alon and
Krivelevich \cite{kriv} proved without using the Regularity Lemma that in fact,
one may take
\begin{equation}
 \label{eq:alonkriv}
g(\epsilon) \leq \frac{(\log(1/\epsilon))^{b}}{\epsilon}
\end{equation}
where \(b>0\) is an absolute constant. As observed in \cite{kriv}, this is
tight up to the poly-logarithmic factor, since necessarily,
\[g(\epsilon) \geq \frac{1}{6 \epsilon}.\]

We will first show that for any fixed \(c >0\) and \(l \in \mathbb{N}\), if
\(\mathcal{A} \subset \mathcal{P}X\) with \(|\mathcal{A}| \geq c2^{n/2}\), then
the density of \(C_{2l+1}\)'s in \(H[\mathcal{A}]\) is at most \(o(1)\). To
prove this, we will show that for any \(l \in \mathbb{N}\), there exists \(t
\in \mathbb{N}\) such that for any fixed \(c > 0\), if \(\mathcal{A} \subset
\mathcal{P}X\) with \(|\mathcal{A}| \geq c2^{n/2}\), then the homomorphism
density of \(C_{2l+1} \otimes t\) in \(H[\mathcal{A}]\) is \(o(1)\). Using
Lemma \ref{lemma:blowupcount}, we will deduce that the homomorphism density of
\(C_{2l+1}\) in \(H[\mathcal{A}]\) is \(o(1)\), implying that the density of
\(C_{2l+1}\)'s in \(H[\mathcal{A}]\) is \(o(1)\). This will show that
\(H[\mathcal{A}]\) is \(o(1)\)-close to being bipartite (Proposition \ref{prop:makebipartite}). To obtain a sharper
estimate for the \(o(1)\) term in Proposition \ref{prop:makebipartite}, we will use
(\ref{eq:alonkriv}), although to prove Theorem \ref{thm:2case}, any \(o(1)\) term would suffice, so one could in fact use Theorem \ref{thm:sze} instead of (\ref{eq:alonkriv}).

We are now ready to prove the following

\begin{proposition}
\label{prop:makebipartite}
Let \(c > 0\). Then there exists \(b >0\) such that for any \(\mathcal{A}
\subset \mathcal{P}X\) with \(|\mathcal{A}| \geq c2^{n/2}\), the induced
subgraph \(H[\mathcal{A}]\) can be made bipartite by removing at most
\[\frac{(\log_{2}\log_{2} n)^b}{\log_2 n}|\mathcal{A}|^2\]
edges.
\end{proposition}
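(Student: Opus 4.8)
<br>

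The plan is to follow the strategy outlined by the authors just before the proposition: show that for any fixed odd integer $2l+1$, the density of $C_{2l+1}$'s inside $H[\mathcal{A}]$ is $o(1)$ when $|\mathcal{A}| \geq c2^{n/2}$, and then invoke the Alon--Krivelevich bound (\ref{eq:alonkriv}) to convert "few odd cycles of some bounded length" into "close to bipartite". The cleanest route to the cycle bound is via homomorphism densities and blow-ups, exactly as in Section~2. First I would fix $l$ and observe that $C_{2l+1} \otimes t$, for suitable $t$, contains a structure whose homomorphism density in $H$ can be bounded by a union-bound / second-moment argument à la Lemma~\ref{lemma:homodens}: choosing $(2l+1)t$ random members of $\mathcal{A}$ with replacement and forming the $2l+1$ unions $U_1, \dots, U_{2l+1}$ of $t$ of them each, a homomorphic copy of $C_{2l+1}\otimes t$ forces consecutive $U_i$'s to be disjoint, and hence (walking around the odd cycle) some $U_i$ has size at most $n/2$; the probability of that is at most $2^n (2^{n/2}/|\mathcal{A}|)^t \leq 2^n(1/c)^t 2^{-nt/2}$, which is $2^{-\Omega(n)}$ once $t \geq 3$. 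So for $t = t_l$ fixed (any $t \geq 3$ works since $\delta$ here equals $0$ up to the $\log c$ term — one should take $t$ a bit larger to absorb the constant $c$, say $t = \lceil 3 + \log_2(1/c)\rceil$), we get $h_{C_{2l+1}\otimes t_l}(H[\mathcal{A}]) = 2^{-\Omega(n)}$.

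Next I would apply Lemma~\ref{lemma:blowupcount} in the reverse direction: since the homomorphism density of the blow-up $C_{2l+1}\otimes t_l$ is at least $(h_{C_{2l+1}}(H[\mathcal{A}]))^{t_l^{2l+1}}$, we deduce $h_{C_{2l+1}}(H[\mathcal{A}]) \leq 2^{-\Omega(n/t_l^{2l+1})} = 2^{-\Omega_l(n)}$, and then by (\ref{eq:denshomo}), $d_{C_{2l+1}}(H[\mathcal{A}]) \leq 2^{-\Omega_l(n)} + O_l(1/|\mathcal{A}|)$, which is certainly $o(1)$ (indeed exponentially small in $n$). Now set $m = |\mathcal{A}|$ and $G = H[\mathcal{A}]$, and suppose for contradiction that $G$ is $\epsilon$-far from bipartite, where $\epsilon := \frac{(\log_2\log_2 n)^b}{\log_2 n}$ for the absolute constant $b$ of (\ref{eq:alonkriv}) (enlarging $b$ if necessary). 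By Theorem~\ref{thm:sze} together with (\ref{eq:alonkriv}), a uniform random induced subgraph of $G$ on $g := g(\epsilon) \leq \frac{(\log(1/\epsilon))^b}{\epsilon} = O\!\left(\log_2 n \cdot (\log_2\log_2 n)^{O(1)}\right)$ vertices is non-bipartite with probability at least $1/2$; in particular it contains an odd cycle, necessarily of length $2l+1$ for some $l \leq (g-1)/2$. Hence $\sum_{l=1}^{(g-1)/2} \Pr[\text{random } g\text{-subset contains a } C_{2l+1}] \geq 1/2$, so for some $l \leq g/2$ this probability is at least $1/g$, and a random $g$-set containing a $C_{2l+1}$ gives (by restricting to a uniform $(2l+1)$-subset of it) $d_{C_{2l+1}}(G) \geq \frac{1}{g}\binom{g}{2l+1}^{-1} \geq g^{-g}$.

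The final step is a size comparison. We have $g = g(\epsilon)$ which grows like $\log_2 n$ up to polyloglog factors, so $g^{-g} = 2^{-O(\log_2 n \cdot \log_2\log_2 n)} = n^{-o(\log_2 n)}$, which is super-polynomially small but only quasi-polynomially so; crucially it is still much larger than the $2^{-\Omega_l(n)}$ bound we derived for $d_{C_{2l+1}}(G)$ — once $n$ is large enough that $2^{-\Omega_l(n)} < g^{-g}$ for every $l \leq g/2$ (and there are only $O(\log n)$ such $l$, each with its own constant in the exponent, so we just need $n$ large relative to the worst such constant, which is a function of $g$ and hence ultimately of $n$; here one must be a little careful, but the key point is that the exponent in $2^{-\Omega_l(n)}$ is $\Omega(n/t_l^{2l+1})$ with $2l+1 \leq g = O(\log n)$, so $t_l^{2l+1} = 2^{O(\log n \cdot \log\log n)} = n^{o(\log n)}$, hence $2^{-\Omega_l(n)} = 2^{-n^{1-o(1)}} \ll g^{-g} = 2^{-n^{o(1)}}$). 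This contradiction shows $G$ is $\epsilon$-close to bipartite, i.e. can be made bipartite by removing at most $\epsilon m^2 = \frac{(\log_2\log_2 n)^b}{\log_2 n}|\mathcal{A}|^2$ edges, as claimed.

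The main obstacle — and the point requiring genuine care rather than routine estimation — is the bookkeeping in this last paragraph: $g(\epsilon)$ depends on $n$ (through $\epsilon$), so the cycle length $2l+1$ we must rule out is itself growing with $n$, and the implied constant in $d_{C_{2l+1}}(G) \leq 2^{-\Omega_l(n)}$ depends on $l$ via $t_l^{2l+1}$. One has to check that this dependence is only quasi-polynomial in $n$ (since $2l+1 \leq g = O((\log n)(\log\log n)^{O(1)})$ and $t_l = O(\log(1/c))$ is actually bounded once $c$ is fixed — wait, $t_l$ need not grow with $l$ at all here since the $\delta$-gain is from $|\mathcal{A}| \geq c2^{n/2}$ being above the threshold $2^{n/(k+1)} = 2^{n/3}$, which is a constant-factor-in-the-exponent gain independent of $l$; so in fact $t_l$ can be taken an absolute constant $t_0$, and $t_0^{2l+1} \leq t_0^{g} = 2^{O(\log n \log\log n)}$). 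Thus $d_{C_{2l+1}}(G) \leq 2^{-n / 2^{O(\log n \log\log n)}} = 2^{-n^{1-o(1)}}$, which still crushes $g^{-g} = 2^{-O(\log n \log\log n)}$ for all large $n$. Making these inequalities precise, and choosing the constant $b$ in the statement large enough to absorb the polyloglog slack coming from (\ref{eq:alonkriv}), is the only delicate part; everything else is a direct reuse of Lemmas~\ref{lemma:blowupcount}, \ref{lemma:homodens} (in its $k=2$, odd-cycle analogue) and Theorem~\ref{thm:sze}.
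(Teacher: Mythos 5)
There is a genuine error at the heart of the cycle-density estimate, and it is exactly the spot where the proof needs an extra idea. You claim that consecutive disjointness of the $U_i$'s forces some $|U_i| \leq n/2$ (true: the indices with $|U_i| > n/2$ form an independent set in $C_{2l+1}$, which has independence number $l < 2l+1$), and then bound $\Pr[|U_i| \leq n/2] \leq 2^n (2^{n/2}/|\mathcal{A}|)^t \leq 2^n(1/c)^t 2^{-nt/2}$. The last inequality is an algebra slip: with $|\mathcal{A}| \geq c 2^{n/2}$ we get $2^{n/2}/|\mathcal{A}| \leq 1/c$, so the correct bound is $2^n c^{-t}$, with \emph{no} factor of $2^{-nt/2}$. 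That quantity is enormous, not $2^{-\Omega(n)}$, and indeed the threshold $n/2$ is too weak to give anything (take $\mathcal{A} = \mathcal{P}S$ with $|S|=n/2$: then $|U_i| \leq n/2$ always).

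The missing ingredient is the $\eta$-slack in the ``walk around the odd cycle'' argument. One needs the stronger claim that some $|U_i| \leq (1/2-\eta)n$ for a positive $\eta$: from consecutive disjointness, $|U_{i+2}\setminus U_i| \leq n - |U_i| - |U_{i+1}| \leq 2\eta n$ if all $|U_i| \geq (1/2-\eta)n$, and chaining over odd indices forces $U_1 \cap U_{2l+1} \neq \emptyset$ once $\eta < 1/(4l+2)$, a contradiction. With this threshold, $\Pr[|U_i| \leq (1/2-\eta)n] \leq 2^n\big(2^{(1/2-\eta)n}/m\big)^t \leq c^{-t} 2^{(1-\eta t)n}$, which decays only when $\eta t > 1$, i.e.\ $t > 4l+2$. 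So the blow-up parameter \emph{must} grow linearly in $l$ (the paper takes $\eta = 1/(8l)$, $t=16l$) --- directly contradicting your parenthetical assertion that ``$t_l$ can be taken an absolute constant $t_0$.'' You are conflating this with the $k$-generator case of Lemma~\ref{lemma:homodens}, where the fixed gap between $2^{n/k}$ and $2^{n/(k+1)}$ gives a $\delta$ bounded away from zero independently of the blow-up; here the threshold $(1/2-\eta)n$ must approach the critical $n/2$ as $l \to \infty$.

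Once this is repaired, your overall strategy survives. With $t_l = 16l$ and $2l+1 \leq g = O(\log n \cdot (\log\log n)^{O(1)})$, one has $t_l^{2l+1} \leq (8g)^g = 2^{O(g\log g)} = n^{(\log\log n)^{O(1)}}$, so $h_{C_{2l+1}}(H[\mathcal{A}]) \leq 2^{-\Omega(n/t_l^{2l+1})} = 2^{-n^{1-o(1)}}$, which still dwarfs $g^{-g} = 2^{-O(\log n \cdot \log\log n)}$, and your contradiction against the Alon--Krivelevich bound goes through. Your use of the contrapositive (assume $\epsilon$-far, derive a lower bound on some $d_{C_{2l+1}}$, contradict) is a minor repackaging of the paper's direct computation (bound the probability that a random $s$-subset of $\mathcal{A}$ induces an odd cycle, then apply (\ref{eq:alonkriv})); both are fine. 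But the error in the probability estimate is not a detail --- it is the reason the cycle length enters the exponent at all, and the proof cannot be completed without it.
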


\begin{proof}
Fix \(c > 0\); let \(\mathcal{A} \subset \mathcal{P}X\) with \(|\mathcal{A}| =
m \geq c2^{n/2}\). First, we show that for any fixed \(l \in \mathbb{N}\),
there exists \(t \in \mathbb{N}\) such that the homomorphism density of
\(C_{2l+1} \otimes t\)'s in \(H[\mathcal{A}]\) is at most \(o(1)\). The
argument is a strengthening of that used by Alon and Frankl to prove Lemma 4.2
in \cite{alon}.

Let \(t \in \mathbb{N}\) to be chosen later. Choose \((2l+1)t\) members of
\(\mathcal{A}\) uniformly at random with replacement, \((A_{i}^{(j)})_{1 \leq i
\leq 2l+1,\ 1 \leq j \leq t}\). The homomorphism density of \(C_{2l+1} \otimes
t\) in \(H[\mathcal{A}]\) is precisely the probability that the unions
\[U_{i} = \bigcup_{j=1}^{t}A_{i}^{(j)}\]
satisfy \(U_{i} \cap U_{i+1} = \emptyset\) for each \(i\) (where the addition
is
modulo \(2l+1\)).

We claim that if this occurs, then \(|U_{i}| < (\tfrac{1}{2} - \eta)n\) for
some \(i\), provided \(\eta < 1/(4l+2)\). Suppose for a contradiction that
\(U_{i} \cap U_{i+1} = \emptyset\) for each \(i\), and
\(|U_{i}| \geq (\tfrac{1}{2} - \eta)n\) for each \(i\). Then
$|U_{i+2} \setminus U_{i}| \leq n-|U_{i+1}|-|U_{i}| \leq 2\eta n$
for each \(i \in [2l-1]\). Since $U_{2l+1} \setminus U_{1} \subset \cup_{j=1}^l
(U_{2j+1}\setminus U_{2j-1})$, we have
$|U_{2l+1} \setminus U_{1}| \leq \sum_{j=1}^{l}|U_{2j+1}\setminus U_{2j-1}|
\leq 2l\eta
n$. It follows that \(|U_{1} \cap U_{2l+1}| \geq (1/2 - (2l+1)\eta)n > 0\) if
\(\eta < 1/(4l+2)\), a contradiction.

We now show that the probability of this event is very small. Fix \(i \in
[k]\). Observe that
\begin{align*}
\textrm{Pr}\{|U_{i}| \leq (1/2 - \eta)n\} & = \textrm{Pr}\left(\bigcup_{S
\subset X:|S| \leq (1/2 - \eta)n} \left(\bigcap_{j=1}^{t} \{A_{i}^{(j)} \subset
S\}\right)\right)\\
& \leq \sum_{|S| \leq (1/2 - \eta)n} \textrm{Pr}\left(\bigcap_{j=1}^{t}
\{A_{i}^{(j)} \subset S\}\right)\\
& = \sum_{|S| \leq (1/2 - \eta)n} (2^{|S|}/m)^{t}\\
& \leq 2^{n} \left(\frac{2^{(1/2 - \eta)n}}{c2^{n/2}}\right)^{t} \\
& = 2^{-(\eta t -1)n} c^{-t}\\
& \leq 2^{-n}c^{-t},
\end{align*}
provided \(t \geq 2/\eta\). Hence,
\[\textrm{Pr}\left(\bigcup_{i=1}^{2l+1} \{|U_{i}| \leq (1/2 - \eta)n\}\right)
\leq \sum_{i=1}^{2l+1} \textrm{Pr}\{|U_{i}| \leq (1/2 - \eta)n\} \leq
(2l+1)2^{-n}c^{-t}.\]
Therefore,
\[h_{C_{2l+1} \otimes t}(H[\mathcal{A}]) \leq (2l+1) 2^{-n}c^{-t}.\]
Choose \(\eta = \tfrac{1}{8l}\) and \(t = 2/\eta = 16l\). By Lemma
\ref{lemma:blowupcount},
\begin{align*}
h_{C_{2l+1}}(H[\mathcal{A}]) & \leq ((2l+1) 2^{-n}c^{-t})^{1/t^{2l+1}}\\
& = (2l+1)^{1/(16l)^{2l+1}} 2^{-n/(16l)^{2l+1}}c^{-1/(16l)^{2l}}\\
& = O(2^{-n/(16l)^{2l+1}}).
\end{align*}

Observe that the number of \((2s+1)\)-subsets of \(\mathcal{A}\) containing an
odd cycle of \(H\) is at most
\[\sum_{l=1}^{s} m^{2l+1} h_{C_{2l+1}}(H[\mathcal{A}]) {m-(2l+1) \choose
2(s-l)}.\]
Hence, the probability that a uniform random \((2s+1)\)-subset of
\(\mathcal{A}\) contains an odd cycle of \(H\) is at most
\begin{align*}
\sum_{l=1}^{s} \frac{m^{2l+1}}{m(m-1)\cdots (m-2l)} (2s+1)(2s)\cdots(2(s-l)+1)
h_{C_{2l+1}}(H[\mathcal{A}])\\
\leq s(2s+1)! O(2^{-n/(16s)^{2s+1}}),
\end{align*}
(provided \(s \leq O(\sqrt{m})\)). This can be made \(< 1/2\) by choosing
\[s = a \log_{2}n / \log_{2} \log_2 n,\]
for some suitable \(a > 0\) depending only on \(c\). By (\ref{eq:alonkriv}), it
follows that
\(H[\mathcal{A}]\) is \(((\log_{2}\log_{2} n)^b/\log_2 n)\)-close to being bipartite, for some suitable \(b > 0\) depending only on \(c\),
proving the
proposition.
\end{proof}

Before proving Theorem \ref{thm:2case} for \(n\) odd, we need some more definitions. Let \(X\) be a finite set. If \(A \subset \mathcal{P}X\), and \(i \in X\), we define
\begin{align*}
A_i^{-} & = \{x \in A:\ i \notin x\},\\
A_i^{+} & = \{x \setminus \{i\}:\ x \in A,\ i \in x\};
\end{align*}
these are respectively called the {\em lower} and {\em upper \(i\)-sections} of \(A\).

If \(Y\) and \(Z\) are disjoint subsets of
\(X\), we write \(H[Y,Z]\) for the bipartite subgraph of the Kneser graph \(H\)
consisting of all edges between \(Y\) and \(Z\). If \(B\) is a bipartite
subgraph of \(H\) with vertex-sets \(Y\) and \(Z\), and \(\mathcal{F} \subset
\mathcal{P}X\), we say that \(B\) 2-\textit{generates} \(\mathcal{F}\) if for
every set \(x \in \mathcal{F}\), there exist \(y \in Y\) and \(z \in Z\) such
that \(y \cap z = \emptyset\), \(yz \in E(B)\), and \(y \sqcup z = x\), i.e.
every set in \(\mathcal{F}\) corresponds to an edge of \(B\).

\begin{proof}[Proof of Theorem \ref{thm:2case} for \(n\) odd]
Suppose that \(n=2l+1 \geq 3\) is odd, \(X\) is an \(n\)-element set, and
\(\mathcal{G} \subset \mathcal{P}X\) is a 2-generator for \(X\) with
\(|\mathcal{G}| = m \leq |\mathcal{F}_{n,2}| = 3\cdot 2^{l}-2\). Observe that
\[e(H[\mathcal{G}]) \geq 2^{2l+1}-|\mathcal{G}|-1 \geq 2^{2l+1}-3\cdot 2^l +
1,\]
and therefore \(H[\mathcal{G}]\) has edge-density at least
\[\frac{2^{2l+1}-3\cdot 2^l + 1}{{|\mathcal{G}| \choose 2}} \geq
\frac{2^{2l+1}-3\cdot 2^l + 1}{\tfrac{1}{2}(3\cdot 2^{l}-2)(3 \cdot 2^{l}-3)} >
\tfrac{4}{9}.\]
(Here, the last inequality rearranges to the statement \(l> 0\).) By Proposition \ref{prop:makebipartite} applied to \(\mathcal{G}\), we can remove at
most
\[\frac{(\log_{2}\log_{2} n)^b}{\log_2 n}|\mathcal{G}|^2 <
\frac{(\log_{2}\log_{2} n)^b}{\log_2 n}9\cdot 2^{2l}\]
edges from \(H[\mathcal{G}]\) to produce a bipartite graph \(B\). Let
\(Y,Z\) be the vertex-classes of \(B\); we may assume that \(Y \sqcup Z =
\mathcal{G}\). Define \(\epsilon > 0\) by
\[|\{y \sqcup z:\ y \in Y,\ z \in Z,\ y \cap z = \emptyset\}| = (1-\epsilon)2^{2l+1};\]
then clearly, we have
\begin{equation}\label{eq:upperboundeb}
e(B) \geq (1-\epsilon)2^{2l+1}.
\end{equation}
Note that
\[\epsilon \leq \tfrac{9}{2} \frac{(\log_{2}\log_{2} n)^b}{\log_2 n}+3\cdot
2^{-(l+1)} = O\left(\frac{(\log_{2}\log_{2} n)^b}{\log_2 n}\right) = o(1).\]
Let
\[\alpha = |Y|/2^{l},\ \beta = |Z|/2^{l}.\]
By assumption, \(\alpha+\beta \leq 3-2^{-(l-1)} < 3\). Since \(|Y||Z| \geq e(B)
\geq (2-2\epsilon)2^{2l}\), we have \(\alpha \beta \geq 2-2\epsilon\). This implies that
\begin{equation}
\label{eq:2sidedbound}
1-2\epsilon < \alpha,\beta < 2+2\epsilon.
\end{equation}
(To see this, simply observe that to maximize \(\alpha \beta\) subject to the conditions \(\alpha \leq 1-2\epsilon\) and \(\alpha + \beta \leq 3\), it is best to take \(\alpha = 1-2\epsilon\) and \(\beta = 2+2\epsilon\), giving \(\alpha \beta = 2-2\epsilon-4\epsilon^2 < 2-2\epsilon\), a contradiction. It follows that we must have \(\alpha > 1-2\epsilon\), so \(\beta < 2+2\epsilon\); (\ref{eq:2sidedbound}) follows by symmetry.)

From now on, we think of \(X\) as the set \([n] = \{1,2,\ldots,n\}\). Let
\begin{align*}
W_{1} & = \{i \in [n]:\ |Y_{i}^{+}| \geq |Y|/3\},\\
W_{2} & = \{i \in [n]:\ |Z_{i}^{+}| \geq |Z|/3\}.
\end{align*}
First, we prove the following
\begin{claim}
\label{claim:1}
\(W_{1} \cup W_{2} = [n]\).
\end{claim}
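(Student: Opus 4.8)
The plan is to argue by contradiction: suppose there is some $i \in [n]$ lying in neither $W_1$ nor $W_2$, i.e. $|Y_i^{+}| < |Y|/3$ and $|Z_i^{+}| < |Z|/3$. The point is that the element $i$ is rarely used by the sets in $\mathcal{G}$, and this should contradict the fact that $B$ (almost) $2$-generates all $2^{2l+1}$ subsets of $X$ — in particular, it should fail to $2$-generate enough of the sets containing $i$. Concretely, every set $x \ni i$ generated by an edge $yz$ of $B$ must have $i$ in exactly one of $y,z$ (since $y \cap z = \emptyset$), so $x$ is of the form $(y' \sqcup\{i\}) \sqcup z$ with $y' \in Y_i^{+}$, $z \in Z_i^{-}$, or the symmetric form with the roles of $Y$ and $Z$ swapped. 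Hence the number of subsets of $X$ containing $i$ that can possibly be generated by $B$ is at most $|Y_i^{+}||Z_i^{-}| + |Z_i^{+}||Y_i^{-}| \leq |Y_i^{+}||Z| + |Z_i^{+}||Y|$.

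Using the hypotheses $|Y_i^{+}| < |Y|/3$ and $|Z_i^{+}| < |Z|/3$, this is strictly less than $\tfrac{2}{3}|Y||Z| \leq \tfrac{2}{3}\cdot(2+2\epsilon)^2 2^{2l} = (\tfrac{8}{3}+o(1))2^{2l}$, invoking the upper bounds $|Y|,|Z| < (2+2\epsilon)2^{l}$ from (\ref{eq:2sidedbound}). On the other hand, there are $2^{2l}$ subsets of $X$ containing $i$, and since $\mathcal{G}$ is an honest $2$-generator (not merely an almost-generator), all $2^{2l}$ of them must be generated — and in fact all but at most $\epsilon 2^{2l+1} = o(2^{2l})$ of them must be generated by the \emph{bipartite} graph $B$ (the ones lost are exactly the $\leq \epsilon 2^{2l+1}$ subsets of $X$ not of the form $y \sqcup z$ with $yz \in E(B)$). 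So $B$ must generate at least $(1 - o(1))2^{2l}$ of the sets containing $i$. Comparing $(1-o(1))2^{2l}$ with the upper bound $(\tfrac{8}{3}+o(1))2^{2l}$ would \emph{not} immediately give a contradiction, so the crude bound $|Y|,|Z| < (2+2\epsilon)2^l$ is too lossy; I would instead also use $|Y||Z| \leq \tfrac{1}{2}(|Y|+|Z|)^2/2 \cdot$ — better, use $|Y|+|Z| = m \le 3\cdot 2^l - 2$ together with $|Y_i^+| < |Y|/3$, $|Z_i^+| < |Z|/3$ to bound $|Y_i^+||Z| + |Z_i^+||Y| < \tfrac{1}{3}(|Y||Z| + |Z||Y|) = \tfrac{2}{3}|Y||Z|$, and then bound $|Y||Z| \le (m/2)^2 \le \tfrac{9}{4}2^{2l}(1+o(1))$, giving $< (\tfrac{3}{2}+o(1))2^{2l}$. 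That still exceeds $2^{2l}$.

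So the genuinely right bookkeeping, and the step I expect to be the main obstacle, is to not throw away the disjointness constraint: a set $x \ni i$ of the form $(y'\sqcup\{i\})\sqcup z$ requires $y' \subseteq X\setminus(\{i\}\cup z)$, so for a \emph{fixed} $z \in Z$ the number of valid $x$ is at most $\min(|Y_i^+|, 2^{n-1-|z|})$; summing more carefully, the number of sets containing $i$ generated from the $Y$-side is at most $\sum_{z \in Z} |\{y' \in Y_i^+ : y' \cap z = \emptyset\}|$, which is $e\big(H[Y_i^+ \sqcup\{i\},\, Z]\big)$ counted in $X \setminus \{i\}$. One then runs the counting/entropy-style estimate (as in the proof of Lemma \ref{lemma:homodens} or the $k$-generator argument): the family $Y_i^+$ together with $Z$ must $2$-generate $\ge (1-o(1))2^{2l}$ subsets of $[n]\setminus\{i\}$, forcing (by the trivial counting bound $|Y_i^+|\cdot|Z| \ge (1-o(1))2^{2l}$, hence $|Y_i^+| \ge (1-o(1))2^{2l}/|Z| \ge (1-o(1))2^{2l}/((2+2\epsilon)2^l) \ge (\tfrac12 - o(1))2^l$), and symmetrically from the $Z$-side; adding these and combining with $|Y|+|Z| \le 3\cdot 2^l - 2$ yields that $|Y_i^+|/|Y|$ and $|Z_i^+|/|Z|$ cannot both be below $1/3$ once $n$ is large. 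I would write this out via the two inequalities $|Y_i^+|+|Z_i^+| \ge (1-o(1))\cdot\tfrac{1}{|Z|}2^{2l} + \cdots$ and carefully chase constants to land strictly on the correct side of $1/3$; getting those constants to close is the one delicate point, and it is exactly why the hypotheses of (\ref{eq:2sidedbound}) were recorded in the sharp form $\alpha,\beta < 2+2\epsilon$ rather than merely $< 3$.
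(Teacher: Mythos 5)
Your diagnosis that the crude counting bound does not close is correct, but the ``refined bookkeeping'' you propose has a genuine gap and, more importantly, \emph{no} purely numerical bookkeeping can close this claim. First, the local gap: you assert that ``the family $Y_i^+$ together with $Z$ must $2$-generate $\geq (1-o(1))2^{2l}$ subsets of $[n]\setminus\{i\}$,'' but a set $x \ni i$ generated by an edge $yz$ of $B$ has $i$ in exactly one of $y,z$, so the generated sets split into a $Y$-side contribution (bounded by $e(H[Y_i^+, Z_i^-])$) and a $Z$-side contribution (bounded by $e(H[Z_i^+, Y_i^-])$); there is no reason the $Y$-side alone should account for $(1-o(1))2^{2l}$ of them, and in the extremal configuration each side contributes roughly half. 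Second, the deeper obstruction: the sharpest counting inequality one can write is $e_n \leq |Y_n^+||Z_n^-| + |Z_n^+||Y_n^-| = (\theta + \phi - 2\theta\phi)\alpha\beta\,2^{2l}$ with $\theta=|Y_n^+|/|Y|$, $\phi = |Z_n^+|/|Z|$, $\alpha = |Y|/2^l$, $\beta=|Z|/2^l$. Under the contradiction hypothesis $\theta,\phi \leq 1/3$, the factor $\theta+\phi-2\theta\phi$ is at most $4/9$, and $\alpha\beta \leq 9/4$ by AM/GM with $\alpha+\beta < 3$; the product is exactly $1$, matching the required lower bound $(1-2\epsilon)2^{2l}$ up to $o(1)$. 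So the configuration $\theta = \phi = 1/3$, $\alpha=\beta=3/2$ saturates every counting constraint, and chasing constants cannot ``land strictly on the correct side of $1/3$.''

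The paper's proof therefore has to do something structural after the counting step. It first deduces from the near-equality above that $\theta,\phi \to 1/3$ and $\alpha,\beta\to 3/2$, which pins down $|Y_n^\pm|,|Z_n^\pm|$ to within $o(2^l)$. It then observes that $\mathcal{G}_n^- = Y_n^- \cup Z_n^-$ must $2$-generate almost all of $\mathcal{P}([n]\setminus\{n\})$, and invokes Proposition~\ref{prop:kstab} (the $k=2$ stability result, for the even ground set $[n]\setminus\{n\}$) to produce an equipartition $S_1 \sqcup S_2$ with $Y_n^-$ essentially equal to $\mathcal{P}S_1$ and $Z_n^-$ essentially equal to $\mathcal{P}S_2$. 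A further degree argument shows $Y_n^+$ (resp.\ $Z_n^+$) is also almost contained in $\mathcal{P}S_1$ (resp.\ $\mathcal{P}S_2$), so only $o(2^l)$ sets in $\mathcal{G}$ meet both $S_1$ and $S_2$. Finally, it exhibits about $2^{2l-2}$ sets of the form $\{n\} \cup s_1 \cup s_2$ (with $s_1 \cup \{n\} \notin Y$, $s_2 \cup \{n\} \notin Z$) each of which can only be $2$-generated using a member of $\mathcal{G}$ meeting both $S_1$ and $S_2$, but such members can account for only $o(2^{2l})$ sets --- a genuine contradiction. This last step is precisely the ``missing idea'' that a counting argument, however carefully bookkept, cannot supply.
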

\begin{proof}
Suppose for a contradiction that \(W_{1} \cup W_{2} \neq [n]\). Without loss of generality, we may assume that \(n \notin
W_{1} \cup W_{2}\). Let
\[\theta= |Y_n^{+}|/|Y|,\ \phi = |Z_n^{+}|/|Z|;\]
then we have \(\theta,\phi \leq 1/3\). Observe that the number \(e_n\) of edges between \(Y\) and \(Z\) which generate
a set containing \(n\) satisfies
\begin{equation}
\label{eq:nedgebound}
(1-2\epsilon)2^{2l} \leq e_{n} \leq (\theta \alpha (1-\phi) \beta+ \phi
\beta(1-\theta)\alpha)2^{2l} = (\theta+\phi-2\theta\phi)\alpha \beta 2^{2l}.
\end{equation}
(Here, the left-hand inequality comes from the fact that \(B\) 2-generates
all but at most $\epsilon 2^{2l+1}$ subsets of $[n]$, and therefore $B$
\(2\)-generates
at least \((1-2\epsilon)2^{2l}\) sets containing \(n\).)

Notice that the function
\[f(\theta,\phi) = \theta+\phi-2\theta\phi,\ 0 \leq \theta,\phi \leq 1/3\]
is a strictly increasing function of both \(\theta\) and \(\phi\) for \(0 \leq
\theta,\phi \leq 1/3\), and therefore attains its maximum of \(4/9\) at
\(\theta=\phi=1/3\). Therefore,
\[1-2\epsilon \leq \tfrac{4}{9}\alpha \beta;\]
since \(\alpha + \beta \leq 3\), we have
\[3/2-3\sqrt{\epsilon/2} \leq \alpha,\beta \leq 3/2+3\sqrt{\epsilon/2}.\]
Moreover, by the AM/GM inequality, \(\alpha \beta \leq 9/4\), so
\begin{equation}
 \label{eq:flowerbound}
1-2\epsilon \leq \tfrac{9}{4}f(\theta,\phi),
\end{equation}
and therefore
\[1/3 - 8\epsilon/3 \leq \theta,\phi \leq 1/3.\]
Thus $|Y|, |Z|=(3/2-o(1))2^l$ and $\theta,\phi =1/3-o(1)$. Therefore, we have
\begin{align*}
|Y_n^{+}| & = 2^{l-1}(1-o(1)),\\
|Z_n^{+}| & = 2^{l-1}(1-o(1)),\\
|Y_{n}^-| & = 2^{l}(1+o(1)),\\
|Z_{n}^-| & = 2^{l}(1+o(1)).
\end{align*}
Observe that \(\mathcal{G}_{n}^{-} = Y_{n}^- \cup Z_{n}^{-}\) must \(2\)-generate
all but at most \(o(2^{2l})\) of the sets in \(\mathcal{P}\{1,2,\ldots,n-1\} =
\mathcal{P}\{1,2,\ldots,2l\}\),
and therefore, by Proposition \ref{prop:kstab} for \(k = 2\) and \(n\) even,
there exists an equipartition
\(S_{1} \cup S_{2}\) of \(\{1,2,\ldots,2l\}\) such that \(Y_{n}^{-}\) contains at least
\((1-o(1))2^{l}\) members of
\(\mathcal{P}S_{1}\), and \(Z_{n}^{-}\) contains at least \((1-o(1))2^{l}\) members of
\(\mathcal{P}S_{2}\). Define
\begin{align*}
U & = \{y \in Y:\ y \cap S_2 = \emptyset\},\\
V & = \{z \in Z:\ z \cap S_1 = \emptyset\}.
\end{align*}
Since \(|U_{n}^{-}| = (1-o(1))2^{l}\) and \(|V_{n}^{-}| = (1-o(1))2^{l}\), we must have \(|Y_{n}^{-} \setminus U_{n}^{-}| = o(2^{l})\), and \(|Z_{n}^{-} \setminus V_{n}^{-}| = o(2^{l})\). Our aim is now to show that \(|Y_{n}^{+} \setminus U_{n}^{+}| = o(2^l)\), and \(|Z_{n}^{+} \setminus V_{n}^{+}| = o(2^l)\).

Clearly, we have \(U_{n}^{-} \subset \mathcal{P}S_1\), and \(V_{n}^{-} \subset \mathcal{P}S_2\), so \(|U_{n}^{-}| \leq 2^{l}\) and \(|V_{n}^{-}| \leq 2^{l}\). Moreover, each set \(x \in Y_n^{+} \setminus U_n^{+}\) contains an element of \(S_2\), and therefore \(x \cup \{n\}\) is disjoint from at most \(2^{l-1}\) sets in \(V_{n}^{-} \subset \mathcal{P}S_2\). Similarly, each set \(x \in Z_n^{+} \setminus V_n^{+}\) contains an element of \(S_1\), and therefore \(x \cup \{n\}\) is disjoint from at most \(2^{l-1}\) sets in \(U_{n}^{-} \subset \mathcal{P}S_1\). It follows that
\begin{align*}
e_n \leq & |U_n^{+}| |V_{n}^{-}| + |Y_n^{+} \setminus U_{n}^{+}|2^{l-1} + |V_n^{+}| |U_{n}^{-}| + |Z_n^{+} \setminus V_{n}^{+}|2^{l-1}\\
& + |Y_{n}^{-} \setminus U_n^{-}| |Z_{n}^{+}| + |Z_{n}^{-} \setminus V_n^{-}||Y_{n}^{+}|\\
 \leq & |U_n^{+}| 2^{l} + |Y_n^{+} \setminus U_{n}^{+}|2^{l-1}+ |V_n^{+}| 2^{l} + |Z_n^{+} \setminus V_{n}^{+}|2^{l-1}+o(2^{2l}).
\end{align*}
On the other hand, by (\ref{eq:nedgebound}), we have \(e_n \geq (1-o(1))2^{2l}\). Since \(|Y_n^{+}| = 2^{l-1}(1-o(1))\), and \(|Z_n^{+}| = 2^{l-1}(1-o(1))\), we must have \(|Y_{n}^{+} \setminus U_{n}^{+}| = o(2^l)\), and \(|Z_{n}^{+} \setminus V_{n}^{+}| = o(2^l)\), as required.

We may conclude that \(|Y \setminus U| = o(2^l)\) and \(|Z \setminus V| = o(2^l)\). Hence, there are at most $o(2^l)$ sets in $Y \cup Z=\mathcal{G}$ that
intersect both $S_1$ and $S_2$. On the other hand, since \(|Y_n^{+}| = (1-o(1))2^{l-1}\) and \(|Z_{n}^{+}| = (1-o(1))2^{l-1}\), there are at least $(1+o(1))2^{l-1}$ sets $s_1 \subset S_1$ such that $s_{1}
\cup \{n\} \notin Y$, and there are at least $(1+o(1))2^{l-1}$ sets $s_2
\subset S_2$ such that $s_{2} \cup \{n\} \notin Z$.
Taking all pairs $s_1, s_2$ gives at least \((1+o(1))2^{2l-2}\)
sets of the form
\begin{equation}
\label{eq:form}
\{n\} \cup s_{1} \cup s_{2}\quad (s_1 \subset S_1,\ s_{1} \cup \{n\} \notin Y,\
s_2 \subset S_2,\ s_{2} \cup \{n\} \notin Z).
\end{equation}
Each of these requires a set intersecting both \(S_{1}\) and \(S_{2}\) to
express it
as a disjoint union of two sets from \(\mathcal{G}\).
Since there are \(o(2^{l})\) members of \(\mathcal{G}\) intersecting both
$S_{1}$
and $S_{2}$,
\(\mathcal{G}\) generates at most
\[(|\mathcal{G}|+1)o(2^l) = o(2^{2l})\]
sets of the form (\ref{eq:form}), a contradiction. This proves the claim.
\end{proof}

We now prove the following
\begin{claim}
\(W_{1} \cap W_{2} = \emptyset\).
\end{claim}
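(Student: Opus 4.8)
The plan is to show by a short edge count that the hypothesis $i\in W_1\cap W_2$ forces the pair $(Y,Z)$ into precisely the configuration already analysed in the proof of Claim~\ref{claim:1}, and then invoke the tail of that argument. Suppose for a contradiction that some $i\in W_1\cap W_2$; relabelling the ground set, we may assume $i=n$, and we set $\theta=|Y_n^{+}|/|Y|\geq 1/3$ and $\phi=|Z_n^{+}|/|Z|\geq 1/3$. Classify the edges of $B$ according to whether the set they generate contains $n$. An edge $yz$ of $B$ (so $y\in Y$, $z\in Z$, $y\cap z=\emptyset$) whose union contains $n$ has exactly one of $y,z$ containing $n$, so there are at most $\big(\theta(1-\phi)+(1-\theta)\phi\big)|Y||Z|=(\theta+\phi-2\theta\phi)\alpha\beta\,2^{2l}$ of these; an edge whose union avoids $n$ has $y\in Y_n^{-}$ and $z\in Z_n^{-}$, so there are at most $(1-\theta)(1-\phi)\alpha\beta\,2^{2l}$ of these. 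Adding these two bounds, the cross terms cancel and we obtain $e(B)\leq(1-\theta\phi)\alpha\beta\,2^{2l}$; combining this with $e(B)\geq(1-\epsilon)2^{2l+1}$ from (\ref{eq:upperboundeb}) gives
\[(1-\theta\phi)\,\alpha\beta\geq 2(1-\epsilon).\]

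Since $\alpha+\beta<3$, the AM/GM inequality gives $\alpha\beta<9/4$, and hence $1-\theta\phi>\tfrac{8}{9}(1-\epsilon)$, i.e. $\theta\phi<1/9+O(\epsilon)$. As $\theta,\phi\geq 1/3$, this forces $\theta=1/3+o(1)$ and $\phi=1/3+o(1)$. Feeding these values back into the displayed inequality gives $\alpha\beta=9/4-o(1)$, which together with $\alpha+\beta<3$ forces $\alpha=\beta=3/2+o(1)$. Finally, $e_n\geq(1-2\epsilon)2^{2l}=(1-o(1))2^{2l}$, since $B$ $2$-generates all but $\epsilon 2^{2l+1}$ of the subsets of $[n]$, and hence all but that many of the $2^{2l}$ subsets of $[n]$ containing $n$.

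We have now recovered exactly the intermediate situation reached partway through the proof of Claim~\ref{claim:1}: namely $|Y|,|Z|=(3/2+o(1))2^l$, $|Y_n^{+}|,|Z_n^{+}|=(1/2+o(1))2^l$, $|Y_n^{-}|,|Z_n^{-}|=(1+o(1))2^l$, and $e_n\geq(1-o(1))2^{2l}$. From here the argument is identical to that of Claim~\ref{claim:1} after this point: $Y_n^{-}\cup Z_n^{-}$ $2$-generates all but $o(2^{2l})$ of the subsets of $\{1,\dots,2l\}$, so Proposition~\ref{prop:kstab} (applied with $k=2$ to the even ground set $\{1,\dots,2l\}$) produces an equipartition $S_1\cup S_2$ of $\{1,\dots,2l\}$ with $Y_n^{-}$ lying almost entirely inside $\mathcal{P}S_1$ and $Z_n^{-}$ almost entirely inside $\mathcal{P}S_2$; defining $U$ and $V$ as there and re-examining the upper bound for $e_n$ shows $|Y\setminus U|,|Z\setminus V|=o(2^l)$, so only $o(2^l)$ members of $\mathcal{G}$ meet both $S_1$ and $S_2$; but there are $(1+o(1))2^{2l-2}$ sets of the shape $\{n\}\cup s_1\cup s_2$ (with $s_1\subset S_1$, $s_1\cup\{n\}\notin Y$, $s_2\subset S_2$, $s_2\cup\{n\}\notin Z$), each of which can only be $2$-generated using a set meeting both $S_1$ and $S_2$ — a contradiction, which proves the claim.

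The only genuinely new ingredient is the two-line edge count above, so I do not expect any real obstacle. The single point needing a moment's care is that the hypotheses here are $\theta,\phi\geq 1/3$, the reverse of the inequalities used in the proof of Claim~\ref{claim:1}, and one must confirm that they still pin $\theta$ and $\phi$ down to $1/3+o(1)$; they do, because $\theta\phi$ is squeezed between $1/9$ (from $\theta,\phi\geq 1/3$) and $1/9+o(1)$ (from the displayed inequality together with $\alpha\beta<9/4$). After that, the proof coincides verbatim with the tail of the proof of Claim~\ref{claim:1}.
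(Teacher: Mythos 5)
Your proof is correct and follows essentially the same route as the paper's: you derive the same key bound $e(B)\le(1-\theta\phi)\alpha\beta\,2^{2l}$, pin down $\theta,\phi=1/3+o(1)$ and $\alpha,\beta=3/2+o(1)$ using $\alpha+\beta<3$ and AM/GM, and then hand off to the tail of Claim~\ref{claim:1}. The only cosmetic difference is that the paper obtains the edge bound in one line by observing that no edge $yz$ can have $n\in y\cap z$, whereas you split the edges into those whose union contains $n$ and those whose union avoids it and add the two counts; both give the identical inequality.
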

\begin{proof}
Suppose for a contradiction that \(W_{1} \cap W_{2}\neq \emptyset\). Without loss of generality, we may assume that \(n
\in W_{1} \cap W_{2}\). As before, let
\[\theta= |Y_{n}^{+}|/|Y|,\ \phi= |Z_{n}^{+}|/|Z|;\]
this time, we have \(\theta,\phi \geq 1/3\). Observe that
\begin{equation}
\label{eq:edgebound}
(2-2\epsilon)2^{2l} \leq e(B) \leq (1-\theta \phi) \alpha \beta 2^{2l}.
\end{equation}
Here, the left-hand inequality is (\ref{eq:upperboundeb}), and the right-hand inequality comes from the fact that there are no edges between pairs of sets \((y,z) \in Y \times Z\) such that \(n \in y \cap z\). Since \(1-\theta \phi \leq 8/9\), we have
\[2-2\epsilon \leq \tfrac{8}{9}\alpha\beta.\]
Since \(\alpha+\beta \leq 3\), it follows that
\[\tfrac{3}{2}(1-\sqrt{\epsilon}) \leq \alpha,\beta \leq
\tfrac{3}{2}(1+\sqrt{\epsilon}).\]
Since \(\alpha \beta \leq 9/4\), we have
\[2-2\epsilon \leq \tfrac{9}{4}(1-\theta\phi),\]
and therefore
\[1/3 \leq \theta,\phi \leq 1/3+8\epsilon/3.\]
Hence, we have
\begin{align*}
|Y_n^{+}| & = 2^{l-1}(1-o(1)),\\
|Z_n^{+}| & = 2^{l-1}(1-o(1)),\\
|Y_{n}^-| & = 2^{l}(1+o(1)),\\
|Z_{n}^-| & = 2^{l}(1+o(1)),
\end{align*}
so exactly as in the proof of Claim \ref{claim:1}, we obtain a contradiction.
\end{proof}

Claims 1 and 2 together imply that \(W_{1} \cup W_{2}\) is a partition of
\(\{1,2,\ldots,n\}=\{1,2,\ldots,2l+1\}\). We will now show that at least a
\((2/3-o(1))\)-fraction of the sets in \(Y\)
are subsets of \(W_{1}\), and similarly at least a \((2/3-o(1))\)-fraction of
the sets in \(Z\) are subsets of \(W_{2}\). Let
$$\sigma = \frac{|Y\setminus \mathcal{P}(W_{1})|}{|Y|},\quad \quad \tau =
\frac{|Z \setminus \mathcal{P}(W_{2})|}{|Z|}\,.$$

Let \(y \in Y\setminus \mathcal{P}W_{1}\), and choose \(i \in y \cap W_{2}\);
since at least \(|Z|/3\) of the sets in \(Z\) contain \(i\), \(y\) has at most
\(2|Z|/3\) neighbours in \(Z\). Hence,
\begin{equation}
\label{eq:edgebound2}
(2-2\epsilon)2^{2l} \leq e(B)\leq (\tfrac{2}{3} \sigma \alpha \beta
+(1-\sigma)\alpha\beta) 2^{2l} = (1-\sigma/3)\alpha\beta 2^{2l} \leq
(1-\sigma/3)\tfrac{9}{4}2^{2l},
\end{equation}
and therefore
\[\sigma \leq 1/3 + 8\epsilon/3,\]
so
\begin{equation}
\label{eq3}
| Y \cap \mathcal{P}(W_{1})| \geq (2/3 - 8\epsilon/3)|Y|.
\end{equation}
Similarly, \(\tau \leq 1/3+8\epsilon/3\), and therefore $|Z \cap
\mathcal{P}(W_{2})| \geq (2/3 - 8\epsilon/3)|Z|$.

If \(|W_{1}| \leq l-1\), then \(|Y \cap \mathcal{P}(W_{1})| \leq 2^{l-1}\), so
\[|Y| \leq \frac{2^{l-1}}{2/3-8\epsilon/3} = \tfrac{3}{4}
\frac{2^l}{1-4\epsilon} < (1-2\epsilon)2^{l},\]
contradicting (\ref{eq:2sidedbound}). Hence, we must have \(|W_{1}| \geq l\).
Similarly, \(|W_{2}| \geq l\), so \(\{|W_{1}|,|W_{2}|\}=\{l,l+1\}\). Without
loss of generality, we may assume that \(|W_{1}| = l\) and \(|W_{2}|=l+1\).

We now observe that
\begin{equation}
\label{eq:Zbound}
| Z| \geq (3/2-6\epsilon)2^l
\end{equation}
To see this, suppose that \(|Z| = (3/2-\eta)2^l\). Since $|Z|+|Y|< 3 \cdot
2^l$, we have \(|Y| \leq (3/2+\eta)2^l\). Recall that any \(y \in Y \setminus
\mathcal{P}W_1\) has at most \(2|Z|/3\)
neighbours in \(Z\). Thus, we have
\begin{align*}
(2-2\epsilon)2^{2l} & \leq e(B)\\
& \leq |Y \cap \mathcal{P}W_1| |Z| + |Y \setminus \mathcal{P}W_1|
\tfrac{2}{3}|Z|\\
& \leq
2^l(\tfrac{3}{2}-\eta)2^l+(\tfrac{1}{2}+\eta)2^{l}\tfrac{2}{3}(\tfrac{3}{2}-\eta)2^l\\
& = (2-\tfrac{1}{3}\eta-\tfrac{2}{3}\eta^2)2^{2l}.
\end{align*}
Therefore $\eta \leq 6\epsilon$, i.e. $|Z| \geq (3/2-6\epsilon)2^{l}$, as
claimed. Since $|Z|+|Y|< 3 \cdot
2^l$, we have
\begin{equation}
\label{eq:Yupperbound}
|Y| \leq (3/2+6\epsilon)2^{l}.
\end{equation}

We now prove the following
\begin{claim}
\label{claim:3}
(a) \(|\mathcal{P}(W_{1}) \setminus Y| \leq 22\epsilon 2^l\);\\
(b) \(|Z \setminus
\mathcal{P}W_2| \leq (\sqrt{\epsilon}+2\epsilon)2^l\).
\end{claim}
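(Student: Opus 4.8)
The strategy for both parts is the same: assume the exceptional family exceeds the stated bound and contradict the fact, \eqref{eq:upperboundeb}, that $B$ $2$-generates all but at most $\epsilon 2^{2l+1}$ subsets of $[n]$. Write $Y'=Y\cap\mathcal{P}W_1$, $Y''=Y\setminus\mathcal{P}W_1$, $Z'=Z\cap\mathcal{P}W_2$, $Z''=Z\setminus\mathcal{P}W_2$. Since $[n]=W_1\sqcup W_2$, each $x\subseteq[n]$ factors uniquely as $x=(x\cap W_1)\sqcup(x\cap W_2)$, and in any realisation $x=y\sqcup z$ with $yz\in E(B)$ the sets $y\cap W_i$ and $z\cap W_i$ partition $x\cap W_i$. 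Two tools recur. First, a degree bound from the definitions of $W_1,W_2$: a set $z\in Z''$ meets $W_1$, hence contains some $i$ with $|Y_i^+|\ge|Y|/3$ and so has at most $\tfrac23|Y|$ neighbours in $Y$; dually a set in $Y''$ has at most $\tfrac23|Z|$ neighbours in $Z$. Second, a section count: for $i\in[n]$, all generated subsets of $[n]$ avoiding $i$ come from pairs inside $(Y_i^-,Z_i^-)$, so $|Y_i^-|\,|Z_i^-|\ge 2^{2l}-\epsilon 2^{2l+1}$, and all generated subsets containing $i$ force $|Y_i^+|\,|Z_i^-|+|Y_i^-|\,|Z_i^+|\ge 2^{2l}-\epsilon 2^{2l+1}$; fed with $|Y_i^+|\ge|Y|/3$ for $i\in W_1$ (and dually), together with the size bounds \eqref{eq:Yupperbound} and \eqref{eq:Zbound}, these pin the $i$-sections of $Y$ and $Z$ quite tightly, and in particular will be used to push $|Y|$ down towards $2^l$ and $|Z|$ up towards $2^{l+1}$.

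For part (a): given $w_1\subseteq W_1$ with $w_1\notin Y$, consider the family $\{w_1\sqcup w_2:w_2\in Z'\}$, of size $|Z'|\ge(1-O(\epsilon))2^l$ by \eqref{eq3} and \eqref{eq:Zbound}. If $w_1\sqcup w_2=y\sqcup z$ with $yz\in E(B)$ then, because $w_1\notin Y$, factoring along $W_1$ forces either $z\in Z''$ with $\varnothing\ne z\cap W_1\subseteq w_1$, or $z\in Z'$ and $y\in Y''$ with $y\cap W_1=w_1$. The number of generated sets $w_1\sqcup w_2$ (ranging over $w_1\in\mathcal{P}W_1\setminus Y$ and $w_2\in Z'$) coming from the first type is controlled by the degree bound on $Z''$-sets, and from the second type by $\sum_{w_1}|\{y\in Y'':y\cap W_1=w_1\}|=|Y''|$ together with the number of admissible $w_2$ per such $y$; using the tightened section estimates above to sharpen the crude values of $|Y''|$, $|Z''|$, $|Y|$ and $|Z|$ to their near-extremal sizes, one shows this total is $O(\epsilon)2^{2l}$. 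Hence if $|\mathcal{P}W_1\setminus Y|>22\epsilon 2^l$, the $|\mathcal{P}W_1\setminus Y|\cdot|Z'|$ sets of this shape cannot all be accounted for by generated sets plus the at most $\epsilon 2^{2l+1}$ genuine exceptions — a contradiction.

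For part (b): suppose $|Z''|=\gamma 2^l$ with $\gamma$ not small. Every $z\in Z''$ has at most $\tfrac23|Y|$ neighbours in $Y$, so $e(B)=\sum_{z\in Z}d_B(z)\le(|Z|-\tfrac13|Z''|)|Y|$; combining this with $e(B)\ge(2-2\epsilon)2^{2l}$, the improved size bounds, and $|Y|+|Z|<3\cdot 2^l$ yields a quadratic inequality for $\gamma$ of exactly the flavour producing \eqref{eq:2sidedbound} and \eqref{eq:Zbound} — optimising a product subject to a sum constraint — whose solution is $\gamma\le\sqrt{\epsilon}+2\epsilon$, the square root entering for the same reason as there. (If needed one first invokes part (a) to replace the degree bound $\tfrac23|Y|$ for $z\in Z''$ by $2^{l-1}+|Y''|$, which only tightens the constants.)

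The main obstacle is part (a). The naive estimate — that every $B$-edge meeting $Y''$ or $Z''$ contributes at most $|Y''|\,|Z|+|Z''|\,|Y|$ new sets — is hopelessly lossy, being consistent with $|\mathcal{P}W_1\setminus Y|$ being a constant fraction of $2^l$; so the argument must be organised column by column (fixing $w_1$ and controlling the generated part of its column) and must simultaneously exploit all the structural refinements: the $W_i$-degree bounds, the section estimates forcing $|Y|\approx 2^l$ and $|Z|\approx 2^{l+1}$, and the fact that $W_1\sqcup W_2$ very nearly partitions $[n]$ with $Y$ concentrated on $\mathcal{P}W_1$ and $Z$ on $\mathcal{P}W_2$. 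Only after the loss is whittled down to $O(\epsilon)2^{2l}$ do the stated constants emerge, and extracting the explicit values $22$ and $\sqrt{\epsilon}+2\epsilon$ is then routine arithmetic.
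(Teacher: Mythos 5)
Your strategy diverges substantially from the paper's, and I don't think it closes. The paper proves Claim~3 by a \emph{swapping} argument: it forms $B_1 = H[Y,Z']$ and $B_2=H[Y',Z']$ by trading $D$ members of $Z\setminus\mathcal{P}W_2$ for members of $\mathcal{P}W_2\setminus Z$ (and likewise $C$ members on the $Y$ side), shows each $Z$-side swap loses at most $\tfrac{8\epsilon}{3}|Y|$ edges while each $Y$-side swap \emph{gains} at least $(\tfrac12-10\epsilon)2^l$ edges, and compares with the absolute upper bound $e(B_2)\le(1+\epsilon)2^{2l+1}$. That forces $C\le 20\epsilon 2^l$, whence (a), and yields $e(B_1)-e(B)\le \epsilon 2^{2l+2}$. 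Part (b) then follows \emph{not} from a quadratic optimisation but from: if $D\ge\sqrt{\epsilon}2^l$ the swap-gain bound would force the existence of a high-degree set $z\in Z\setminus\mathcal{P}W_2$, and that feeds into the structural argument of Claim~1 (near-equal $n$-sections, equipartition $S_1\sqcup S_2$ of $[n-1]$, Proposition~\ref{prop:kstab}) to produce a contradiction. None of this appears in your sketch.

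Concretely, where your proposal breaks. For (a), your ``case (ii)'' count ($z\in Z\cap\mathcal{P}W_2$, $y\in Y\setminus\mathcal{P}W_1$, $y\cap W_1=w_1$) is bounded by $e_B(Y\setminus\mathcal{P}W_1,\,Z\cap\mathcal{P}W_2)$, which you need to be $O(\epsilon)2^{2l}$. But at this point one only knows $|Y\setminus\mathcal{P}W_1|\le(1/3+O(\epsilon))|Y|$ from~(\ref{eq3}); that permits $|Y\setminus\mathcal{P}W_1|=\Theta(2^l)$, hence $e_B(Y\setminus\mathcal{P}W_1,Z\cap\mathcal{P}W_2)=\Theta(2^{2l})$. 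The ``section estimates'' you invoke to push $|Y|$ toward $2^l$ and $|Z|$ toward $2^{l+1}$ are stated but not carried out, and in fact those sizes are only established \emph{after} Claims~3 and~4. For (b), the degree bound $d_B(z)\le\tfrac23|Y|$ for $z\in Z\setminus\mathcal{P}W_2$, fed into $e(B)\ge(2-2\epsilon)2^{2l}$, $|Y|+|Z|<3\cdot 2^l$ and AM/GM, gives $(\beta-\gamma/3)\alpha\ge 2-2\epsilon$ with $\alpha+\beta<3$, hence $(3/2-\gamma/6)^2\ge 2-2\epsilon$, i.e.\ $\gamma\lesssim 6(3/2-\sqrt 2)\approx 0.5$ — a constant, not $\sqrt{\epsilon}$. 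Even the sharpened degree bound $2^{l-1}+|Y\setminus\mathcal{P}W_1|$ only improves this to roughly $\gamma\lesssim 1/2$ unless $|Y||Z|$ is already known to be within $O(\epsilon)2^{2l}$ of $2^{2l+1}$, which again presupposes $|Y|\approx 2^l$. The $\sqrt{\epsilon}$ in the paper is not of ``near-maximum of a product'' type; it enters because $D\ge\sqrt\epsilon 2^l$ plus a swap-gain of $\Omega(\sqrt\epsilon)2^l$ per swap would overshoot the budget $\epsilon 2^{2l+2}$.
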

\begin{proof}
We prove this by constructing another bipartite subgraph \(B_2\) of \(H\) with
the same number of vertices as \(B\), and comparing \(e(B_2)\) with \(e(B)\).
First, let
\[D = \min\{| \mathcal{P}(W_{2}) \setminus Z|,|Z \setminus
\mathcal{P}W_{2}|\},\]
add \(D\) new members of \(\mathcal{P}(W_{2}) \setminus Z\) to \(Z\), and
delete \(D\) members of \(Z \setminus \mathcal{P}W_{2}\),
producing a new set \(Z'\) and a new bipartite graph \(B_{1} = H[Y,Z']\). Since
\(|Z'|=|Z| \leq (2+2\epsilon)2^{l}\), we have \(|Z' \setminus \mathcal{P}W_{2}|
\leq \epsilon 2^{l+1}\), i.e. \(Z'\) is
almost
contained within \(\mathcal{P}W_{2}\). Notice that every member \(z \in Z
\setminus \mathcal{P}W_{2}\) had at most \(2|Y|/3\) neighbours in \(Y\), and
every new member of \(Z'\) has at least \(|Y \cap
\mathcal{P}(W_{1})| \geq (2/3-8\epsilon/3)|Y|\) neighbours in \(Y\), using
(\ref{eq3}). Hence,
\[e(B)-e(B_1) \leq \tfrac{8\epsilon}{3}|Y|D \leq
\tfrac{8\epsilon}{3}|Y|\tfrac{2}{3}|Z|\leq \tfrac{16 \epsilon}{9}
\tfrac{9}{4}2^{2l} = 4\epsilon 2^{2l},\]
and therefore
\[e(B_1) \geq e(B)-2\epsilon 2^{2l+1} \geq (1-3\epsilon)2^{2l+1}.\]

Second, let
\[C = \min\{| \mathcal{P}W_{1} \setminus Y|,|Y \setminus \mathcal{P}W_{1}|\},\]
add \(C\) new members of \(\mathcal{P}(W_{1}) \setminus Y\) to \(Y\), and
delete
\(C\) members of \(Y \setminus \mathcal{P}W_{1}\), producing a new set \(Y'\)
and a new bipartite graph \(B_{2} = H[Y',Z']\). Since \(|Y| \geq
(1-2\epsilon)2^{l}\), we have \(|Y' \cap \mathcal{P}W_{1}|
\geq (1-2\epsilon)2^{l}\). Since every deleted member of \(Y\) contained an
element of \(W_{2}\), it had at most \((1+2\epsilon)2^{l}\) neighbours in
\(Z'\). (Indeed, such member of $Y$ intersects $2^l$ sets in
$\mathcal{P}W_{2}$, so has at most \(2^l\) neighbours in \(Z' \cap
\mathcal{P}W_2\); there are \(|Z' \setminus
\mathcal{P}W_{2}| \leq \epsilon 2^{l+1}\) other sets in \(Z'\).) On the other
hand, every new member of \(Y'\) is joined to
all of \(Z' \cap \mathcal{P}W_{2}\), which has size at least \(|Z\cap
\mathcal{P}W_{2}| \geq (3/2-8\epsilon)2^{l}\). It follows that
\begin{equation}
\label{eq:lowerboundeB2}
e(B_{2}) \geq e(B_{1})+C(\tfrac{1}{2}-10\epsilon)2^{l} \geq
(1-3\epsilon)2^{2l+1}+C(\tfrac{1}{2}-10\epsilon)2^{l}.
\end{equation}

We now show that \(e(B_2) \leq (1+\epsilon)2^{2l+1}\). If \(|Y'| \geq 2^l\),
then write \(|Y'| = (1+\phi)2^{l}\) where \(\phi \geq
0\); \(Y'\) contains
all of \(\mathcal{P}W_1\), and \(\phi 2^l\) `extra' sets. We have \(|Z'| \leq
(2-\phi)2^l\), and therefore by (\ref{eq:Zbound}), \(\phi \leq 1/2+6\epsilon
<1\). Note that every `extra' set in \(Y'
\setminus \mathcal{P}W_1\) has at most \(2^{l}\) neighbors in
\(\mathcal{P}W_2\), and therefore at most \((1+2\epsilon)2^l\) neighbours in
\(Z'\). Hence,
\[e(B_{2}) \leq 2^l(2-\phi)2^l + \phi 2^l (1+2\epsilon)2^l =
(1+\phi\epsilon)2^{2l+1} \leq (1+\epsilon)2^{2l+1}.\]

If, on the other hand, \(|Y'| \leq 2^{l}\), then since \(|Y'|+|Z'| \leq 3\cdot
2^l\), we have $e(B_{2}) \leq |Y'||Z'| \leq 2^{2l+1}$.
Hence, we always have
\begin{equation}
\label{eq:upperboundeB2}
e(B_2) \leq (1+\epsilon)2^{2l+1}.
\end{equation}
Combining (\ref{eq:lowerboundeB2}) and (\ref{eq:upperboundeB2}), we see that
\[C \leq \frac{8\epsilon}{1/2-10\epsilon}2^l \leq 20\epsilon 2^l,\]
provided \(\epsilon \leq 1/100\).

This implies \textit{(a)}. Indeed, if $|\mathcal{P}W_{1}
\setminus Y| \leq C\leq 20\epsilon 2^l$, then we are done. Otherwise, by the
definition of \(C\), we have $|Y \setminus \mathcal{P}W_{1}| \leq
20\epsilon 2^l$. Recall that by (\ref{eq:2sidedbound}), $|Y| \geq
(1-2\epsilon)2^l$, and therefore
$$|Y \cap \mathcal{P}W_{1}|=|Y|-|Y \setminus \mathcal{P}W_{1}|\geq
(1-2\epsilon)2^l-20\epsilon 2^l=(1-22\epsilon)2^l.$$
Hence,
\begin{equation}
 \label{eq:Ycontains}
|\mathcal{P}(W_1) \setminus Y| \leq 22\epsilon 2^l,
\end{equation}
proving \textit{(a)}.

Since \(e(B) \geq (1-\epsilon)2^{2l+1}\), \(e(B_2) \leq (1+\epsilon)2^{2l+1}\),
and \(e(B_2) \geq e(B_1)\), we have
\begin{equation}
\label{eq:fewedges}
 e(B_1)-e(B) \leq e(B_2)-e(B) \leq (1+\epsilon)2^{2l+1}-(1-\epsilon)2^{2l+1} =
\epsilon2^{2l+2}
\end{equation}
We now use this to show that
\[D = \min\{| \mathcal{P}(W_{2}) \setminus Z|,|Z \setminus \mathcal{P}W_{2}|\}
\leq \sqrt{\epsilon} 2^{l}.\]

Suppose for a contradiction that \(D \geq \sqrt{\epsilon} 2^{l}\); then it is
easy to see that there must exist \(z \in Z \setminus \mathcal{P}W_{2}\) with
at least
\[2|Y|/3-8 \sqrt{\epsilon}2^l\]
neighbours in \(Y\). Indeed, suppose that every \(z \in Z \setminus
\mathcal{P}W_{2}\) has less than
$2|Y|/3-8 \sqrt{\epsilon}2^l$ neighbors in
$Y$. Recall that every new member of $Z'$ has at least $(2/3-8\epsilon)|Y|$
neighbours in $Y$. Hence,
\[e(B_1) - e(B) >  8D(\sqrt{\epsilon}-\epsilon)|Y| \geq 8 \sqrt{\epsilon}
2^{l}(\sqrt{\epsilon}-\epsilon)(1-2\epsilon)2^l \geq \epsilon 2^{2l+1}\]
since \(\epsilon < 1/16\), contradicting (\ref{eq:fewedges}).

Hence, we may choose \(z \in Z \setminus \mathcal{P}W_{2}\) with at least
\[2|Y|/3-8 \sqrt{\epsilon}2^l\]
neighbours in \(Y\). Without loss of generality, we may assume that \(n \in z
\cap W_1\); then none of these neighbours can
contain \(n\). Hence, \(Y\) contains at most
\[|Y|/3+8 \sqrt{\epsilon}2^l\]
sets containing \(n\). But by (\ref{eq:Ycontains}), \(Y\) contains at least
$(1-44\epsilon)2^{l-1}$
of the subsets of \(W_{1}\) that contain \(n\), and therefore \(|Y| \geq
(3/2-o(1))2^l\). By (\ref{eq:Zbound}), it follows that \(|Y| = (3/2-o(1))2^l\)
and \(|Z| = (3/2+o(1))2^l\), so \(Y\) contains
\((1-o(1))2^{l-1}\) sets containing \(n\). Hence, by (\ref{eq:flowerbound}), so
does \(Z\). As in the proof of Claim \ref{claim:1}, we obtain a contradiction.
This implies that
\[D = \min\{|\mathcal{P}(W_{2}) \setminus Z|,|Z \setminus \mathcal{P}W_{2}|\}
\leq
\sqrt{\epsilon} 2^{l},\]
as desired.

This implies \textit{(b)}. Indeed, if \(|Z \setminus \mathcal{P}W_2| \leq \sqrt{\epsilon} 2^{l}\), then we are
done. Otherwise, by the definition of \(D\), \(|\mathcal{P}(W_{2}) \setminus Z|
\leq \sqrt{\epsilon} 2^{l}\), and therefore
\[|Z \cap \mathcal{P}W_2| \geq (2-\sqrt{\epsilon})2^l.\]
Since $|Z| \leq (2+2\epsilon)2^l$, we have
$$|Z \setminus \mathcal{P}W_{2}|=|Z|-|Z\cap \mathcal{P}W_{2}| \leq
(2+2\epsilon)2^l - (2-\sqrt{\epsilon})2^l = (\sqrt{\epsilon}+2\epsilon)2^l,$$
proving \textit{(b)}.
\end{proof}

We conclude by proving the following
\begin{claim}
\[|\mathcal{P}(W_2) \setminus Z| \leq 4\sqrt{\epsilon}2^l.\]
\end{claim}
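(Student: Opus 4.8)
The plan is to show that $|\mathcal P(W_2)\setminus Z|>4\sqrt\epsilon\,2^l$ would contradict $\mathcal G$ being a $2$-generator for $X$. First I would reduce to a convenient case: by Claim 3(b) the quantity $\min\{|\mathcal P(W_2)\setminus Z|,\,|Z\setminus\mathcal P(W_2)|\}$ is at most $\sqrt\epsilon\,2^l$, so if $|\mathcal P(W_2)\setminus Z|\le|Z\setminus\mathcal P(W_2)|$ we are done. Hence assume $|Z\setminus\mathcal P(W_2)|\le\sqrt\epsilon\,2^l$, which gives $|\mathcal P(W_2)\setminus Z|\le 2^{l+1}-|Z|+\sqrt\epsilon\,2^l$; so it suffices to prove $|Z|\ge(2-3\sqrt\epsilon)2^l$, and I suppose for contradiction that $\beta:=|Z|/2^l<2-3\sqrt\epsilon$.

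Next I would run a bipartite-surgery argument in the spirit of Claim 3. Replace $Y$ by $Y^*=\mathcal P(W_1)\setminus\{\emptyset\}$: delete from $Y$ the $|Y\setminus\mathcal P(W_1)|$ sets meeting $W_2$ (each of which, containing some $i\in W_2$, has at most $\tfrac23|Z|$ neighbours in $Z$ by the definition of $W_2$), add the at most $22\epsilon\,2^l$ subsets of $W_1$ missing from $Y$ (each disjoint from all of $Z\cap\mathcal P(W_2)$), and move the sets thus freed, together with $Z\setminus\mathcal P(W_2)$, into $Z$ as fresh subsets of $W_2$ (each disjoint from all of $Y^*$). The new bipartite graph $B^*=H[Y^*,Z^*]$ has $Y^*\subseteq\mathcal P(W_1)$ and $Z^*\subseteq\mathcal P(W_2)$, hence is complete bipartite, so $e(B^*)=(2^l-1)\,|Z^*|=(2^l-1)(|\mathcal G|-2^l+1)$. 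Tracking the edge changes, using $e(B)\ge(1-\epsilon)2^{2l+1}$, $|Z\setminus\mathcal P(W_2)|\le\sqrt\epsilon\,2^l$, $|\mathcal P(W_1)\setminus Y|\le 22\epsilon\,2^l$, and observing that the loss from the deleted sets is essentially cancelled when $|Z|\le\tfrac32 2^l$ (while otherwise $|Y\setminus\mathcal P(W_1)|$ is correspondingly small), one gets $e(B^*)\ge e(B)-c\,2^{2l}$ for a small absolute constant $c$. Comparing this with $e(B^*)=(2^l-1)(|\mathcal G|-2^l+1)$ forces $|\mathcal G|\ge(3-c')2^l$ for a small constant $c'$, so (since $|\mathcal G|\le 3\cdot2^l-2$) $|\mathcal G|$ is near-maximal; combining with $\alpha+\beta<3$, $\alpha\beta\ge 2-2\epsilon$ and $\beta\ge\tfrac32-6\epsilon$ (where $\alpha=|Y|/2^l$) confines $(\alpha,\beta)$ to a narrow region.

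I would then split into two regimes. In the balanced regime $\beta=\tfrac32+o(1)$ (hence $\alpha=\tfrac32+o(1)$): here $|Y|,|Z|=(\tfrac32+o(1))2^l$, and the edge inequalities, together with $|W_1|=l$, $|W_2|=l+1$ and the structure of $Y,Z$ already established, pin down some element $i$ with $|Y_i^+|,|Z_i^+|=(1+o(1))2^{l-1}$ (exactly the step appearing in the proofs of Claims 1 and 3(b)); then $\mathcal G_i^-=Y_i^-\cup Z_i^-$ has size $(2+o(1))2^l$ and $2$-generates all but $o(2^{2l})$ subsets of $[n]\setminus\{i\}$, a set of even size $2l$, so Proposition~\ref{prop:kstab} (with $k=2$) yields an equipartition $S_1\sqcup S_2$ of $[n]\setminus\{i\}$ that $\mathcal G_i^-$ almost realises canonically; arguing exactly as at the end of the proof of Claim 1 — only $o(2^l)$ sets of $\mathcal G$ meet both $S_1$ and $S_2$, yet the $\Omega(2^{2l})$ sets $\{i\}\cup s_1\cup s_2$ ($s_1\subseteq S_1$, $s_2\subseteq S_2$) each need such a set — gives a contradiction. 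In the remaining regime ($\beta$ bounded away from $\tfrac32$ but still $<2-3\sqrt\epsilon$) I would analyse $\mathcal N^*=\mathcal P(W_2)\setminus\mathcal G$: for every nonempty $q\in\mathcal N^*$ and every $j\in W_1$, the mixed set $q\cup\{j\}$ forces (as its $W_2$-part $q$ is not in $\mathcal G$) a set of $\mathcal G$ meeting both $W_1$ and $W_2$ with $W_1$-part exactly $\{j\}$ (either equal to $q\cup\{j\}$, or of the form $\{j\}\cup t$ with $\emptyset\ne t\subseteq q$ and $q\setminus t\in\mathcal G$); pushing this through should show $\mathcal G$ must contain more sets meeting both parts than the identity $|\mathcal G|-|\mathcal G\cap\mathcal P(W_1)|-|\mathcal G\cap\mathcal P(W_2)|+1\le|\mathcal N^*|+22\epsilon\,2^l$ permits, unless $|\mathcal N^*|$ is $o(2^l)$ — in which case $\mathcal G$ is within $o(2^l)$ of a canonical $2$-generator, Proposition~\ref{prop:stab} applies to give $\mathcal G=\mathcal P(W_1)\cup\mathcal P(W_2)\setminus\{\emptyset\}$, and then $|\mathcal P(W_2)\setminus Z|$ is in fact $o(2^l)$; either way this contradicts $|\mathcal P(W_2)\setminus Z|>4\sqrt\epsilon\,2^l$.

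The main obstacle is this last regime: bounding the generating power of the sets meeting both $W_1$ and $W_2$ tightly enough to eliminate every $\beta$ strictly between $\tfrac32$ and $2-3\sqrt\epsilon$ requires keeping track of the sizes of the $W_2$-parts of those sets and of how the exceptional pairs $(q,j)$ are distributed, since a set with a small $W_2$-part is a very efficient generator; the balanced regime, by contrast, reduces cleanly to the machinery already developed for Claims 1 and 3(b).
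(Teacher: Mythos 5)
Your approach departs substantially from the paper's and, as you yourself note in the final paragraph, contains a gap that you do not close.

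Your plan is to first establish $|Z|\geq(2-3\sqrt\epsilon)2^l$ and then deduce the claim from $|Z\setminus\mathcal P W_2|\leq\sqrt\epsilon\,2^l$. The problem is that at this stage of the argument $|Z|$ is \emph{not} forced to be near $2\cdot 2^l$: all the constraints so far established — $|W_1|=l$, $|W_2|=l+1$, $|Y\cap\mathcal P W_1|\geq(1-22\epsilon)2^l$, $|Z\setminus\mathcal P W_2|\leq(\sqrt\epsilon+2\epsilon)2^l$, $|Z|\geq(3/2-6\epsilon)2^l$, $e(B)\geq(1-\epsilon)2^{2l+1}$, $|\mathcal G|<3\cdot 2^l$ — are all consistent with $|Z|\approx\tfrac32\,2^l$. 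For instance, $|Y\cap\mathcal P W_1|=2^l$, $|Y\setminus\mathcal P W_1|=\tfrac12 2^l$, $|Z|=\tfrac32 2^l\subset\mathcal P W_2$ gives $e(B)\leq 2^l\cdot\tfrac32 2^l+\tfrac12 2^l\cdot\tfrac23\cdot\tfrac32 2^l=2\cdot 2^{2l}$, saturating the edge bound. Your ``bipartite surgery'' in the second paragraph would at best give $|\mathcal G|\geq(3-c')2^l$ for a constant $c'$ that is not small (tracking the worst case of the loss term $|Y\setminus\mathcal P W_1|\cdot(\tfrac23|Z|-2^l)$ gives a constant on the order of $1/24$, not $O(\sqrt\epsilon)$), and even if you had $|\mathcal G|=(3-o(1))2^l$ this still allows $|Y|\approx|Z|\approx\tfrac32 2^l$. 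You then correctly identify that a case split is needed, but the ``remaining regime'' argument via mixed sets is only sketched, and you flag it as the main obstacle; I agree it is not clear how to make it work.

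The paper avoids all of this by arguing directly. It considers the $|\mathcal F_2|\cdot 2^l$ sets of the form $w_1\sqcup f_2$ with $w_1\subset W_1$, $f_2\in\mathcal F_2=\mathcal P W_2\setminus Z$. Any such set that is realised by a ``good'' edge of $B$ (one not meeting $Z\setminus\mathcal P W_2$) must be written as $y\sqcup z$ with $z\in Z\cap\mathcal P W_2$, hence $z\subsetneq f_2$ (properly, since $f_2\notin Z$), forcing $y\in\mathcal E_1=Y\setminus\mathcal P W_1$ with $y\cap W_1=w_1$; so for fixed $f_2$, the good edges account for at most $|\mathcal E_1|\leq(\tfrac12+28\epsilon)2^l$ of the $2^l$ values of $w_1$. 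Summing the at-least-$(\tfrac12-28\epsilon)2^l$ remaining sets over all $f_2\in\mathcal F_2$ and bounding them by the number of bad edges, $(\sqrt\epsilon+2\epsilon)2^l|Y|$, plus the $\epsilon 2^{2l+1}$ sets not generated by $B$, gives $|\mathcal F_2|\leq 4\sqrt\epsilon\,2^l$ in one step, with no regime split and no need to first pin down $|Z|$. (Indeed, $|Z|=(2-o(1))2^l$ is then \emph{deduced} from this claim, rather than used to prove it — your strategy reverses the logical direction in a way that makes the argument harder, not easier.) I would recommend you redo the proof along these lines: the counting argument that exposes why a missing $W_2$-part $f_2$ forces an ``extra'' set in $Y$ for each $w_1$ is the crux.
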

\begin{proof}
Let
\[\mathcal{F}_{2}=\mathcal{P}(W_{2}) \setminus Z\]
be the collection of sets in \(\mathcal{P}W_{2}\) which are missing from \(Z\),
and let
\[\mathcal{E}_{1} = Y \setminus \mathcal{P}W_{1}\]
be the set of `extra' members of \(Y\).

Since
\(\mathcal{G}\) is a 2-generator for \(X\), we can express all
\(|\mathcal{F}_{2}|2^l\) sets of the form
\[w_{1} \sqcup f_{2}\ (w_{1} \subset W_{1},f_{2} \in \mathcal{F}_{2})\]
as a disjoint union of two sets in \(\mathcal{G}\). All but at most
\(\epsilon 2^{2l+1}\) of these unions correspond to edges of \(B\). Since \(|Z
\setminus
\mathcal{P}W_{2}| \leq (\sqrt{\epsilon}+2\epsilon)2^{l}\), there are at most
\((\sqrt{\epsilon}+2\epsilon)2^{l}|Y|\) edges of \(B\) meeting sets in \(Z
\setminus
\mathcal{P}W_{2}\). Call these edges of \(B\) `bad', and the rest of the edges
of \(B\) `good'. Fix \(f_{2} \in \mathcal{F}_{2}\); we can express all
\(2^{l}\) sets
of the form
\[w_{1} \sqcup f_{2}\ (w_{1} \subset W_{1})\]
as a disjoint union of two sets in \(\mathcal{G}\). If \(w_{1} \sqcup f_{2}\)
is represented by a good edge, then we may write
\[w_{1} \sqcup f_{2} = y_{1} \sqcup w_{2}\]
where \(y_{1} \in \mathcal{E}_{1}\) with \(y_{1} \cap W_{1}=w_{1}\), and \(w_2
\subset W_2\), so for every such \(w_{1}\), there is a different \(y_{1} \in
\mathcal{E}_{1}\).
By (\ref{eq:Yupperbound}), $|Y| \leq (3/2+6\epsilon) 2^l$, and by (\ref{eq:Ycontains}), $|Y \cap
\mathcal{P}W_1| \geq (1-22\epsilon)2^l$, so
\[|\mathcal{E}_{1}| =|Y|-|\mathcal{P}(W_1) \cap Y| \leq (3/2+6\epsilon) 2^l -(1-22
\epsilon)2^l=(1/2+28\epsilon)2^{l}.\]
Thus, for any \(f_{2} \in \mathcal{F}_{2}\), at most \((1/2+28\epsilon)2^{l}\) unions
of the form \(w_{1} \sqcup f_{2}\) correspond to good edges of \(B\). All the
other
unions are generated by bad edges of \(B\) or are not generated by \(B\) at
all, so
\[(1/2-28\epsilon)2^{l}|\mathcal{F}_{2}| \leq
(2\epsilon+\sqrt{\epsilon})2^{l}|Y|+\epsilon2^{2l+1}.\]
Since $|Y| \leq (3/2+6\epsilon) 2^l$ and $\epsilon$ is small,
\(|\mathcal{F}_{2}| \leq 4\sqrt{\epsilon}2^l\), as required.
\end{proof}

We now know that \(Y\) contains all but at most \(o(2^{l})\) of
\(\mathcal{P}W_{1}\), and \(Z\) contains all but at most \(o(2^{l})\) of
\(\mathcal{P}W_{2}\).
Since $|Y|+|Z| <3\cdot 2^l$, we may conclude that \(|Y| = (1-o(1))2^{l}\) and \(|Z|
=
(2-o(1))2^{l}\).
It follows from Proposition \ref{prop:stab} that provided \(n\) is sufficiently
large, we must have \(\mathcal{G} = \mathcal{P}(W_{1}) \cup \mathcal{P}(W_{2})
\setminus \{\emptyset\}\), completing the proof of
Theorem \ref{thm:2case}.
\end{proof}
\section{Conclusion}
We have been unable to prove Conjecture \ref{conj:frein} for \(k \geq 3\) and
{\em all} sufficiently large \(n\). Recall that if \(\mathcal{G}\) is a
\(k\)-generator for an \(n\)-element set \(X\), then
\[|\mathcal{G}| \geq 2^{n/k}.\]
In view of Proposition \ref{prop:makebipartite}, it is natural to ask whether
for any fixed \(k\), all induced subgraphs of the Kneser graph \(H\) with
\(\Omega(2^{n/k})\) vertices can be made \(k\)-partite by removing at most
\(o(2^{2n/k})\) edges. This is false for \(k=3\), however, as the following
example shows. Let \(n\) be a multiple of 6, and take an equipartition of
\([n]\) into 6 sets \(T_{1},\ldots,T_{6}\) of size \(n/6\). Let
\[\mathcal{A} = \bigcup_{\{i,j\} \in [6]^{(2)}} (T_{i} \cup T_{j});\]
then \(|\mathcal{A}| = 15(2^{n/3})\), and \(H[\mathcal{A}]\) contains a
\(2^{n/3}\)-blow-up of the Kneser graph \(K(6,2)\), which has chromatic number
4. It is easy to see that \(H[\mathcal{A}]\) requires the removal of at least
\(2^{2n/3}\) edges to make it tripartite. Hence, a different argument to that
in Section 3 will be required.

We believe Conjecture \ref{conj:frein} to be true for all \(n\) and \(k\), but
it would seem that different techniques will be required to prove this.

David Ellis\\
\texttt{D.Ellis@dpmms.cam.ac.uk}\\
\\
Benny Sudakov\\
\texttt{b.sudakov@math.ucla.edu}
\end{document}